%%%%%%%% ICML 2025 EXAMPLE LATEX SUBMISSION FILE %%%%%%%%%%%%%%%%%

\documentclass{article}
\usepackage{xcolor}
\usepackage{subcaption}
%%%%% NEW MATH DEFINITIONS %%%%%

\usepackage{amsmath,amsfonts,bm,xspace,dsfont,amssymb,mathtools,amsthm}
\usepackage[scr]{rsfso}
% Mark sections of captions for referring to divisions of figures

% Highlight a newly defined term

% Figure reference, lower-case.

% Figure reference, capital. For start of sentence

% Section reference, lower-case.

% Section reference, capital.

% Reference to two sections.

% Reference to three sections.

% Reference to an equation, lower-case.
\def\eqref#1{(\ref{#1})}
% Reference to an equation, upper case
%\def\Eqref#1{Equation~\ref{#1}}
% A raw reference to an equation---avoid using if possible

% Reference to a chapter, lower-case.

% Reference to an equation, upper case.

% Reference to a range of chapters

% Reference to an algorithm, lower-case.

% Reference to an algorithm, upper case.

% Reference to a part, lower case

% Reference to a part, upper case

\def\1{\bm{1}}

% Random variables

% rm is already a command, just don't name any random variables m

% Random vectors

% Elements of random vectors

% Random matrices

% Elements of random matrices

% Vectors

% \def\va{{\bm{a}}}
% \def\vb{{\bm{b}}}
% \def\vc{{\bm{c}}}
% \def\vd{{\bm{d}}}
% \def\ve{{\bm{e}}}
% \def\vf{{\bm{f}}}
% \def\vg{{\bm{g}}}
% \def\vh{{\bm{h}}}
% \def\vi{{\bm{i}}}
% \def\vj{{\bm{j}}}
% \def\vk{{\bm{k}}}
% \def\vl{{\bm{l}}}
% \def\vm{{\bm{m}}}
% \def\vn{{\bm{n}}}
% \def\vo{{\bm{o}}}
% \def\vp{{\bm{p}}}
% \def\vq{{\bm{q}}}
% \def\vr{{\bm{r}}}
% \def\vs{{\bm{s}}}
% \def\vt{{\bm{t}}}
% \def\vu{{\bm{u}}}
% \def\vv{{\bm{v}}}
% \def\vw{{\bm{w}}}
% \def\vx{{\bm{x}}}
% \def\vy{{\bm{y}}}
% \def\vz{{\bm{z}}}

% Elements of vectors

% Matrix

% Tensor
\DeclareMathAlphabet{\mathsfit}{\encodingdefault}{\sfdefault}{m}{sl}
\SetMathAlphabet{\mathsfit}{bold}{\encodingdefault}{\sfdefault}{bx}{n}

% Graph

% Sets
\def\sA{{\mathbb{A}}}

% Don't use a set called E, because this would be the same as our symbol
% for expectation.

% Entries of a matrix

% entries of a tensor
% Same font as tensor, without \bm wrapper

% The true underlying data generating distribution

% The empirical distribution defined by the training set

% The model distribution

% Stochastic autoencoder distributions

 % Laplace distribution

% Wolfram Mathworld says $L^2$ is for function spaces and $\ell^2$ is for vectors
% But then they seem to use $L^2$ for vectors throughout the site, and so does
% wikipedia.

 % See usage in notation.tex. Chosen to match Daphne's book.

%%%%%%%%%%%%%%%%%%%%%%%%%%%%%%%%%

\newcommand{\vf}{{\mathbf{f}}}
\newcommand{\vg}{{\mathbf{g}}}

\newcommand{\vw}{{\mathbf{w}}}
\newcommand{\vx}{{\mathbf{x}}}
\newcommand{\vy}{{\mathbf{y}}}
\newcommand{\vz}{{\mathbf{z}}}

\newcommand{\cA}{{\mathcal{A}}}

\newcommand{\cE}{{\mathcal{E}}}
\newcommand{\cF}{{\mathcal{F}}}
\newcommand{\cG}{{\mathcal{G}}}

\newcommand{\cO}{{\mathcal{O}}}

\newcommand{\EE}{\mathbb{E}}
\newcommand{\RR}{\mathbb{R}}

\newcommand{\bxi}{\boldsymbol{\xi}}

\newcommand{\one}{\mathds{1}_n}

\newcommand{\ie}{{\em i.e.\xspace}}

\newcommand{\prog}{\mathrm{prog}}

\definecolor{gan}{RGB}{128,128,255}

\newcommand{\Rowonly}{{\sc Row-Only}\xspace}
\newcommand{\Colonly}{{\sc Col-Only}\xspace}
\newcommand{\pushsum}{{\sc Push-Sum}\xspace}

\newcommand{\pulldiag}{{\sc Pull-Diag}\xspace}

\newcommand{\norm}[1]{\| #1 \|}
\newcommand{\ip}[1]{\left\langle#1\right\rangle}

\newtheorem{thm}{Theorem}
\newtheorem{remark}{Remark}
\newtheorem{assumption}{Assumption}
\newtheorem{lemma}[thm]{Lemma}

\newtheorem{proposition}[thm]{Proposition}

\newtheorem{theorem}{Theorem}

% Recommended, but optional, packages for figures and better typesetting:
\usepackage{microtype}
\usepackage{graphicx}
\usepackage{booktabs} % for professional tables、\usepackage{tocloft}
% \usepackage{minitoc} % 加载 minitoc 宏包

% \renewcommand{\mtcfont}{\normalsize\bfseries} % 自定义局部目录标题样式
% \renewcommand{\mtcSfont}{\footnotesize}       % 自定义子目录样式
 % 加载 appendix 宏包，并支持 toc
%\usepackage{tocloft} % 自定义目录格式
% hyperref makes hyperlinks in the resulting PDF.
% If your build breaks (sometimes temporarily if a hyperlink spans a page)
% please comment out the following usepackage line and replace
 %\usepackage{icml2025} with \usepackage[nohyperref]{icml2025} above.
\usepackage{hyperref}

% Attempt to make hyperref and algorithmic work together better:

% Use the following line for the initial blind version submitted for review:
%\usepackage{icml2025}

% If accepted, instead use the following line for the camera-ready submission:
\usepackage[accepted]{icml2025}

% For theorems and such
\usepackage{amsmath}
\usepackage{amssymb}
\usepackage{mathtools}
\usepackage{amsthm}
\usepackage{tcolorbox}
% if you use cleveref..
\usepackage[capitalize,noabbrev]{cleveref}

%%%%%%%%%%%%%%%%%%%%%%%%%%%%%%%%
% THEOREMS
%%%%%%%%%%%%%%%%%%%%%%%%%%%%%%%%
\theoremstyle{plain}

% Todonotes is useful during development; simply uncomment the next line
%    and comment out the line below the next line to turn off comments
%\usepackage[disable,textsize=tiny]{todonotes}
\usepackage[textsize=tiny]{todonotes}

% 定义定理环境

\usepackage{amsthm} % 用于定理环境
\usepackage{tcolorbox} % 用于高亮块
\usepackage{lipsum} % 用于生成示例文本
%\tcbuselibrary{skins, breakable} % 加载 tcolorbox 的额外功能
\usepackage{pgfkeys}

% 定义灰色高亮的 tcolorbox 样式
\newtcolorbox{theoremblock}{
    colback=gray!30, % 背景颜色为灰色，20% 透明度
    colframe=gray!50, % 边框颜色为灰色，50% 透明度
    boxrule=0.5pt,
    arc=2pt, % 圆角半径
    %boxse2=0.5pt, % 内边距
    left=6pt, % 左边距
    right=4pt, % 右边距
    top=6pt, % 上边距
    bottom=6pt, % 下边距
    before upper={\setlength{\parindent}{2em}}, % 保持段落缩进
    fontupper=\normalfont % 保持字体正常
}
\let\oldtheorem\theorem % 保存原始的 theorem 环境
\let\endoldtheorem\endtheorem
\renewenvironment{theorem}[1][]{
    \begin{theoremblock}
    \oldtheorem[#1] % 保留定理的可选参数
}{\endoldtheorem\end{theoremblock}}

\usepackage{hyperref}
% The \icmltitle you define below is probably too long as a header.
% Therefore, a short form for the running title is supplied here:
\icmltitlerunning{Achieving Linear Speedup and Near-Optimal Complexity for Decentralized Optimization over Row-Stochastic Networks}

\begin{document}

\twocolumn[
\icmltitle{Achieving Linear Speedup and Near-Optimal Complexity for Decentralized Optimization over Row-stochastic Networks}

% It is OKAY to include author information, even for blind
% submissions: the style file will automatically remove it for you
% unless you've provided the [accepted] option to the icml2025
% package.

% List of affiliations: The first argument should be a (short)
% identifier you will use later to specify author affiliations
% Academic affiliations should list Department, University, City, Region, Country
% Industry affiliations should list Company, City, Region, Country

% You can specify symbols, otherwise they are numbered in order.
% Ideally, you should not use this facility. Affiliations will be numbered
% in order of appearance and this is the preferred way.
\icmlsetsymbol{equal}{*}

\begin{icmlauthorlist}
\icmlauthor{Liyuan Liang}{yyy}
 \icmlauthor{Xinyi Chen}{yyy}
 \icmlauthor{Gan Luo}{yyy}
\icmlauthor{Kun Yuan}{sch}
\end{icmlauthorlist}

\icmlaffiliation{yyy}{School of Mathematics Science, Peking University, Beijing, China
}
%\icmlaffiliation{comp}{Company Name, Location, Country}
\icmlaffiliation{sch}{Center for Machine Learning Research, Peking University, Beijing, China}

\icmlcorrespondingauthor{Kun Yuan}{kunyuan@pku.edu.cn}

\icmlkeywords{Machine Learning, ICML}

\vskip 0.3in
]

% this must go after the closing bracket ] following \twocolumn[ ...

% This command actually creates the footnote in the first column
% listing the affiliations and the copyright notice.
% The command takes one argument, which is text to display at the start of the footnote.
% The \icmlEqualContribution command is standard text for equal contribution.
% Remove it (just {}) if you do not need this facility.

\printAffiliationsAndNotice{}  % leave blank if no need to mention equal contribution\printAffiliationsAndNotice{} % otherwise use the standard text.

\begin{abstract}
A key challenge in decentralized optimization is determining the optimal convergence rate and designing algorithms to achieve it. While this problem has been extensively addressed for doubly-stochastic and column-stochastic mixing matrices, the row-stochastic scenario remains unexplored. This paper bridges this gap by introducing effective metrics to capture the influence of row-stochastic mixing matrices and establishing the first convergence lower bound for decentralized learning over row-stochastic networks.  However, existing algorithms fail to attain this lower bound due to two key issues: deviation in the descent direction caused by the adapted gradient tracking (GT) and instability introduced by the \pulldiag protocol. To address descent deviation, we propose a novel analysis framework demonstrating that \pulldiag-GT achieves linear speedup—the first such result for row-stochastic decentralized optimization. Moreover, by incorporating a multi-step gossip (MG) protocol, we resolve the instability issue and attain the lower bound, achieving near-optimal complexity for decentralized optimization over row-stochastic networks.
\end{abstract}

\section{Introduction}\label{sec:intro}
Scaling machine learning tasks to large datasets and models requires efficient distributed computing across multiple nodes. This paper investigates decentralized stochastic optimization over a network of \( n \) nodes:
\begin{align}\label{prob-general}
\min_{x\in \mathbb{R}^d} \quad f(x) = \frac{1}{n}\sum_{i=1}^n \Big[ f_i(x) := \mathbb{E}_{\xi_i \sim \mathcal{D}_i}[F(x; \xi_i)] \Big]
\end{align}
where \(\xi_i\) is a random data vector supported on \(\Xi_i \subseteq \mathbb{R}^q\) with some distribution~\(\mathcal{D}_i\), and \(F: \mathbb{R}^d \times \mathbb{R}^q \rightarrow \mathbb{R}\) is a Borel-measurable function. Each loss function~\( f_i \) is accessible only by node~\( i \) and is assumed to be smooth and potentially nonconvex. Note that data heterogeneity typically exists, i.e., the local data distributions \( \{\mathcal{D}_i\}_{i=1}^n \) vary across nodes. In this paper, we model decentralized communication between nodes as a \emph{directed graph}, a scenario frequently encountered in practical applications. For example, bidirectional communication may be infeasible due to differences in node power ranges~\citep{yang2019survey} or connection failures~\citep{yemini2022robust,li2024decentralized}. In distributed deep learning, carefully designed directed topologies can achieve sparser and faster communication than their undirected counterparts, thereby reducing the training wall clock time~\citep{bottou2018optimization, assran2019stochastic, yuan2021decentlam}.

\textbf{Network topology and mixing matrix.} A central challenge in decentralized optimization is determining the optimal convergence rate and designing algorithms that achieve it. This requires a theoretical understanding of how network topologies influence decentralized algorithms. For any given connected network, its topology can be represented by a mixing matrix that reflects its connectivity pattern, serving as a critical tool for evaluating the network's impact. In undirected networks, symmetric and doubly-stochastic matrices can be readily constructed. However, in directed networks, constructing a doubly-stochastic mixing matrix is generally infeasible. Instead, mixing matrices are typically either column-stochastic~\citep{nedic2014distributed, Nedic2017pushdiging} or row-stochastic~\citep{sayed2014adaptive, mai2016distributed}, but not both.  

\textbf{Optimal complexity over doubly-stochastic networks is well-established.} The connectivity of a doubly-stochastic mixing matrix can be effectively assessed using the spectral gap, a metric that quantifies how closely a decentralized network approximates a fully connected one. Leveraging this metric, several studies have established the optimal convergence rates for decentralized algorithms. For example, references~\cite{scaman2017optimal, scaman2018optimal, sun2019distributed, kovalev2021lower} derive optimal convergence rates for convex or non-stochastic decentralized optimization. \citet{lu2021optimal} determine the optimal complexity for non-convex and stochastic decentralized optimization over a specific type of linear network, while \citet{yuan2022revisiting} extend this complexity to a significantly broader class of networks. The optimal complexity for doubly-stochastic time-varying networks has been established in \cite{huang2022optimal, li2024accelerated, kovalev2021lower}.

\textbf{Optimal complexity over column-stochastic networks is established recently.} If out-degree information is available prior to communication, a column-stochastic matrix can be readily constructed. When decentralized algorithms rely solely on column-stochastic matrices, this is referred to as the \Colonly setting. The \pushsum gossip protocol~\citep{kempe2003gossip, tsianos2012push} forms the foundation of \Colonly algorithms. Many algorithms based on \pushsum achieve superior convergence rates, e.g.,~\citet{Olshevsky2015Dist, tsianos2012push, zeng2017extrapush, xi2017dextra, xi2017add, Nedic2017pushdiging, assran2019stochastic, qureshi2020s}. However, these works fail to precisely capture the influence of column-stochastic networks and, as a result, do not clarify the optimal complexity in the \Colonly setting. This open question is addressed in a recent study by \citet{liang2023towards}, which introduces effective metrics to evaluate the influence of column-stochastic networks, establishes the optimal lower bound for the \Colonly setting, and proposes algorithms that achieve this bound.

\textbf{Optimal complexity over row-stochastic networks remains unclear yet.} If out-degree information is unavailable, column-stochastic matrices cannot be directly constructed. However, row-stochastic matrices can be formed using in-degree information, which can be easily obtained by counting received messages. This is referred to as the \Rowonly setting. Similar to how \pushsum serves as the basis for \Colonly algorithms, the foundation of \Rowonly methods is the \pulldiag gossip protocol~\citep{mai2016distributed}. Building on \pulldiag, \citet{mai2016distributed} adapted the distributed gradient descent (DGD) algorithm for the \Rowonly setting, while \citet{li2019row, FROST-Xinran} extended gradient tracking methods, and \citet{ghaderyan2023fast, lu2020nesterov, xin2019FROZEN} introduced momentum-based \Rowonly gradient tracking. However, the convergence analysis for \Rowonly algorithms is still quite limited. Current analyses focus only on deterministic and strongly convex loss functions, leaving the performance of \Rowonly algorithms in non-convex and stochastic settings unknown. More importantly, the impact of row-stochastic networks on the convergence rate of \Rowonly algorithms remains unclear. These gaps present significant obstacles to determining the optimal complexity in the \Rowonly setting. Some fundamental open problems are: 

\begin{itemize} 
    \item[Q1.] {What are the effective metrics that can fully capture the impact of row-stochastic networks on decentralized stochastic optimization, and how do they influence the convergence of prevalent \Rowonly algorithms?}

    % \vspace{1mm}
    \item[Q2.] {Given these metrics, what is the lower bound on the convergence rate for \Rowonly algorithms in the non-convex and stochastic setting?}

    % \vspace{1mm}
    \item[Q3.] {Can existing \Rowonly algorithms readily achieve the optimal convergence rate? If not, what limitations do they face?
    }

    % \vspace{1mm}
    \item[Q4.] Can we develop new \Rowonly algorithms that overcome the limitations of existing algorithms and attain the aforementioned lower bound?
\end{itemize}

\textbf{Main contributions.} This paper improves the understanding of decentralized methods over row-stochastic networks by addressing these open questions. Our contributions are :

\begin{itemize} 
    \item[C1.] We find that the metrics \textit{generalized spectral gap} and \textit{equilibrium skewness}, proposed by \cite{liang2023towards} to characterize the influence of column-stochastic networks, can also effectively capture the impact of row-stochastic networks on decentralized algorithms.

    \item[C2.] Using these metrics, we establish the \textit{first} lower bound on the convergence rate for nonconvex decentralized stochastic first-order algorithms with a row-stochastic mixing matrix. This bound achieves linear speedup with respect to network size \( n \) and captures the influence of gradient noise, the mixing matrix, the number of nodes and the smoothness of the loss function.  
    
    \item[C3.] We find existing \Rowonly algorithms cannot attain the aforementioned lower bound due to two challenges. First, the use of row-stochastic mixing matrices alone introduces a deviation in the descent direction from the globally averaged gradient, preventing existing analyses from achieving linear speedup convergence. Second, the \pulldiag protocol introduces inversion of small values during operation, which introduces instability in \Rowonly algorithms.

    \item[C4.] We develop a novel analysis framework proving that \pulldiag-GT achieves linear speedup, marking the \textit{first} such result in \Rowonly scenarios. Moreover, when integrated with a multistep gossip (MG) protocol, MG-\pulldiag-GT addresses the instability caused by the inversion of small values and achieves the established lower bound. Therefore, both the lower bound and our algorithm achieve optimal complexity for decentralized learning over row-stochastic networks.
    
    % address the main difficulty for existing \Rowonly algorithms in attaining the aforementioned lower bound. Second, the \pulldiag based algorithmic construction introduces new challenges in analysis. The application of the existing convergence analysis technique will unavoidably require the assumption of data heterogeneity. However, neither the instability nor the heterogeneity of the data affects our lower bound, suggesting that these issues can be eliminated by improving algorithm analysis / design.

    %We develop novel \Rowonly algorithms to achieve the established lower bound. First, we propose a \pullsum gossip protocol that avoids the inversion of small values. Next, we introduce a new row-stochastic gradient tracking structure that removes the impact of data heterogeneity. Together with a multi-step gossip protocol, these techniques will yield an effective algorithm that nearly attains the established lower bound, demonstrating its near-optimal performance and the tightness of the established lower bound. 
\end{itemize}

\textbf{Notations.} Let \(\one\) denote the vector of all-ones of \(n\) dimensions and \(I_n\in\mathbb{R}^{n\times n}\) the identity matrix. We let matrix \(A\) denote the row-stochastic matrix (\(A\one = \one\)). The set \([n]\) represents the indices \(\{1, 2, \dots, n\}\). \(\mathrm{Diag}(A)\) refers to the diagonal matrix composed of the diagonal entries of \(A\), while \(\mathrm{diag}(v)\) is the diagonal matrix derived from vector \(v\). The Perron vector of matrix \(A\) is \(\pi_A\), \ie, $\pi_A \ge 0, \pi_A^\top A = \pi_A^\top, \pi_A^\top \mathds{1} = 1$. By letting \(\Pi_A := \mathrm{diag}(\pi_A)\), we define \(\norm{v}_{\pi_A} := \norm{\Pi_A^{1/2}v}\), which is associated with the induced matrix norm \(\norm{W}_{\pi_A} := \norm{\Pi_A^{1/2}W\Pi_A^{-1/2}}_2\). We define 
\(A_\infty := \one \pi_A^\top\). The vector \(\bm{x}^{(k)}_{i}\in \mathbb{R}^d\) represents the local model at node \(i\) during iteration \(k\). We also define $n\times d$ matrices
\begin{align*}
\vx^{(k)} &\hspace{-0.5mm}:=\hspace{-0.5mm} [(\bm{x}_1^{(k)})^\top\hspace{-0.5mm}; (\bm{x}_2^{(k)})^\top\hspace{-0.5mm}; \cdots; (\bm{x}_n^{(k)})^\top\hspace{-0.5mm}]  \\
\nabla\hspace{-0.5mm} F(\vx^{(k)};\hspace{-0.5mm}\bxi^{(k)}\hspace{-0.5mm}) &\hspace{-0.5mm}:=\hspace{-0.5mm} [\nabla F_1(\bm{x}_1^{(k)};\hspace{-0.5mm}\xi_1^{(k)})^\top\hspace{-0.5mm};\hspace{-0.5mm} \cdots\hspace{-0.5mm};\hspace{-0.5mm} \nabla F_n(\bm{x}_n^{(k)};\xi_n^{(k)})^\top\hspace{-0.5mm}] \\
\nabla f(\vx^{(k)}) &\hspace{-0.5mm}:=\hspace{-0.5mm} [\nabla f_1({\bm x}^{(k)}_1)^\top;
\cdots;\nabla f_n({\bm x}^{(k)}_n)^\top] 
\end{align*}
% \vspace{-2pt}
% \begin{align*}
%     \vx^{(k)} &:= [(\bm{x}_1^{(k)})^\top; (\bm{x}_2^{(k)})^\top; \cdots; (\bm{x}_n^{(k)})^\top]\in \mathbb{R}^{n\times d}, \\
%     \nabla F(\vx^{(k)};\bxi^{(k)}) &:= [\nabla F_1(\bm{x}_1^{(k)};\xi_1^{(k)})^\top; \cdots; \nabla F_n(\bm{x}_n^{(k)};\xi_n^{(k)})^\top]\in \mathbb{R}^{n\times d},
%     %\\ \nabla f(\vx^{(k)}) &:= [\nabla f_1(x_1^{(k)})^\top; \nabla f_2(x_2^{(k)})^\top; \cdots; \nabla f_n(x_n^{(k)})^\top]\in \mathbb{R}^{n\times d}, \nonumber
% \end{align*}
by stacking all local variables. The upright bold symbols (e.g. $\vx,\vw,\vg$) always denote network-level quantities. We define filtration $\cF_k$ as the collection of all the information available up to $\vx^{(k)}$, excluding the stochastic gradient evaluated at $\vx^{(k)}$. We use the symbol $\lesssim$ to represent inequality up to absolute constants.

\section{Metrics for Row-stochastic Networks} 

We consider a directed network with $n$ computing nodes that is associated with a mixing matrix $A=[a_{ij}]_{i,j=1}^n \in\RR^{n\times n}$ where $a_{ij} \in (0,1]$ if node $j$ can send information to node $i$ otherwise $a_{ij} = 0$. Decentralized optimization is built upon partial averaging $\bm{z}_i^+ = \sum_{j\in \mathcal{N}_i}a_{ij} \bm{z}_j$ in which 
$\bm{z}_i \in \RR^d$ is a local vector held by node $i$ and $\mathcal{N}_i$ denotes the in-neighbors of node $i$, including node $i$ itself. Since every node conducts partial averaging simultaneously, we have 
\begin{equation}\label{eqn:a-protocol}
\vz \triangleq [\bm{z}_1^\top; \bm{z}_2^\top; \cdots; \bm{z}_n^\top]\ \xmapsto{\text{A\text{-protocol}}} \ \vz^+ = A\vz %=[\sum_{j\in \mathcal{N}_1}a_{1j}\bm{z}_j^\top;\cdots;\sum_{j\in \mathcal{N}_n} a_{nj}\bm{z}_j^\top ]
\end{equation}
where $A$-protocol represents partial averaging with mixing matrix $A$. Evidently, the algebraic characteristics of $A$ substantially affect the convergence of partial averaging and the corresponding decentralized optimization. This section explores metrics that capture the characteristics of $A$.

\subsection{Row-stochastic Mixing Matrix} This paper focuses on a static directed network $\cG = (V,\cE)$ associated with a row-stochastic matrix $A=[a_{ij}]_{n\times n}$.
\begin{assumption}[Primitive and Row-stochastic] \label{ass-weight-matrix}
The mixing matrix $A$ is nonnegative, primitive and satisfies $ A \one= \one$. The weight $a_{ij} \in (0,1]$, if $(i \to j)\in \cE$, otherwise $a_{ij}=0$.
\end{assumption}
If \(\mathcal{G}\) is strongly-connected, \ie, there exists a directed path from each node to every other node, and \(A\) has a positive trace, then \(A\) is primitive. It is straightforward to make \(A\) row-stochastic by setting $a_{ij}=1/(1+d_i^{\rm in})$ if $(i,j)\in \cE \text{ or } j=i$ otherwise $a_{ij}=0$,
% \vspace{-2pt}
% \begin{equation*}
% a_{ij}=\begin{cases}
%     1/(1+d_i^{\rm in}) & \text{if }(j, i)\in \cE \text{ or }j=i,\\
%     0 &\text{otherwise},
% \end{cases}
% \end{equation*}
where $\cE$ is the set of directed edges and $d_i^{\rm in}$ is the in-degree of node $i$ excluding the self-loop.
With Assumption~\ref{ass-weight-matrix}, we have the following result: 
\begin{proposition}[Perron-Frobenius theorem~\citep{Perron1907}]\label{prop-PF}
If matrix \( A \) satisfies Assumption \ref{ass-weight-matrix}, there exists a unique equilibrium vector \( \pi_A \in \mathbb{R}^n \) with positive entries such that
\begin{align*}
\pi_A^\top A = \pi_A^\top, \quad \mathds{1}_n^\top \pi_A=1, \quad \mbox{and} \quad \lim_{k\to \infty} A^k = \mathds{1}_n \pi_A^\top.
\end{align*}
\end{proposition}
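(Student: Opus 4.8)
The plan is to recognize this as the classical Perron--Frobenius theorem specialized to a primitive, row-stochastic matrix, and to organize the argument around four steps: pinning down the spectral radius, producing the positive left eigenvector, establishing its uniqueness, and finally extracting the rank-one limit of the powers $A^k$.

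First I would fix the spectral radius. Since $A$ is row-stochastic, $\norm{A}_\infty = \max_i \sum_j a_{ij} = 1$, so every eigenvalue obeys $|\lambda| \le \rho(A) \le 1$; and $A\mathds{1}_n = \mathds{1}_n$ exhibits $1$ as an eigenvalue with right eigenvector $\mathds{1}_n$, whence $\rho(A)=1$. Next I would produce $\pi_A$ by a fixed-point argument. Because $A$ is row-stochastic, $A^\top$ maps the probability simplex $\Delta = \{x \ge 0 : \mathds{1}_n^\top x = 1\}$ into itself: for $x\in\Delta$ one has $A^\top x \ge 0$ and $\mathds{1}_n^\top A^\top x = (A\mathds{1}_n)^\top x = \mathds{1}_n^\top x = 1$. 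As $\Delta$ is compact and convex and $x\mapsto A^\top x$ is continuous, Brouwer's fixed-point theorem yields $\pi_A\in\Delta$ with $A^\top \pi_A = \pi_A$, i.e. $\pi_A^\top A = \pi_A^\top$ and $\mathds{1}_n^\top \pi_A = 1$. Positivity then comes from primitivity: choosing $m$ with $A^m > 0$ entrywise, the identity $\pi_A = (A^m)^\top \pi_A$ expresses $\pi_A$ as a strictly positive matrix applied to a nonnegative nonzero vector, forcing $\pi_A > 0$.

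The heart of the argument, and the step I expect to be the main obstacle, is the convergence $A^k \to \mathds{1}_n \pi_A^\top$, which rests on the defining feature of primitivity: the eigenvalue $1$ is simple and strictly dominates all other eigenvalues in modulus. This is the substantive content of the theorem for primitive matrices; I would obtain it by applying Perron's theorem to the strictly positive matrix $A^m$, whose dominant eigenvalue $\rho(A)^m=1$ is then simple and strictly dominant, and transferring this back to $A$. Granting it, I set $P := \mathds{1}_n \pi_A^\top$ and verify directly that $P^2 = \mathds{1}_n(\pi_A^\top \mathds{1}_n)\pi_A^\top = P$ and $AP = PA = P$ (using $A\mathds{1}_n = \mathds{1}_n$ and $\pi_A^\top A = \pi_A^\top$), so that $P$ is the spectral projection onto the eigenvalue-$1$ eigenspace and $Q := I_n - P$ is the complementary projection, commuting with $A$.

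It then remains to decompose the powers. Since $AP=P$ gives $A^k P = P$, and since $AQ = QA$ with $Q^2 = Q$ gives $(AQ)^k = A^k Q^k = A^k Q$, I can write $A^k = A^k(P+Q) = P + (AQ)^k$. The operator $AQ$ acts on $\mathrm{range}(Q)$ with spectral radius $\max\{|\lambda| : \lambda \in \mathrm{spec}(A),\ \lambda \ne 1\} < 1$, so $(AQ)^k \to 0$ and hence $A^k \to P = \mathds{1}_n \pi_A^\top$, as claimed. Finally, uniqueness of $\pi_A$ is a corollary of simplicity: the eigenvalue-$1$ eigenspace of $A^\top$ is one-dimensional, so any fixed point of $A^\top$ normalized to lie on $\Delta$ must coincide with $\pi_A$.
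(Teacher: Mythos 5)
Your proposal is correct, but note that the paper itself offers no proof of this proposition: it is stated as the classical Perron--Frobenius theorem and simply cited (to Perron, 1907), so there is no in-paper argument to compare against. Your route --- fixing $\rho(A)=1$ via $\norm{A}_\infty=1$ and $A\mathds{1}_n=\mathds{1}_n$, obtaining $\pi_A$ by Brouwer on the simplex, deducing positivity from $\pi_A=(A^m)^\top\pi_A$ with $A^m>0$, and then proving $A^k\to \mathds{1}_n\pi_A^\top$ via the spectral projection $P=\mathds{1}_n\pi_A^\top$ and the identity $A^k=P+(AQ)^k$ with $\rho(AQ)<1$ --- is the standard textbook derivation and all the algebraic verifications ($P^2=P$, $AP=PA=P$, $(AQ)^k=A^kQ$) check out. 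The one place where you compress a genuinely nontrivial step is the ``transferring this back to $A$'': you should spell out that if $\lambda\neq 1$ were an eigenvalue of $A$ with $|\lambda|=1$, then $\lambda^m$ would be an eigenvalue of the positive matrix $A^m$ of modulus $\rho(A^m)=1$, forcing $\lambda^m=1$ by Perron's theorem, and the resulting two independent eigenvectors of $A^m$ for the eigenvalue $1$ would contradict its simplicity; likewise algebraic multiplicity of $1$ for $A$ is controlled by that for $A^m$. With that sentence added, the argument is complete.
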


%If $A$ is doubly-stochastic, it holds that $\pi_A = n^{-1}\mathds{1}_n$. 

\subsection{Effective Metrics to Characterize Row-stochastic $A$}
Most decentralized algorithms rely on gossip protocols like~(\ref{eqn:a-protocol}), where local variables are partially mixed to approximate the global average. The properties of the mixing matrix \(A\) play a critical role in determining both the feasibility of achieving the global average and the efficiency of this process. These properties serve as key metrics for assessing the influence of \(A\) on algorithmic performance.
% Most decentralized algorithms rely on gossip protocols like~(\ref{eqn:a-protocol}), where local variables are partially mixed to approximate the global average. The properties of the mixing matrix \(A\) are crucial in determining whether this gossip mixing can achieve the global average and how efficiently this process occurs. These properties will become effective metrics for evaluating the influence of $A$ on algorithmic performance. 

Now we examine the $A$-protocol \eqref{eqn:a-protocol} where $A$ is a row-stochastic mixing matrix $A$. Suppose that each node $i$ has a local variable $\bm{z}_i \in \RR^d$, we let $\vz = [\bm{z}_1^\top; \bm{z}_2^\top; \cdots; \bm{z}_n^\top] \in \RR^{n\times d}$ and initialize $\vx^{(0)} = \vz$. Following the gossip protocol as in (\ref{eqn:a-protocol}), we have the following recursions: 
\begin{align}\label{eq:A-protocol}
    \vx^{(k)} = A \vx^{(k-1)} = A^k \vx^{(0)} \xmapsto{k\to \infty}  \mathds{1}_n \pi_A^\top \vz,
\end{align}
where we utilize the property $\lim_{k\to \infty} A^k = \mathds{1}_n \pi_A^\top$ (see Proposition \ref{prop-PF}). It is evident that the matrix \(A\) influences both whether and how quickly \(\vx^{(k)}\) approaches the global average \(\mathds{1}_n \mathds{1}_n^\top \vz/n\). Inspired by \eqref{eq:A-protocol}, we use the following two metrics to characterize the row-stochastic matrix $A$: 
\begin{itemize}

\item The \textbf{generalized spectral gap} \(1 - \beta_A\) of the row-stochastic matrix \(A\), where
\[
\beta_A := \left\|A - \mathds{1}_n \pi_A^\top\right\|_{\pi_A} = \left\|A - A_\infty \right\|_{\pi_A} \in [0,1)
\]
quantifies the convergence rate of \(\vx^{(k)}\) to the weighted average \(\mathds{1}_n \pi_A^\top \vz\) in (\ref{eq:A-protocol}). The $\pi_A$-norm was defined in the notations in Sec.~\ref{sec:intro}. As \(\beta_A\) approaches 0, the iterates \(\vx^{(k)}\) converge more rapidly to the fixed point \(\mathds{1}_n \pi_A^\top \vz\) of the $A$-protocol \eqref{eq:A-protocol}. 

\item The \textbf{equilibrium skewness} %\(\quad \kappa_A := \max(\pi_A)/\min(\pi_A) \in [1,+\infty)\\\) 
\begin{align*}
\kappa_A := \max(\pi_A)/\min(\pi_A) \in [1,+\infty) 
\end{align*}
captures the disagreement between the equilibrium vector $\pi_A$ and the uniform vector $n^{-1}\one$. When $\kappa_A \to 1$, it holds that $\pi_A \to \mathds{1}_n/n$, and hence, the weighted average aligns better with the global average $\one \one^\top \vz/n$. 
% Consequently, equilibrium skewness determines the proximity of the fixed point of the gossip protocol \( A^k\vz \) to the desired solution \( \mathds{1}_n \pi_A^\top \vz \).
\end{itemize}

The spectral gap gauges the rate of the $A$-protocol when converging to the fixed point \( \mathds{1}_n \pi_A^\top \vz \), while equilibrium skewness measures the proximity of the fixed point to the desired global average \( \mathds{1}_n \mathds{1}_n^\top \vz/n \). Together, these metrics effectively capture the influence of row-stochastic mixing matrices and directed networks on decentralized algorithms. Omitting either would lead to an incomplete understanding of their impact.  Figure~\ref{fig-pulldiag} shows examples that the spectral gap and equilibrium skewness jointly impact the convergence to average consensus. The protocol used in Figure~\ref{fig-pulldiag} is called \pulldiag and will be discussed in Sec.~\ref{sec:pull-diag}. 

It is important to note that these two metrics are not new; they were proposed in \citep{xin2019SAB,liang2023towards} to assess the influence of column-stochastic mixing matrices. Our contribution lies in demonstrating that these metrics are also applicable to row-stochastic mixing matrices. Prior to our work, no literature had examined the metrics that can gauge the influence of row-stochastic mixing matrices on decentralized algorithms.

%Another remark is that the standard gossip protocol using a row-stochastic matrix \(A\) in (\ref{eq:A-protocol}) cannot achieve the global average. However, this issue can be resolved with an enhanced gossip protocol called \pulldiag \citep{mai2016distributed,xi2018linear}, which will be discussed in Section~\ref{sec: limitations in existing algorithms}.
\subsection{Pull-Diag Protocol Corrects Weighted Average}\label{sec:pull-diag}

According to \eqref{eq:A-protocol}, $A$-protocol alone cannot achieve the desired global average. The bias between the limiting weighted average \( \mathds{1}_n \pi_A^\top \vz \) and the desired global average \( \mathds{1}_n \mathds{1}_n^\top \vz/n \) can be corrected by the following manner: 
\begin{align}
&\ A^k \mathrm{diag}(n\pi_A)^{-1} \vz \nonumber \\
\xmapsto{k\to \infty}&\  \mathds{1}_n \pi_A^\top \mathrm{diag}(n\pi_A)^{-1} \vz = \mathds{1}_n \mathds{1}_n^\top \vz/n. \label{eq-ideal-pull-diag}
\end{align}
Although the above strategy is effective, the quantity \(\pi_A\) is not known beforehand. To estimate \(\pi_A\), prior works~\cite{mai2016distributed,FROST-Xinran,ghaderyan2023fast} use power iterations, resulting in a practical and efficient approach referred to as \pulldiag in this paper:
\begin{subequations}
\label{eq-pull-diag}
\begin{align}
    V_{k+1} &= A V_{k}, \ \quad D_{k+1} = \mathrm{Diag}(n V_{k+1}),\\
    % \Tilde{D}_{k+1}&=A\Tilde{D}_k, \\
    \vz^{(k+1)} &= V_{k+1} D_{k+1}^{-1}\vz.  \label{eq-pull-diag-2}
\end{align}  
\end{subequations}
With initialization $V_0 = I_n$, we have $V_k = A^k$ and $D_k = \mathrm{Diag}(nA^k)$. It holds that $V_k \to \mathds{1}_n\pi_A^\top$ and $D_k \to \mathrm{diag}(n \pi_A)$ as $k\to \infty$. Substituting these facts into \eqref{eq-pull-diag-2}, we asymptotically achieve the bias correction illustrated in \eqref{eq-ideal-pull-diag}. The distributed implementation details of the \pulldiag protocol can be found in Appendix~\ref{app:Implementation Details}.  It is shown in Figure~\ref{fig-pulldiag} that \pulldiag protocol corrects the weighted average and converges exponentially fast.

\begin{figure}[t]

\includegraphics[width=4cm]{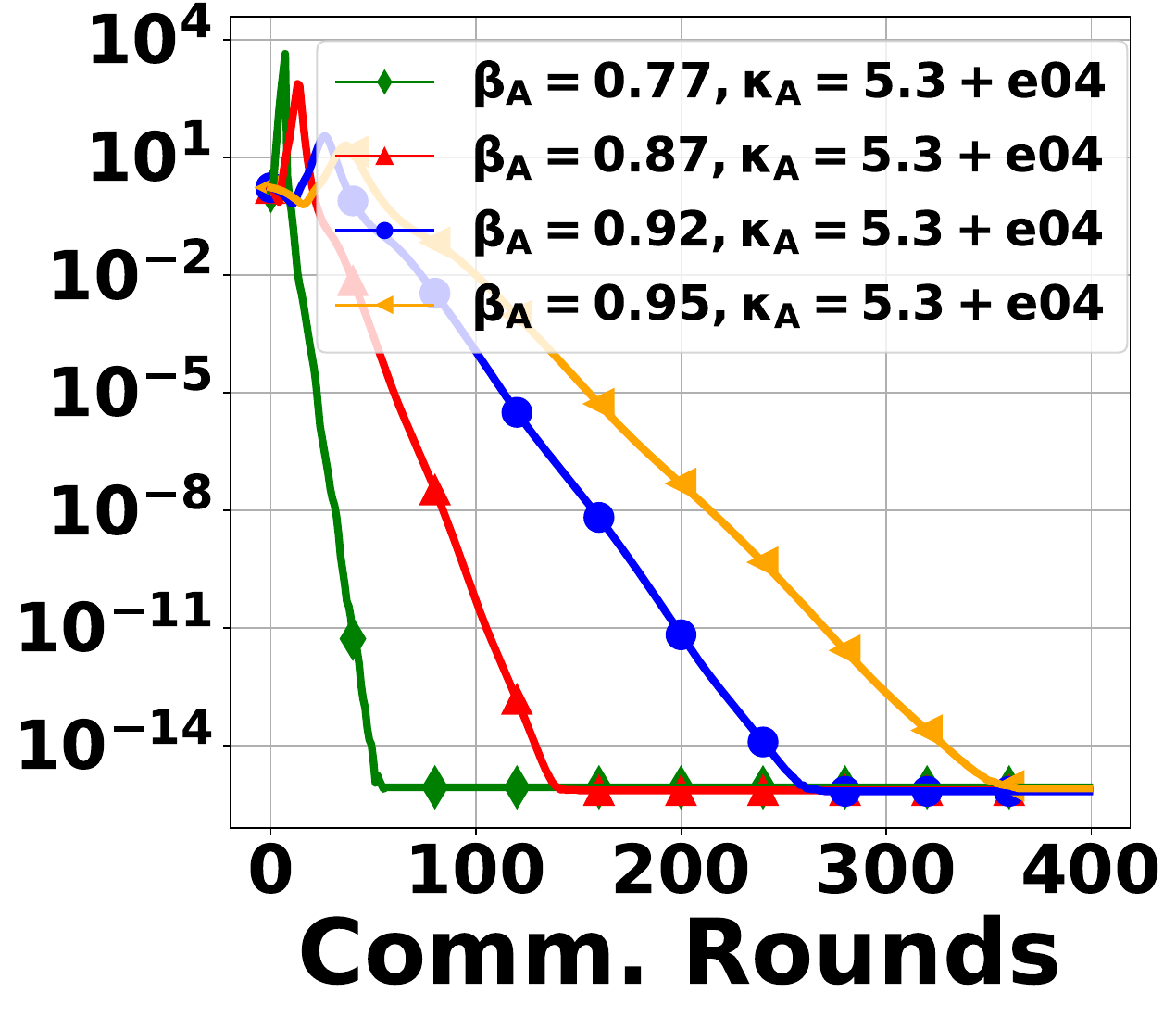}
\includegraphics[width=4cm]{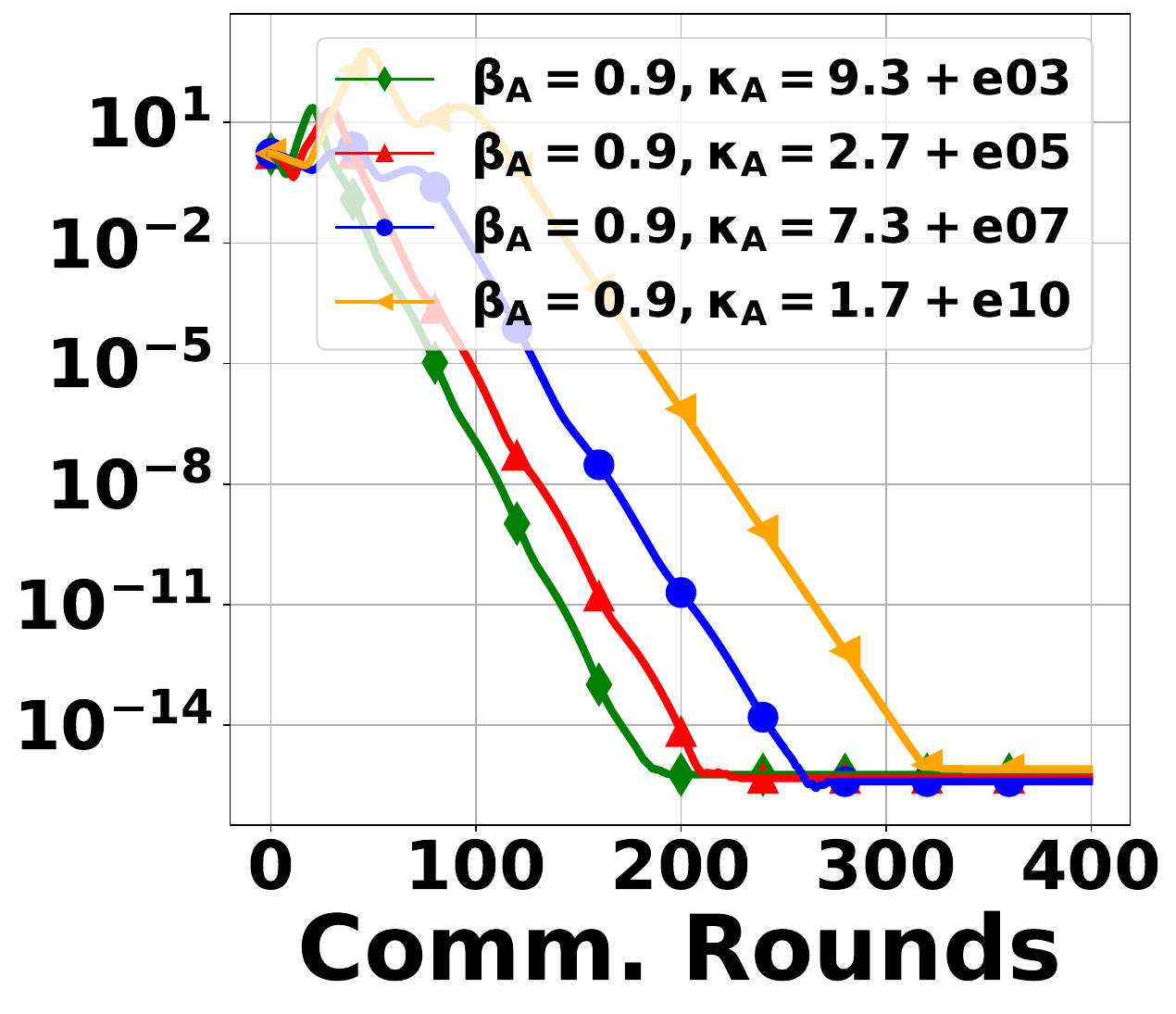}
\vspace{-3mm}
\caption{
Convergence of \pulldiag protocol on different mixing matrices with varying spectral gaps (\(\beta_A\)) and equilibrium skewness (\(\kappa_A\)). The \(y\)-axis represents the consensus error $\|\vz^{(k)} - \mathds{1}_n\mathds{1}_n^\top \vz/n\|$. The left plot shows fixed \(\kappa_A\) with different \(\beta_A\), while the right plot shows fixed \(\beta_A\) with different \(\kappa_A\).}
\label{fig-pulldiag}
\vspace{-5mm}
\end{figure}

\section{Convergence Lower Bounds}

\subsection{Assumptions}
This subsection specifies the category of decentralized algorithms to which our lower bound applies.

\noindent \textbf{Function class.} We define the function class $\cF_{\Delta, L}$ as the set of functions that satisfy Assumption~\ref{ass:smooth}, for any given dimension $d\in \mathbb{N}_+$ and any initialization point $x^{(0)}\in\RR^d$.
\begin{assumption}[Smoothness] \label{ass:smooth}
	There exists constants $L,\Delta\geq 0$ such that
	$$
	\left\|\nabla f_{i}(\bm{x})-\nabla f_{i}(\bm{y})\right\| \leq L\norm{\bm{x}-\bm{y}},\, \forall i\in n, \bm{x},\bm{y}\in \RR^d.
	$$
	%for all $1\leq i\leq n, \bm{x}, \bm{y} \in \mathbb{R}^{d}$, 
    and for initial model parameter $\bm{x}^{(0)}$, 
    \[f_i(\bm{x}^{(0)})-\inf _{\bm{x} \in \mathbb{R}^{d}} f_i(\bm{x})\leq \Delta,\quad \forall i \in[n].\]
    %$$ for all $1\le i\le n$.
\end{assumption}

\noindent \textbf{Gradient oracle class.} 
We assume that each node $i$ processes its local cost function $f_i$ using a stochastic gradient oracle $\nabla F(\bm{x};\xi_i)$, which provides unbiased estimates of the exact gradient $\nabla f_i$ with bounded variance. Specifically, we define the stochastic gradient oracle class $O_{\sigma^2}$ as the set of all oracles $\nabla F(\cdot;\xi_i)$ that satisfy Assumption~\ref{ass:bounded noise}.

\begin{assumption}[Gradient oracles]\label{ass:bounded noise}
There exists a constant $\sigma\geq 0$ such that for all $\bm{x}\in \mathbb{R}^d, \,i\in[n]$ we have
	\begin{align*}
		\mathbb{E}[\nabla F_i(\bm{x};\xi)] = \nabla f_i(\bm{x}), \quad \mathrm{tr}(\mathrm{Var}[\nabla F_i(\bm{x};\xi)]) \le \sigma^2.
	\end{align*} 
We also assume that the gradient noise is linearly independent, i.e., $\forall i\neq j\in [n]$ we have
\begin{align*}
    \mathrm{Cov}(\nabla F_i(\bm{x}_i;\xi_i),\nabla F_j(\bm{x}_j;\xi_j))=0.
\end{align*}
\end{assumption}

\noindent \textbf{Algorithm class description.} %Consider a decentralized network represented by a strongly-connected, directed graph with $n$ nodes and a row-stochastic mixing matrix $A=[a_{ij}]_{i,j=1}^n\in\RR^{n\times n}$. 
We focus on decentralized algorithms where each node $i$ maintains a local solution $\bm{x}_i^{(k)}$ at iteration $k$ and communicates using the $A$-protocol defined in (\ref{eqn:a-protocol}). These algorithms also adhere to the linear-spanning property, as defined in~\cite{carmon2020lower,carmon2021lower,yuan2022revisiting,lu2021optimal}. Informally, this property ensures that each local solution $\bm{x}_i^{(k)}$ resides within the linear space spanned by $\bm{x}_i^{(0)}$, its local stochastic gradients, and interactions with neighboring nodes. Upon completion of $K$ iterations, the final output $\hat{\bm{x}}^{(K)}$ can be any variable in $\mathrm{span}(\{\{\bm{x}_i^{(k)}\}_{i=1}^n\}_{k=0}^K)$. Let $\cA_{A}$ denote the set of all algorithms that adhere to partial averaging via mixing matrix $A$ and satisfy the linear-spanning property.

\subsection{Lower Bound}
With $\beta_A$ and $\kappa_A$ at hand, we show, for the first time, that the convergence rate of any non-convex decentralized stochastic first-order algorithm with a row-stochastic mixing matrix is lower-bounded by the following theorem.
\begin{theorem}[Lower bound]
\label{thm: lower-bound}
    For any given $L\geq 0$, $n\geq 2$, $\sigma\geq 0$, and  $\tilde \beta\in[0.01, 1-1/n]$, there exists a set of loss functions $\{f_i\}_{i=1}^n\in \cF_{\Delta,L}$, a set of stochastic gradient oracles in $\cO_{\sigma^2}$, and a row-stochastic  matrix $A\in\RR^{n\times n}$ with 
$\beta_A=\tilde \beta$  and $\ln(\kappa_A)=\Omega(n(1-\beta_A))$,
such that the convergence of any algorithm $\mathscr{A}\in\cA_A$ starting from $\bm{x}_i^{(0)}=\bm{x}^{(0)}, i \in [n]$ with $K$ iterations is lower bounded by 
\vspace{-2pt}
    \begin{align}\label{eq: lower bound}
        \EE\|\nabla f(\hat{ \bm{x}}^{(K)})\|^2\hspace{-0.5mm}=\hspace{-0.5mm}\Omega\hspace{-0.5mm}\left(\hspace{-0.5mm}\frac{ \sigma \sqrt{L\Delta}}{\sqrt{nK}
        } \hspace{-0.5mm}+\hspace{-0.5mm} \frac{(1+\ln(\kappa_A))L\Delta}{(1-\beta_A)K}\hspace{-0.5mm}\right),
    \end{align}
where \(K\), \(\sigma\), \(L\), and \(\Delta\) represent the total number of iterations, the gradient variance, the smoothness parameter of the functions, and the initial gap in the function values, respectively. The proof is in Appendix~\ref{app:lower bound}.
\end{theorem}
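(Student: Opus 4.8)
The plan is to establish the two summands in \eqref{eq: lower bound} through two separate worst-case constructions and then combine them, using that for nonnegative $a,b$ one has $a+b\le 2\max\{a,b\}$; it therefore suffices to lower bound the convergence by each term individually and take the larger construction. The first term, $\sigma\sqrt{L\Delta}/\sqrt{nK}$, is a purely \emph{statistical} obstruction that is insensitive to the network, so I would realize it with a \emph{homogeneous} instance in which every node holds an identical hard function $f_i=f\in\cF_{\Delta,L}$. Then the mixing matrix confers no advantage and the problem collapses to centralized stochastic optimization with an effective mini-batch of size $n$, since the $n$ independent, zero-covariance oracles guaranteed by Assumption~\ref{ass:bounded noise} reduce the per-step variance by a factor $n$. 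Choosing $f$ to be a standard smooth non-convex hard instance with Gaussian gradient noise and invoking the linear-spanning property, the noise floor forces $\EE\|\nabla f(\hat{\bm{x}}^{(K)})\|^2=\Omega(\sigma\sqrt{L\Delta/(nK)})$ by the classical single-machine argument.

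\textbf{The network term} $\frac{(1+\ln\kappa_A)L\Delta}{(1-\beta_A)K}$ is the genuinely new part and I would prove it in the deterministic regime $\sigma=0$. The idea is to design a directed-chain topology whose row-stochastic matrix $A$ simultaneously realizes the prescribed spectral gap $\beta_A=\tilde\beta$ and an \emph{exponentially skewed} Perron vector, yielding $\ln\kappa_A=\Theta(n(1-\beta_A))$ as claimed. Concretely, a path or cycle with geometrically decaying weights makes $\pi_A$ decay geometrically along the chain, so that $\kappa_A=\max(\pi_A)/\min(\pi_A)$ is exponentially large while $1-\beta_A$ is controlled by the weight ratio. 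Onto this chain I would distribute a Nesterov-type zero-chain (progressive) function, so that the $j$-th coordinate of the global objective can only become nonzero after information has propagated from one end of the chain to the relevant node; the linear-spanning property then guarantees that after $K$ iterations only a bounded prefix of coordinates has been activated, and the residual gradient on the untouched coordinates delivers the bound. This mirrors the column-stochastic construction of \citet{liang2023towards}, adapted to the row-stochastic weighting.

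\textbf{The key quantitative step}, and the main obstacle, is to show that the number of $A$-protocol rounds required to activate each successive coordinate scales as $\Theta\!\big(\tfrac{1+\ln\kappa_A}{1-\beta_A}\big)$ rather than merely as $\tfrac{1}{1-\beta_A}$. In the doubly-stochastic case $\pi_A=\mathds{1}_n/n$ and only the spectral gap matters, but under a row-stochastic $A$ the quantity that actually converges is the $\pi_A$-weighted average $\mathds{1}_n\pi_A^\top\vz$, and a node carrying a tiny Perron weight $\min(\pi_A)$ transmits usable signal only after its contribution has been amplified past the skewness-induced attenuation. Making this precise requires tracking the approach of $A^k$ to $A_\infty=\mathds{1}_n\pi_A^\top$: since $A A_\infty=A_\infty A=A_\infty$ one has $A^k-A_\infty=(A-A_\infty)^k$, so $\|A^k-A_\infty\|_{\pi_A}\le\beta_A^k$ controls the rate in the $\pi_A$-norm. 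Converting progress measured in the $\pi_A$-norm back to the Euclidean/coordinate scale on which the hard function lives costs a factor $\sqrt{\kappa_A}$, because $\Pi_A^{1/2}$ has condition number $\sqrt{\max(\pi_A)/\min(\pi_A)}$; requiring $\beta_A^k\sqrt{\kappa_A}\lesssim 1$ before a far node's signal emerges forces $k\gtrsim\frac{\ln\kappa_A}{1-\beta_A}$, and it is precisely this logarithm of the conditioning that produces the $\ln\kappa_A$ contribution. Combined with $\ln\kappa_A=\Omega(n(1-\beta_A))$, this matches the stated factor.

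\textbf{Finally}, taking the maximum of the two instance-specific lower bounds and invoking $a+b\le 2\max\{a,b\}$ recovers \eqref{eq: lower bound}. I expect the delicate part to be the simultaneous control of $\beta_A$ and $\kappa_A$ for the constructed matrix together with the $\pi_A$-norm-to-Euclidean conversion, since the row-stochastic weighting breaks the symmetry that makes the doubly-stochastic analysis clean; verifying that the geometric-weight chain lands exactly in the regime $\ln\kappa_A=\Theta(n(1-\beta_A))$ while keeping the distributed function's smoothness and initial gap within $\cF_{\Delta,L}$ is where most of the technical bookkeeping will concentrate.
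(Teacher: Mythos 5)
Your treatment of the statistical term and your overall architecture (two separate hard instances, a zero-chain function, information propagation limited by the linear-spanning property, then re-expressing the node count in terms of $\kappa_A$ and $\beta_A$) match the paper. The gap is in what you call the key quantitative step. You derive the factor $\frac{1+\ln\kappa_A}{1-\beta_A}$ from a \emph{spectral} mechanism: you argue that a node with tiny Perron weight ``transmits usable signal only after its contribution has been amplified past the skewness-induced attenuation,'' and that requiring $\beta_A^k\sqrt{\kappa_A}\lesssim 1$ forces $k\gtrsim \ln(\kappa_A)/(1-\beta_A)$ per coordinate advance. This is not a valid lower-bound argument for the class $\cA_A$. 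The linear-spanning property constrains only \emph{which} gradients' information has reached which node, i.e.\ it is governed by the support pattern of $A^k$, not by the magnitudes of its entries; an algorithm in $\cA_A$ may take arbitrary linear combinations within its span and can therefore rescale an arbitrarily attenuated received signal and use it immediately. No amount of Perron-weight skewness, by itself, delays the increase of $\prog(\cdot)$ once the relevant entry of $A^k$ is nonzero. Consequently the claimed per-coordinate cost of $\Theta\big(\tfrac{1+\ln\kappa_A}{1-\beta_A}\big)$ rounds does not follow from your mixing-time/conditioning argument.

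The paper obtains the network term purely combinatorially: it places $h_1$ on the first $n/3$ nodes and $h_2$ on the last $n/3$ nodes of a graph in which the directed distance between these two groups is $n/3$, so that each alternation of the zero chain costs $\Omega(n)$ communication rounds, giving $\prog(\hat x^{(k)})\le 3k/n+1$ and hence $\EE\|\nabla f(\hat x^{(K)})\|^2=\Omega(nL\Delta/K)$. The factor $\frac{1+\ln\kappa_A}{1-\beta_A}$ enters only \emph{afterwards}, through the choice of mixing matrix (the transpose of the column-stochastic example of \citet{liang2023towards}, tuned so that $\beta_A=\tilde\beta$ and $\kappa_A=2^{n-1}$), for which $n=\Omega\big((1+\ln\kappa_A)/(1-\beta_A)\big)$; substituting this identity into $\Omega(nL\Delta/K)$ yields the stated bound. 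Your geometric-weight chain could plausibly be rescued along the same lines --- its diameter is $\Theta(n)$, so a graph-distance argument applies, and you would then need to verify that its weights realize $\beta_A=\tilde\beta$ together with $\ln\kappa_A=\Theta(n(1-\beta_A))$ (and that it is strongly connected, which a one-directional path is not without a return edge). But as written, the mechanism you rely on to produce the $\ln\kappa_A$ factor is the wrong one, and the step would fail under scrutiny.
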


% The above lower bound has several important implications:
% \begin{itemize}
%     \item \textbf{Linear speedup.} The first term, \(\sigma/\sqrt{nK}\), dominates the lower bound (\ref{eq: lower bound}) when \(K\) is sufficiently large, indicating that decentralized algorithms with row-stochastic mixing matrices can achieve linear speedup with respect to network size \(n\), similar to their counterparts using doubly-stochastic mixing matrices.
%     \item \textbf{Network topology impact.} The lower bound in (\ref{eq: lower bound}) explicitly highlights the combined impact of the generalized spectral gap \(\beta_A\) and the equilibrium skewness \(\kappa_A\) on decentralized algorithms utilizing row-stochastic mixing matrices. Omitting either metric would provide an incomplete understanding of the algorithmic performance.
% \end{itemize}

\textbf{Linear speedup.} The first term, \(\sigma/\sqrt{nK}\), dominates the lower bound (\ref{eq: lower bound}) when \(K\) is sufficiently large, indicating that decentralized algorithms with row-stochastic mixing matrices could achieve linear speedup with respect to network size \(n\) (\ie, convergence improves as the number of computing nodes $n$ increases).%, similar to their counterparts using doubly-stochastic mixing matrices.

\textbf{Network topology impact.} The lower bound in (\ref{eq: lower bound}) explicitly highlights the combined impact of the generalized spectral gap \(\beta_A\) and the equilibrium skewness \(\kappa_A\) on decentralized algorithms utilizing row-stochastic mixing matrices. Omitting either metric would provide an incomplete understanding of the algorithmic performance.

\textbf{Deterministic scenario.} When the gradient noise \(\sigma = 0\), the established lower bound in (\ref{eq: lower bound}) for stochastic settings simplifies to the first lower bound for deterministic decentralized algorithms with row-stochastic mixing matrices. 

 \section{Achieving Linear Speedup Using Row-stochastic Matrix Alone}\label{sec: limitations in existing algorithms}

The lower bound (\ref{eq: lower bound}) reveals a linear speedup convergence rate of \(\sigma/\sqrt{nK}\) as \(K\) grows large. However, no existing \Rowonly decentralized stochastic algorithm has theoretically achieved this rate, highlighting a significant gap from the lower bound. This section identifies the challenges and presents a novel analysis framework that achieves the first theoretical linear speedup for \Rowonly algorithms.

%The convergence results for \Rowonly algorithms 
\subsection{Pull-DIAG-GT Algorithm}
% \textbf{.} 
We begin by reviewing the state-of-the-art \Rowonly algorithms~\citep{li2019row,FROST-Xinran,ghaderyan2023fast,lu2020nesterov,xin2019FROZEN}, all of which are based on \pulldiag-GT—an adaptation of gradient tracking~\cite{nedic2017achieving,di2016next,xu2015augmented,qu2017harnessing} designed specifically for \Rowonly scenarios.
\begin{subequations}\label{eq: pulldiag-GT}
    \begin{align}
    \vx^{(k+1)}&=A(\vx^{(k)}-\alpha \vy^{(k)})\label{eq: AGT-1},\\
    \vy^{(k+1)}&=A(\vy^{(k)}+D_{k+1}^{-1}\vg^{(k+1)}-D_k^{-1}\vg^{(k)}).\label{eq: AGT-2}
    \end{align}
\end{subequations}
Here, \(D_k = \mathrm{Diag}(A^k), \forall k\ge 1\), $D_0=I_n$. Matrix \(\vx^{(k)}\) denotes the stacked model parameters and \(\vy^{(k)}\) denotes the gradient tracking term.  \(\vg^{(k)}\) denotes the stochastic gradient, defined as \(\vg^{(k)} = \nabla F(\vx^{(k)}; \bxi^{(k)})\), with \(\vy^{(0)} = \vg^{(0)}\). The details of algorithm implementation is provided in Appendix~\ref{app:Implementation Details}. As recursion \eqref{eq: pulldiag-GT} incorporates the \pulldiag protocol \eqref{eq-pull-diag} into gradient tracking, it is termed \pulldiag-GT throughout this paper.

It is noteworthy $D_k^{-1}$ may involve inversion of zeros. Therefore, the following assumption is necessary:
\begin{assumption}[Bounded Diagonals]\label{ass:diag}
    There exists a constant $\theta_A>0$ such that \[ [A^k]_{ii}^{-1}\le\theta_A, \ \forall k\ge 1, \ i\in[n].\]
Remarkably, under Assumption~\ref{ass-weight-matrix}, the existence of $\theta_A$ can be guaranteed if and only if every node has a self-loop.
\end{assumption}

\textbf{Algorithm insight.} 
% In traditional gradient tracking~\citep{xu2015augmented,di2016next,qu2017harnessing,nedic2017achieving}, the term $\vy^{(k)}$ is communicated using a column stochastic matrix. However, 
As shown in $A$-protocol~\eqref{eq:A-protocol}, communication utilizing a row stochastic matrix yields a biased average \({\bm z}\to \pi_A^\top {\bm z}\). In \pulldiag-GT, we can left-multiply \(\pi_A^\top\) on both sides of (\ref{eq: AGT-1}) and observe the following dynamics: 
\begin{align}\label{eq:insight-1}
    \pi_A^\top\vx^{(k+1)}=\pi_A^\top\vx^{(k)}-\alpha\pi_A^\top\vy^{(k)}.
\end{align}
If \(\vy^{(k)}\) represents the stacked stochastic gradients, i.e., \(\vy^{(k)} = \nabla F(\vx^{(k)}; \bxi^{(k)})\), the descent direction \(\pi_A^\top \vy^{(k)}\) deviates from the desired globally averaged gradient \(\mathds{1}_n^\top \vy^{(k)}/n\), preventing \(\pi_A^\top \vx^{(k)}\) from converging to the solution to problem \eqref{prob-general}. To ensure convergence, \pulldiag-GT corrects the descent direction using \(D_k^{-1}\) in \eqref{eq: AGT-2}. Left-multiplying \(\pi_A^\top\) on both sides of \eqref{eq: AGT-2} yields
% If \(\vy^{(k)}\) represents the stacked stochastic gradients, i.e., \(\vy^{(k)} = \nabla F(\vx^{(k)}; \bxi^{(k)})\), the descent direction \(\pi_A^\top \vy^{(k)}\) deviates from the desired globally averaged gradient \(\mathds{1}_n^\top \vy^{(k)}/n\), causing \(\pi_A^\top \vx^{(k)}\) to fail in converging to the solution of problem \eqref{prob-general}. To ensure converging to the solution, 
% \pulldiag-GT corrects the gradients with $D_k^{-1}$ in \eqref{eq: AGT-2} 
% and provides the following gradient tracking equation:
\begin{align*} 
    \quad\pi_A^\top \vy^{(k+1)}-\pi_A^\top D_{k+1}^{-1}\vg^{(k+1)} {=}&~\pi_A^\top \vy^{(k)}-\pi_A^\top D_k^{-1}\vg^{(k)}\nonumber\\
    =\cdots=\pi_A^\top \vy^{(0)}-\pi_A^\top \vg^{(0)}\overset{(a)}{=}&~0,
\end{align*}
where equality (a) holds due to $\vy^{(0)} = \vg^{(0)}$. This implies
\begin{align}\label{eq:insight-2}
\pi_A^\top\vy^{(k)}=\pi_A^TD_k^{-1}\vg^{(k)}\overset{(b)}{\approx} \one^\top \vg^{(k)}\triangleq n\bar{\bm g}^{(k)},
\end{align}
where (b) holds because \(D_k = \mathrm{Diag}(A^k) \approx \mathrm{diag}(\pi_A)\), and $\bar{\bm g}^{(k)} = (1/n)\mathds{1}_n^\top \vg^{(k)}$. The combined dynamics of \eqref{eq:insight-1} and \eqref{eq:insight-2} ensure that \pulldiag-GT converges along the globally averaged gradient, ultimately solving problem \eqref{prob-general}.  

\textbf{Challenges in establishing linear speedup.} Although the rationale behind \pulldiag-GT's convergence to the desired solution is clear, to the best of our knowledge, no existing analysis has established its linear speedup convergence with respect to the network size \(n\). This subsection highlights the key challenges involved.

We first define two error terms to facilitate analysis:
\begin{itemize}

    \item \textbf{Consensus error.} Let $\pi_A^\top \vx^{(k)}$ be the centroid variable for $\vx^{(k)}$. We introduce $\norm{\vx^{(k)}-\one\pi_A^\top \vx^{(k)}}^2$ as the consensus error to gauge the difference between each local variable $\bm x_i^{(k)}$ to the centroid $\pi_A^\top \vx^{(k)}$. 
    
    \item \textbf{Descent deviation.} We define \(\norm{\pi_A^\top \vy^{(k)} - n\bar{\bm{g}}^{(k)}}^2\) as the descent deviation, measuring the discrepancy between the \pulldiag-GT descent direction and the desired globally averaged stochastic gradient.

\end{itemize}

If the aforementioned two error terms are guaranteed to diminish to zero, \pulldiag-GT enables each $\bm{x}_i^{(k)}$ to converge to centroid ${\bm w}^{(k)} = \pi_A^\top \vx^{(k)}$, and the update of ${\bm w}^{(k)}$ asymptotically approximate centralized parallel SGD:
\begin{align}\label{eq-csgd}
    {\bm w}^{(k+1)} = {\bm w}^{(k)} - n \alpha \bar{{{\bm g}}}_w^{(k)},
\end{align}
where $\bar{{\bm g}}_w^{(k)} = (1/n)\sum_{i=1}^n \nabla F({\bm w}^{(k)};\boldsymbol{\xi}_i^{(k)})$ is the globally averaged gradient over centroid. This ensures ${\bm w}^{(k)}$ to converge to the desired solution to problem \eqref{prob-general}.

Linear speedup analysis in distributed stochastic optimization relies heavily on the descent structure aligned with the globally averaged stochastic gradient \cite{yu2019linear, xin2019SAB, koloskova2020unified, yang2021achieving}. When each local stochastic gradient \(\bm{g}_i\) introduces mean-square gradient noise \(\sigma^2\), the globally averaged gradient \(\bar{\bm g}\) benefits from reduced noise \(\sigma^2/n\), forming the foundation for linear speedup. Gradient tracking with doubly or column-stochastic mixing matrices preserves the globally averaged gradient descent direction \(\bar{\bm g}\), enabling well-established linear speedup convergence \cite{koloskova2020unified, lu2021optimal, assran2019stochastic, kungurtsev2023decentralized,liang2023towards}. In contrast, \pulldiag-GT suffers from an additional descent deviation between \(\pi_A^\top \bm{y}\) and \(\bar{\bm g}\), making existing linear speedup analyses inapplicable and necessitating new techniques to address this limitation.

\subsection{Achieving linear speedup in \textbf{Pull-Diag-GT}}

%\ky{This subsection contains too much  details and lacks a clear logical flow between the lemmas. Also, the upper bound for descent deviation is not clear.}

This subsection presents a new analytical framework to establish the linear speedup rate for \pulldiag-GT.

\textbf{Descent lemma.} Our analysis begins with a descent lemma.
\begin{lemma}[Descent lemma]\label{lem(main):1}  Under Assumptions~\ref{ass-weight-matrix}-\ref{ass:diag}, when $\alpha\le \frac{1}{2nL}$, for any $k\ge 0$ we have
    \begin{align}\label{eq:descent lemma}
        &\frac{n\alpha}{2}\norm{\nabla f(\bm{w}^{(k)})}^2\le 
        f(\bm{w}^{(k)})-\EE[f(\bm{w}^{(k+1)})|\cF_k]\nonumber\\
        &\,+\alpha L^2\underbrace{\norm{\Delta_x^{(k)}}_F^2}_{\text{consensus error}}
        +\frac{\alpha}{n}\underbrace{\norm{\EE[\pi_A^\top\vy^{(k)}-n\bar{\bm{g}}^{(k)}|\cF_k]}^2}_{\text{descent deviation}}\nonumber\\
        &\,+\frac{\alpha^2 L \sigma^2}{2}d_{k}-\frac{\alpha}{4n}\norm{\pi_A^\top D_k^{-1}\nabla f(\vx^{(k)})}^2
    \end{align}
where $\bm{w}^{(k)}:=\pi_A^\top \vx^{(k)}$ is the centroid variable, $d_k:=\sum_{j=1}^n (\frac{[\pi_A]_j}{[D_k]_j})^2$ is a constant. % that converges to $n$ as $k\to\infty$.
$\Delta_x^{(k)}:=\vx^{(k)}-\one\pi_A^\top \vx^{(k)}$, and $\bar{\bm{g}}^{(k)} = (1/n)\mathds{1}_n^\top \nabla F(\vx^{(k)};\boldsymbol{\xi}^{(k)})$.
\end{lemma}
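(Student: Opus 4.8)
The plan is to run a standard $L$-smoothness descent argument on the centroid sequence $\bm{w}^{(k)} := \pi_A^\top \vx^{(k)}$, routing the row-stochastic bias entirely through the consensus error and descent deviation. First I would establish the exact centroid recursion: left-multiplying \eqref{eq: AGT-1} by $\pi_A^\top$ and using $\pi_A^\top A = \pi_A^\top$ (Proposition~\ref{prop-PF}) gives $\bm{w}^{(k+1)} = \bm{w}^{(k)} - \alpha\,\pi_A^\top\vy^{(k)}$, and the telescoping identity \eqref{eq:insight-2} rewrites this as $\bm{w}^{(k+1)} = \bm{w}^{(k)} - \alpha\,\pi_A^\top D_k^{-1}\vg^{(k)}$. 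This is the crucial reduction: although $\vy^{(k)}$ is not $\cF_k$-measurable, its $\pi_A$-projection equals the single-step quantity $\pi_A^\top D_k^{-1}\vg^{(k)}$, whose conditional mean is $\bm{v}^{(k)} := \pi_A^\top D_k^{-1}\nabla f(\vx^{(k)}) = \EE[\pi_A^\top\vy^{(k)}\mid\cF_k]$.

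Next I would apply the descent inequality $f(\bm{w}^{(k+1)}) \le f(\bm{w}^{(k)}) - \alpha\ip{\nabla f(\bm{w}^{(k)}),\, \pi_A^\top D_k^{-1}\vg^{(k)}} + \tfrac{L\alpha^2}{2}\norm{\pi_A^\top D_k^{-1}\vg^{(k)}}^2$ (valid since $f$ is $L$-smooth as an average of $L$-smooth $f_i$) and take $\EE[\cdot\mid\cF_k]$. For the quadratic term I would use the bias--variance split $\EE[\norm{\pi_A^\top D_k^{-1}\vg^{(k)}}^2\mid\cF_k] = \norm{\bm{v}^{(k)}}^2 + \EE[\norm{\pi_A^\top D_k^{-1}(\vg^{(k)}-\nabla f(\vx^{(k)}))}^2\mid\cF_k]$. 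Writing the noise contribution as $\sum_j [\pi_A]_j[D_k^{-1}]_{jj}\bm{n}_j^{(k)}$ with $\bm{n}_j^{(k)}$ the per-node noise, the zero cross-covariance in Assumption~\ref{ass:bounded noise} annihilates all off-diagonal terms, leaving $\le \sigma^2\sum_j([\pi_A]_j/[D_k]_j)^2 = \sigma^2 d_k$; this produces exactly the $\tfrac{\alpha^2 L\sigma^2}{2}d_k$ term, and $d_k$ is $\cF_k$-measurable since $D_k=\mathrm{Diag}(A^k)$ is deterministic.

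The heart of the argument is the inner product $-\alpha\ip{\nabla f(\bm{w}^{(k)}),\bm{v}^{(k)}}$, which I would expand with the rescaled polarization identity $-\ip{a,b} = \tfrac12\norm{\sqrt{n}\,a - b/\sqrt{n}}^2 - \tfrac{n}{2}\norm{a}^2 - \tfrac{1}{2n}\norm{b}^2$, taken with $a = \nabla f(\bm{w}^{(k)})$ and $b=\bm{v}^{(k)}$. This rescaling is chosen precisely so that it yields the \emph{full} $-\tfrac{n\alpha}{2}\norm{\nabla f(\bm{w}^{(k)})}^2$ (matching the target left-hand side), a negative term $-\tfrac{\alpha}{2n}\norm{\bm{v}^{(k)}}^2$, and a cross residual $\tfrac{\alpha}{2n}\norm{n\nabla f(\bm{w}^{(k)}) - \bm{v}^{(k)}}^2$. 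I would then split $n\nabla f(\bm{w}^{(k)}) - \bm{v}^{(k)} = \sum_i(\nabla f_i(\bm{w}^{(k)})-\nabla f_i(\vx_i^{(k)})) - \bm{e}^{(k)}$, where $\bm{e}^{(k)} = (\pi_A^\top D_k^{-1}-\one^\top)\nabla f(\vx^{(k)}) = \EE[\pi_A^\top\vy^{(k)} - n\bar{\bm{g}}^{(k)}\mid\cF_k]$ is exactly the descent deviation. Bounding the first piece by $nL^2\fnorm{\Delta_x^{(k)}}^2$ via $L$-smoothness together with $\norm{\sum_i v_i}^2 \le n\sum_i\norm{v_i}^2$, and applying $\norm{a+b}^2 \le 2\norm{a}^2+2\norm{b}^2$, converts the residual into the $\alpha L^2\fnorm{\Delta_x^{(k)}}^2$ and $\tfrac{\alpha}{n}\norm{\bm{e}^{(k)}}^2$ terms.

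Finally I would collect the two $\norm{\bm{v}^{(k)}}^2$ contributions, namely $-\tfrac{\alpha}{2n}$ from the polarization and $+\tfrac{L\alpha^2}{2}$ from the quadratic term, into $-\tfrac{\alpha}{2n}(1 - nL\alpha)\norm{\bm{v}^{(k)}}^2$, and invoke the step-size condition $\alpha\le\tfrac{1}{2nL}$ (so $nL\alpha\le\tfrac12$) to bound this by $-\tfrac{\alpha}{4n}\norm{\bm{v}^{(k)}}^2$, matching the last term of \eqref{eq:descent lemma}; rearranging completes the proof. I expect the main obstacle to be the bookkeeping in the cross-term split: choosing the polarization scaling so that the \emph{entire} $\tfrac{n\alpha}{2}\norm{\nabla f(\bm{w}^{(k)})}^2$ is retained on the left (rather than a fraction of it) while the descent deviation $\bm{e}^{(k)}$ is isolated cleanly from the consensus error, and verifying that the linear-independence of noise is exactly what collapses the variance to $\sigma^2 d_k$ with no surviving cross terms.
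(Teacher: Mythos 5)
Your proposal is correct and follows essentially the same route as the paper's proof: the centroid recursion $\bm{w}^{(k+1)}=\bm{w}^{(k)}-\alpha\,\pi_A^\top\vy^{(k)}$ combined with the telescoping identity $\pi_A^\top\vy^{(k)}=\pi_A^\top D_k^{-1}\vg^{(k)}$, the $L$-smoothness descent inequality with a bias--variance split whose variance collapses to $\sigma^2 d_k$ via the independent-noise assumption, the same $n$-rescaled polarization identity isolating the full $-\tfrac{n\alpha}{2}\|\nabla f(\bm{w}^{(k)})\|^2$ and $-\tfrac{\alpha}{2n}\|\pi_A^\top D_k^{-1}\nabla f(\vx^{(k)})\|^2$, the same factor-of-two split of the cross residual into consensus error and descent deviation, and the same use of $\alpha\le\tfrac{1}{2nL}$ to retain $-\tfrac{\alpha}{4n}\|\pi_A^\top D_k^{-1}\nabla f(\vx^{(k)})\|^2$. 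No gaps.
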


As anticipated, the descent lemma incorporates both the consensus error and the descent deviation. Due to the conditional expectation operation, the descent deviation in the lemma appears as \(\norm{\EE[\pi_A^\top\vy^{(k)} - n\bar{\bm{g}}^{(k)} \mid \cF_k]}^2\). In contrast, the descent lemma for gradient tracking using doubly or column-stochastic mixing matrices ensures \(\bar{\bm y}^{(k)} = \bar{\bm g}^{(k)}\), thereby accounting solely for the consensus error.

\textbf{Estimate descent deviation.}
According to \eqref{eq:insight-2}, we have $\pi_A^\top\vy^{(k)}=\pi_A^TD_k^{-1}\vg^{(k)}$. This implies that
\[\norm{\EE[\pi_A^\top\vy^{(k)}-n\bar{\bm{g}}^{(k)}|\cF_k]}=\norm{(\pi_A^\top D_k^{-1}-\one^\top)\nabla f (\vx^{(k)})}.\]
Therefore, we can use the following lemma to provide an estimate for the descent deviation.

\begin{lemma}\label{lem(main):descent deviation}
Under Assumptions~\ref{ass-weight-matrix}, \ref{ass:smooth} and \ref{ass:diag}, for all $k \ge 1$,
    \begin{align*}
        &\quad\norm{(\pi_A^\top D_k^{-1}-\one^\top)\nabla f (\vx^{(k)})}^2 \\
        &\le 2n\kappa_A\theta_A^2\beta_A^{2k}\left(L^2\norm{\Delta_x^{(k)}}_F^2+2nL((f(\bm{w}^{(k)})-f^*)\right),
    \end{align*}
where $f^*:=n^{-1}\sum_{i=1}^n f_i^*$.
\end{lemma}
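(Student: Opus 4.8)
The plan is to reduce the claim to an entrywise estimate on the row vector $r^\top := \pi_A^\top D_k^{-1} - \one^\top \in \RR^{1\times n}$, and then to peel off $\nabla f(\vx^{(k)})$ by Cauchy--Schwarz and smoothness. First I would record two elementary identities: $\one^\top = \pi_A^\top \Pi_A^{-1}$ (since $[\pi_A^\top\Pi_A^{-1}]_j = [\pi_A]_j/[\pi_A]_j = 1$), and $[A_\infty]_{jj} = [\one\pi_A^\top]_{jj} = [\pi_A]_j = [\Pi_A]_{jj}$. With $D_k = \mathrm{Diag}(A^k)$, the $j$-th entry of $r^\top$ becomes
\[ r_j = \frac{[\pi_A]_j}{[A^k]_{jj}} - 1 = \frac{[A_\infty]_{jj} - [A^k]_{jj}}{[A^k]_{jj}} = -\,\frac{[A^k - A_\infty]_{jj}}{[A^k]_{jj}}, \]
so everything hinges on how fast the diagonal of $A^k$ converges to that of $A_\infty$.

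Next I would bound $|[A^k - A_\infty]_{jj}|$ by the generalized spectral gap. Using $A\one = \one$, $\pi_A^\top A = \pi_A^\top$ and $\pi_A^\top\one = 1$, one checks $A A_\infty = A_\infty A = A_\infty^2 = A_\infty$, hence $(A - A_\infty)^k = A^k - A_\infty$ for all $k\ge 1$. Submultiplicativity of the induced $\pi_A$-norm then gives $\norm{A^k - A_\infty}_{\pi_A} \le \norm{A - A_\infty}_{\pi_A}^k = \beta_A^k$. Converting this to an entrywise statement through $\norm{A^k - A_\infty}_2 \le \sqrt{\kappa_A}\,\norm{A^k-A_\infty}_{\pi_A}$ (since $\norm{\Pi_A^{1/2}}_2 = \max(\pi_A)^{1/2}$ and $\norm{\Pi_A^{-1/2}}_2 = \min(\pi_A)^{-1/2}$), and using $|[B]_{jj}|\le\norm{B}_2$, yields $|[A^k-A_\infty]_{jj}| \le \sqrt{\kappa_A}\,\beta_A^k$. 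Combined with Assumption~\ref{ass:diag}, i.e. $[A^k]_{jj}^{-1}\le\theta_A$, this delivers the key entrywise bound $|r_j| \le \theta_A\sqrt{\kappa_A}\,\beta_A^k$ for every $j\in[n]$.

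With $\max_j|r_j| \le \theta_A\sqrt{\kappa_A}\,\beta_A^k$ in hand, I would peel off the gradient block. Writing the $j$-th row of $\nabla f(\vx^{(k)})$ as $\nabla f_j(\bm x_j^{(k)})^\top$ and applying Cauchy--Schwarz in the form $\norm{\sum_j r_j v_j}^2 \le (\sum_j|r_j|)(\sum_j |r_j|\norm{v_j}^2)$ gives
\[ \norm{r^\top\nabla f(\vx^{(k)})}^2 \le n\,\theta_A^2\kappa_A\beta_A^{2k}\,\fnorm{\nabla f(\vx^{(k)})}^2. \]
It then remains to decompose $\fnorm{\nabla f(\vx^{(k)})}^2 = \sum_j\norm{\nabla f_j(\bm x_j^{(k)})}^2$ around the centroid $\bm w^{(k)}$: splitting $\nabla f_j(\bm x_j^{(k)}) = (\nabla f_j(\bm x_j^{(k)}) - \nabla f_j(\bm w^{(k)})) + \nabla f_j(\bm w^{(k)})$, then using $L$-smoothness on the first piece and the standard consequence $\norm{\nabla f_j(\bm w^{(k)})}^2 \le 2L(f_j(\bm w^{(k)}) - f_j^*)$ with $f_j^*:=\inf_{\bm x}f_j(\bm x)$ on the second, and summing over $j$, yields $\fnorm{\nabla f(\vx^{(k)})}^2 \le 2L^2\fnorm{\Delta_x^{(k)}}^2 + 4nL(f(\bm w^{(k)}) - f^*)$. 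Substituting this into the previous display produces exactly $2n\kappa_A\theta_A^2\beta_A^{2k}(L^2\fnorm{\Delta_x^{(k)}}^2 + 2nL(f(\bm w^{(k)})-f^*))$.

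The main obstacle is the entrywise control of $\pi_A^\top D_k^{-1} - \one^\top$, i.e. quantifying the rate at which the inverse diagonal $D_k^{-1}$ approaches $\Pi_A^{-1}$. The subtlety is that the inversion can amplify small diagonal entries of $A^k$, which is precisely why Assumption~\ref{ass:diag} (the bound $\theta_A$) is indispensable; the geometric decay then comes entirely from the clean algebraic fact $(A-A_\infty)^k = A^k - A_\infty$ together with the contraction in the $\pi_A$-norm. I would also be careful with the norm conversion above: a sharper route observes that the diagonal of $\Pi_A^{1/2}(A^k-A_\infty)\Pi_A^{-1/2}$ equals that of $A^k - A_\infty$, giving $|[A^k-A_\infty]_{jj}| \le \beta_A^k$ directly and hence a $\kappa_A$-free estimate; passing instead through the Euclidean norm is what introduces the (harmless, since $\kappa_A\ge 1$) factor $\kappa_A$ in the stated inequality. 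The smoothness decomposition is then routine, relying only on $f_j$ being bounded below, which holds throughout the function class $\cF_{\Delta,L}$.
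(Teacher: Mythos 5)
Your proposal is correct and follows essentially the same route as the paper: both bound the deviation row vector $\pi_A^\top D_k^{-1}-\one^\top$ by $\theta_A\sqrt{n\kappa_A}\,\beta_A^{k}$ using $(A-A_\infty)^k=A^k-A_\infty$, the $\pi_A$-norm contraction, and Assumption~\ref{ass:diag}, then peel off $\nabla f(\vx^{(k)})$ by Cauchy--Schwarz and apply the standard smoothness decomposition $\fnorm{\nabla f(\vx^{(k)})}^2\le 2L^2\fnorm{\Delta_x^{(k)}}^2+4nL(f(\bm w^{(k)})-f^*)$. The only cosmetic difference is that you track the entrywise bound $\max_j|r_j|\le\theta_A\sqrt{\kappa_A}\,\beta_A^k$ and pick up the factor $n$ in the Cauchy--Schwarz step, whereas the paper bounds the Euclidean norm of the whole row vector via $\norm{A^k-A_\infty}_F$; your side remark that conjugation by $\Pi_A^{1/2}$ preserves the diagonal, giving a $\kappa_A$-free entrywise estimate, is a valid sharpening.
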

Lemma~\ref{lem(main):descent deviation} employs $f(\bm{w}^{(k)})-f^*$ to establish an upper bound on the norm of the stacked gradients. In many instances, this approach can negatively impact the recursive nature of the descent lemma. Nevertheless, in this particular situation, we can effectively accommodate it due to its $\cO(\beta_A^{2k})$ coefficient. Details are provided in Lemma~\ref{lem(app):absorb extra term} in Appendix~\ref{app:pulldiagGT}. 

\textbf{Estimate consensus error.}
We present a novel method to establish bounds on the consensus error, which is based on converting $\Delta_x$ into rolling sums.

\begin{lemma}[Informal]\label{lem(main):5} Denote $\Delta_y^{(k)}=\vy^{(k)}-\one\pi_A^\top\vy^{(k)}$, $\Delta_g^{(-1)}=\vg^{(0)}$, $\Delta_g^{(i)}=\vg^{(i+1)}-\vg^{(i)}, \forall i \ge 0$. For any $k\ge 0$, it follows that
    \begin{align*}
        \Delta_x^{(k+1)}&=-\alpha \sum_{i=0}^{k}(A-A_\infty)^{k+1-i}\Delta_y^{(i)},\\
        \Delta_y^{(k+1)}&=\sum_{i=-1}^k\left((A-A_\infty)^{k+1-i}D_{i+1}^{-1}+\cO(k\beta_A^k)\right)\Delta_g^{(i)}.%\sum_{i=0}^k\Delta_g^{(i-1)}
\end{align*}
\end{lemma}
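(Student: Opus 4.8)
The plan is to derive both recursions by unrolling the \pulldiag-GT updates \eqref{eq: pulldiag-GT} and projecting onto the consensus-complement subspace via the operator $A - A_\infty$, where $A_\infty = \one\pi_A^\top$. The crucial algebraic fact I would establish first is that $A_\infty A = A_\infty$ and $A A_\infty = A_\infty$, so that $A_\infty$ acts as a projection and $(A - A_\infty)^{m} = A^m - A_\infty$ for all $m\ge 1$; this lets me replace powers of $A$ acting on deviation terms with powers of the contraction $A-A_\infty$, whose $\pi_A$-norm is $\beta_A<1$. I would also record that left-multiplying any update by $\pi_A^\top$ annihilates the $A-A_\infty$ contributions, which is what makes $\Delta_x = (I-A_\infty)\vx$ a clean deviation variable.

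For the $\Delta_x$ recursion I first apply $I - A_\infty$ to \eqref{eq: AGT-1}. Writing $\Delta_x^{(k+1)} = (I-A_\infty)\vx^{(k+1)} = (I-A_\infty)A(\vx^{(k)} - \alpha\vy^{(k)})$ and using $(I-A_\infty)A = A - A_\infty = (A-A_\infty)(I-A_\infty)$, I obtain a one-step recursion $\Delta_x^{(k+1)} = (A-A_\infty)\Delta_x^{(k)} - \alpha(A-A_\infty)\Delta_y^{(k)}$. Unrolling this from the initialization $\Delta_x^{(0)}=0$ (the nodes start at a common point $\bm{x}^{(0)}$) telescopes the consensus part and leaves exactly the stated sum $-\alpha\sum_{i=0}^{k}(A-A_\infty)^{k+1-i}\Delta_y^{(i)}$. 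I would verify the index bookkeeping carefully, since the shift between $\Delta_x^{(k+1)}$ and the $\Delta_y^{(i)}$ summands is where off-by-one errors creep in.

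The $\Delta_y$ recursion is the harder half, because the gradient-correction term carries the time-varying factors $D_{k+1}^{-1}$ and $D_k^{-1}$ rather than a single fixed operator. Applying $I-A_\infty$ to \eqref{eq: AGT-2} gives $\Delta_y^{(k+1)} = (A-A_\infty)\bigl(\Delta_y^{(k)} + (I-A_\infty)(D_{k+1}^{-1}\vg^{(k+1)} - D_k^{-1}\vg^{(k)})\bigr)$, and the plan is to unroll this telescoping increment so that it becomes a sum over the gradient differences $\Delta_g^{(i)}=\vg^{(i+1)}-\vg^{(i)}$ with $\Delta_g^{(-1)}=\vg^{(0)}$. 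The idea is to perform an Abel-type (summation-by-parts) rearrangement: I would regroup $\sum_i (A-A_\infty)^{k+1-i} D_{i+1}^{-1}\Delta_g^{(i)}$ as the principal term, and absorb the mismatch between the ideal fixed diagonal $\mathrm{diag}(\pi_A)^{-1}$ and the actual $D_{i+1}^{-1}$ into an error. Since $D_{i+1} = \mathrm{Diag}(A^{i+1}) \to \mathrm{diag}(\pi_A)$ geometrically, Proposition~\ref{prop-PF} and Assumption~\ref{ass:diag} give $\|D_{i+1}^{-1} - \mathrm{diag}(\pi_A)^{-1}\| = \cO(\beta_A^{i+1})$; combined with the $\cO(\beta_A^{k-i})$ decay of $(A-A_\infty)^{k+1-i}$, the accumulated discrepancy over $i$ produces the claimed $\cO(k\beta_A^{k})$ remainder (the factor $k$ being the count of summands, each contributing a $\beta_A^{k}$-scale error after the two geometric decays combine).

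The main obstacle is controlling this $\cO(k\beta_A^k)$ error term rigorously rather than heuristically: I must bound $\sum_{i=-1}^{k}\|(A-A_\infty)^{k+1-i}\|_{\pi_A}\,\|D_{i+1}^{-1}-\mathrm{diag}(\pi_A)^{-1}\|$ uniformly and show the geometric factors in $i$ and $k-i$ interact to yield a single $k\beta_A^k$ bound (up to $\kappa_A$ and $\theta_A$ constants from converting between the $\pi_A$-norm and the Euclidean norm). Because Lemma~\ref{lem(main):5} is stated informally, in the formal proof I would track these constants explicitly and pin down whether the $\cO(\cdot)$ hides a dependence on $\theta_A$ and $\kappa_A$; once the error bound is in hand, substituting both recursions into the descent deviation estimate of Lemma~\ref{lem(main):descent deviation} and the descent lemma closes the consensus-error control needed downstream.
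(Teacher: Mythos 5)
Your treatment of $\Delta_x^{(k+1)}$ and your overall strategy (project the updates with $I-A_\infty$, use $A_\infty A=AA_\infty=A_\infty^2=A_\infty$ so that $(A-A_\infty)^m=A^m-A_\infty$, unroll from $\Delta_x^{(0)}=0$) coincide with the paper's argument, which is phrased as an induction but amounts to the same unrolling. The one place your plan goes off track is the identification of where the $\cO(k\beta_A^k)$ error in the $\Delta_y$ recursion comes from. After unrolling, the quantity to be rearranged is $\sum_{j=0}^{k}(A-A_\infty)^{k+1-j}\bigl(D_{j+1}^{-1}\vg^{(j+1)}-D_j^{-1}\vg^{(j)}\bigr)$ plus the initial term $(A-A_\infty)^{k+1}\vg^{(0)}$. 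The discrete product rule gives $D_{j+1}^{-1}\vg^{(j+1)}-D_j^{-1}\vg^{(j)}=D_{j+1}^{-1}\Delta_g^{(j)}+(D_{j+1}^{-1}-D_j^{-1})\vg^{(j)}$, so once the principal term $\sum_j(A-A_\infty)^{k+1-j}D_{j+1}^{-1}\Delta_g^{(j)}$ is extracted, the residual is $\sum_j(A-A_\infty)^{k+1-j}(D_{j+1}^{-1}-D_j^{-1})\vg^{(j)}$: consecutive differences of $D^{-1}$ hitting \emph{full} gradients, not the mismatch $D_{i+1}^{-1}-\Pi_A^{-1}$ hitting $\Delta_g^{(i)}$ as you describe. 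To put this residual in the claimed form $\sum_{i}\cO(k\beta_A^k)\Delta_g^{(i)}$ you must additionally telescope $\vg^{(j)}=\sum_{i=-1}^{j-1}\Delta_g^{(i)}$ (using your own convention $\Delta_g^{(-1)}=\vg^{(0)}$) and swap the order of summation; the coefficient attached to $\Delta_g^{(i-1)}$ is then $\sum_{l=i}^{k}(A-A_\infty)^{k+1-l}(D_{l+1}^{-1}-D_l^{-1})$, and it is the at most $k+1$ terms of this inner sum, each of size $\cO(\beta_A^{k+1-l}\cdot\beta_A^{l})=\cO(\beta_A^{k+1})$, that produce the $k\beta_A^{k}$ coefficient. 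This double sum is exactly the second term in the paper's formal version of the lemma, and the telescoping-and-reindexing step that produces it is the piece missing from your plan; following your stated decomposition literally leaves an unconverted remainder involving $\vg^{(j)}$ rather than gradient increments, which the downstream rolling-sum machinery cannot consume.

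Your quantitative intuition is otherwise right: $\norm{D_{l+1}^{-1}-D_l^{-1}}_2=\cO(\beta_A^{l})$ does follow from the geometric convergence $D_l^{-1}\to\Pi_A^{-1}$ (the paper's lemma on convergence of the diagonal matrix, with the $\theta_A$, $\kappa_A$ and $n$ dependence you anticipate), and your unrolling correctly retains the initial-condition term $(A-A_\infty)^{k+1}\vg^{(0)}$ as the $i=-1$ contribution, which is why the informal statement's sum starts at $i=-1$. The fix is therefore local: replace the ``$\Pi_A^{-1}$ versus $D_{i+1}^{-1}$'' decomposition with the product-rule-plus-telescoping rearrangement above, and the rest of your argument goes through as in the paper.
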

The following lemma tells us how to estimate rolling sums:

 \begin{lemma}[Rolling sum]\label{lem(main):rolling sum}
For $A\in\RR^{n\times n}$ satisfying Assumption~\ref{ass-weight-matrix}, the following estimation holds for any matrices $\Delta^{(i)}\in \RR^{n\times d}$ and for any $K\ge 0$:
    \begin{align*} \sum_{k=0}^K\norm{\sum_{i=0}^k(A-A_\infty)^{k+1-i}\Delta^{(i)}}_F^2\le s_A^2\sum_{i=0}^K\norm{\Delta^{(i)}}_F^2,
    \end{align*}
    where $s_A$ is a constant decided by $A$.
 \end{lemma}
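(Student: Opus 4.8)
The plan is to pass to the weighted norm $\norm{\cdot}_{\pi_A}$, in which $B := A - A_\infty$ is a strict contraction, carry out a discrete-convolution estimate there, and only then translate back to the Frobenius norm at a cost controlled by $\kappa_A$. Recall that the induced matrix norm $\norm{W}_{\pi_A} = \norm{\Pi_A^{1/2} W \Pi_A^{-1/2}}_2$ is submultiplicative, so $\norm{B^m}_{\pi_A} \le \norm{B}_{\pi_A}^m = \beta_A^m$. Consequently, for the weighted Frobenius norm $\norm{\vz}_{\pi_A} := \fnorm{\Pi_A^{1/2}\vz}$ on $\RR^{n\times d}$, we obtain the contraction $\norm{B^{m}\Delta^{(i)}}_{\pi_A} \le \beta_A^{m}\norm{\Delta^{(i)}}_{\pi_A}$. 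This is the one place where working in the $\pi_A$-norm rather than $\fnorm{\cdot}$ is essential, since $B$ need not contract in the Frobenius norm.

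First I would apply the triangle inequality to the inner sum $S_k := \sum_{i=0}^k B^{k+1-i}\Delta^{(i)}$, giving $\norm{S_k}_{\pi_A} \le \sum_{i=0}^k \beta_A^{k+1-i}\norm{\Delta^{(i)}}_{\pi_A}$. Writing $t_i := \norm{\Delta^{(i)}}_{\pi_A}$ and splitting each weight as $\beta_A^{k+1-i} = \beta_A^{(k+1-i)/2}\cdot\beta_A^{(k+1-i)/2}$, Cauchy--Schwarz yields
\[
\norm{S_k}_{\pi_A}^2 \le \Big(\sum_{i=0}^k \beta_A^{k+1-i}\Big)\Big(\sum_{i=0}^k \beta_A^{k+1-i} t_i^2\Big) \le \frac{1}{1-\beta_A}\sum_{i=0}^k \beta_A^{k+1-i} t_i^2 .
\]
Summing over $k$ and exchanging the order of summation,
\[
\sum_{k=0}^K \norm{S_k}_{\pi_A}^2 \le \frac{1}{1-\beta_A}\sum_{i=0}^K t_i^2 \sum_{k=i}^{K} \beta_A^{k+1-i} \le \frac{\beta_A}{(1-\beta_A)^2}\sum_{i=0}^K t_i^2 ,
\]
where the inner geometric sum is bounded by $\beta_A/(1-\beta_A)$. (Equivalently, this is discrete Young's inequality applied to the $\ell^1$ kernel $m\mapsto\beta_A^m$.)

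Finally I would convert between the two norms using the elementary equivalence $\min(\pi_A)\,\fnorm{\vz}^2 \le \norm{\vz}_{\pi_A}^2 \le \max(\pi_A)\,\fnorm{\vz}^2$, which follows at once from $\norm{\vz}_{\pi_A}^2 = \sum_j [\pi_A]_j \norm{[\vz]_j}^2$ over the rows $[\vz]_j$. Applying the lower bound on the left-hand side and the upper bound on the right-hand side gives
\[
\sum_{k=0}^K \fnorm{S_k}^2 \le \frac{1}{\min(\pi_A)}\sum_{k=0}^K \norm{S_k}_{\pi_A}^2 \le \frac{\kappa_A\,\beta_A}{(1-\beta_A)^2}\sum_{i=0}^K \fnorm{\Delta^{(i)}}^2 ,
\]
so the claim holds with $s_A^2 = \kappa_A\,\beta_A/(1-\beta_A)^2$, a constant depending only on $A$. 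The only genuine subtlety is the first step: the decay $\beta_A<1$ is available solely in the $\pi_A$-norm, so the geometric contraction must be harvested there, and the back-and-forth conversion between $\norm{\cdot}_{\pi_A}$ and $\fnorm{\cdot}$ is precisely what injects the equilibrium skewness $\kappa_A$ into $s_A$, mirroring how $\kappa_A$ surfaces in the other lemmas and in the lower bound.
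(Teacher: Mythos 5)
Your proof is correct and fully establishes the lemma as stated, but it takes a genuinely different route from the paper's and arrives at a quantitatively different constant. You keep the whole convolution estimate in the $\pi_A$-weighted norm, where $\norm{(A-A_\infty)^m}_{\pi_A}\le\beta_A^m$, and pay the norm-equivalence cost once at the end, landing on $s_A^2=\kappa_A\beta_A/(1-\beta_A)^2$. The paper instead converts the contraction into the unweighted spectral norm first, $\norm{A^i-A_\infty}_2\le\sqrt{\kappa_A}\,\beta_A^i$, bounds the kernel's $\ell^1$ mass $\sum_{i=0}^k\norm{A^{k+1-i}-A_\infty}_2$ by splitting at the index $p$ where this geometric bound drops below the uniform bound $M_A:=\max_{k\ge1}\norm{A^k-A_\infty}_2$, and then runs the same weighted Cauchy--Schwarz step with weights proportional to $\norm{A^{k+1-i}-A_\infty}_2$. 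That two-regime split is what yields $s_A=M_A\bigl(1+\tfrac12\ln\kappa_A\bigr)/(1-\beta_A)$, i.e., only a \emph{logarithmic} dependence on the equilibrium skewness, whereas your $s_A$ scales like $\sqrt{\kappa_A}$. Since the statement only asks for ``a constant decided by $A$,'' your argument is a complete and cleaner proof, and your constant still suffices downstream: in the multi-gossip analysis $\hat\beta_A=\beta_A^R$ is driven so small that either version of $s_A$ collapses to $O(1/(n\kappa_A))$. The trade-off only bites in the regime the paper's lower bound targets, where $\kappa_A$ can be exponential in $n$ (Proposition~\ref{app:special-mat}); there $\ln\kappa_A$ versus $\sqrt{\kappa_A}$ is the difference between a polynomial and an exponential constant, which is why the paper invests in the extra bookkeeping.
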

 
Under the $L$-smooth assumption, it is straightforward to derive an upper bound for the sum $\sum_{k=0}^K\norm{\Delta_g^{(k)}}_F^2$. Following the sequence of steps $\Delta_g \rightarrow \Delta_y \rightarrow \Delta_x$, Lemmas~\ref{lem(main):5} and \ref{lem(main):rolling sum} together lead to the following consensus lemma.

\begin{lemma}[Consensus lemma, informal]\label{lem(main):6}
    With Assumptions~\ref{ass-weight-matrix}, \ref{ass:smooth}, \ref{ass:bounded noise} and \ref{ass:diag}, we have
    \begin{align*}
        &\sum_{k=0}^K \EE[\norm{\Delta_x^{(k+1)}}_F^2]\le C_{x,y}\alpha^4\sum_{k=0}^K\EE[\norm{\pi_A^\top D_k^{-1} \nabla f(\vx^{(k)})}_F^2]\\
        &\quad+C_{x,0}\alpha^2\norm{\nabla f(\bm{x}^{(0)})}^2+C_{x,\sigma}\alpha^2(K+1)\sigma^2,
    \end{align*}
where $C_{x,y}, C_{x,0}$ and $C_{x,\sigma}$ are constants.
\end{lemma}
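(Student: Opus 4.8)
The plan is to follow the dependency chain $\Delta_g \to \Delta_y \to \Delta_x$ flagged after Lemma~\ref{lem(main):rolling sum}, pushing the rolling-sum estimate through each stage and then closing a self-referential inequality. Write $S_x := \sum_{k=0}^K\EE\fnorm{\Delta_x^{(k+1)}}^2$, $S_y := \sum_{k=0}^K\EE\fnorm{\Delta_y^{(k)}}^2$, $S_g := \sum_{i=-1}^K\EE\fnorm{\Delta_g^{(i)}}^2$, and the target quantity $T := \sum_{k=0}^K\EE\fnorm{\pi_A^\top D_k^{-1}\nabla f(\vx^{(k)})}^2$. The whole argument then amounts to tracking how many factors of $\alpha$ each source term accumulates as it passes along the chain, the goal being $S_x \lesssim \alpha^4 T + \alpha^2\fnorm{\nabla f(\bm x^{(0)})}^2 + \alpha^2(K+1)\sigma^2$.

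First I would bound $S_x$ in terms of $S_y$: applying Lemma~\ref{lem(main):rolling sum} directly to the identity $\Delta_x^{(k+1)} = -\alpha\sum_{i=0}^k(A-A_\infty)^{k+1-i}\Delta_y^{(i)}$ of Lemma~\ref{lem(main):5} yields $S_x \le \alpha^2 s_A^2 S_y$. Next I bound $S_y$ in terms of $S_g$. The expansion of $\Delta_y^{(k+1)}$ in Lemma~\ref{lem(main):5} splits into a clean rolling-sum term $\sum_i (A-A_\infty)^{k+1-i}D_{i+1}^{-1}\Delta_g^{(i)}$ and a correction $\sum_i \cO(k\beta_A^k)\Delta_g^{(i)}$. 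For the first term I set $\Delta^{(i)} := D_{i+1}^{-1}\Delta_g^{(i)}$ (absorbing the $i=-1$ index shift), invoke Lemma~\ref{lem(main):rolling sum}, and use $\fnorm{D_{i+1}^{-1}\Delta_g^{(i)}} \le \theta_A\fnorm{\Delta_g^{(i)}}$ from Assumption~\ref{ass:diag} to reach $s_A^2\theta_A^2 S_g$. For the correction, Cauchy--Schwarz over the inner index together with the summability of $\sum_k k^3\beta_A^{2k}$ (finite since $\beta_A<1$) bound its total contribution by a $\beta_A$-dependent constant times $S_g$. Hence $S_y \lesssim (s_A^2\theta_A^2 + C_{\beta_A})S_g$.

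The crux is bounding $S_g$, which feeds back into $S_x$ and $S_y$. By $L$-smoothness and the decomposition $\vg^{(k)} = \nabla f(\vx^{(k)}) + \bm n^{(k)}$ with $\EE\fnorm{\bm n^{(k)}}^2 \le n\sigma^2$, I would bound $\EE\fnorm{\Delta_g^{(k)}}^2 \lesssim L^2\EE\fnorm{\vx^{(k+1)}-\vx^{(k)}}^2 + n\sigma^2$, while the boundary term $\Delta_g^{(-1)}=\vg^{(0)}$ contributes $\fnorm{\nabla f(\bm x^{(0)})}^2 + n\sigma^2$. Using the $\vx$-update \eqref{eq: AGT-1} and $A\one=\one$, I rewrite $\vx^{(k+1)}-\vx^{(k)} = (A-I)\Delta_x^{(k)} - \alpha\one\pi_A^\top\vy^{(k)} - \alpha A\Delta_y^{(k)}$ and substitute $\pi_A^\top\vy^{(k)} = \pi_A^\top D_k^{-1}\vg^{(k)}$ from \eqref{eq:insight-2}. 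Taking conditional expectations to separate mean and noise isolates the target term $\alpha^2\fnorm{\one\pi_A^\top D_k^{-1}\nabla f(\vx^{(k)})}^2 = \alpha^2 n\,\fnorm{\pi_A^\top D_k^{-1}\nabla f(\vx^{(k)})}^2$, the $\alpha^2$-scaled noise from $\vy^{(k)}$ being dominated by the unscaled $n\sigma^2$ already present in $\Delta_g$. Summing gives $S_g \lesssim L^2 S_x + \alpha^2 L^2 S_y + \alpha^2 n L^2 T + \fnorm{\nabla f(\bm x^{(0)})}^2 + n(K+1)\sigma^2$.

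Finally I close the loop. Substituting $S_x \le \alpha^2 s_A^2 S_y$ and $S_y \lesssim S_g$ into the $S_g$ bound leaves $S_g$ on the right with coefficient $\lesssim \alpha^2 L^2$ times constants in $s_A,\theta_A,\beta_A$; choosing $\alpha$ small enough, consistent with the step-size restriction of the framework, forces this coefficient below $1/2$, so $S_g$ can be absorbed, leaving $S_g \lesssim \alpha^2 n L^2 T + \fnorm{\nabla f(\bm x^{(0)})}^2 + n(K+1)\sigma^2$. Feeding this back through $S_y \lesssim S_g$ and $S_x \le \alpha^2 s_A^2 S_y$ promotes the target term to $\alpha^4 T$ while the noise and initial-gradient terms retain their $\alpha^2$ prefactor, and the $n,s_A,\theta_A,\beta_A$ factors collect into $C_{x,y}, C_{x,0}, C_{x,\sigma}$. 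The main obstacle is exactly this circular absorption: I must verify the three chained inequalities leave the feedback coefficient on $S_g$ strictly below one under an explicit $\alpha$-threshold, and that the $\cO(k\beta_A^k)$ correction never spoils it — both requiring that the $\beta_A$-summable constants be tracked carefully rather than hidden inside $\lesssim$.
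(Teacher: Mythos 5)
Your proposal is correct and follows essentially the same route as the paper: the chain $\Delta_g\to\Delta_y\to\Delta_x$ via the rolling-sum lemma, splitting $\Delta_g$ into noise and gradient-difference parts, extracting the target term $\pi_A^\top D_k^{-1}\nabla f(\vx^{(k)})$ from the centroid dynamics $\pi_A^\top\vy^{(k)}=\pi_A^\top D_k^{-1}\vg^{(k)}$, and closing the circular dependence by a small-$\alpha$ absorption. The only (immaterial) difference is one of bookkeeping: you keep $S_g$ as an explicit intermediate quantity and absorb the feedback there, whereas the paper substitutes the $\Delta_g$-decomposition directly inside its $y$-consensus lemma (writing $\vx^{(i+1)}-\vx^{(i)}=\Delta_x^{(i+1)}-\Delta_x^{(i)}-\alpha A_\infty\vy^{(i)}$) and absorbs at $\sum_{k}\EE\norm{\Delta_x^{(k+1)}}_F^2$ under the threshold $\alpha^2 s_A^2L^2C_{y,x}\le 1/2$.
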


\textbf{Achieving linear speedup.} Building on Lemmas~\ref{lem(main):1}, \ref{lem(main):descent deviation} and \ref{lem(main):6}, we finally achieve the convergence Theorem~\ref{thm(main):pulldiag convergence}.

\begin{theorem}[\pulldiag-GT convergence]\label{thm(main):pulldiag convergence}
    Under assumptions~\ref{ass-weight-matrix},~\ref{ass:smooth},~\ref{ass:bounded noise} and~\ref{ass:diag}, when total iteration $K>\frac{2\kappa_A\theta_A^2}{1-\beta_A}$, there exists a learning rate $\alpha$ (see Section~\ref{subsec(app):theorem 2} in Appendix~\ref{app:pulldiagGT}) such that
    \begin{align*}
    \frac{1}{K}\sum_{k=0}^K\EE\norm{\nabla f(\bm{w}^{(k)})}^2\lesssim \frac{\sigma\sqrt{L\Delta}}{\sqrt{nK}}+\frac{L\Delta(1+C_A)}{K},
\end{align*}
    where $\bm{w}^{(k)}=\pi_A^\top \vx^{(k)}$, $C_A$ is a positive constant decided by the mixing matrix $A$. Proof is in Appendix~\ref{app:pulldiagGT}.
\end{theorem}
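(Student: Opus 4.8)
# Proof Proposal for Theorem \ref{thm(main):pulldiag convergence}

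\textbf{Overall strategy.} The plan is to combine the descent lemma (Lemma~\ref{lem(main):1}), the descent-deviation estimate (Lemma~\ref{lem(main):descent deviation}), and the consensus lemma (Lemma~\ref{lem(main):6}) into a single telescoping inequality summed over $k=0,\dots,K$, and then to choose the learning rate $\alpha$ so that all error terms are absorbed into the left-hand side, leaving only the two target terms. First I would sum the descent lemma over $k$, which telescopes the difference $f(\bm{w}^{(k)})-\EE[f(\bm{w}^{(k+1)})|\cF_k]$ into the bounded gap $f(\bm{w}^{(0)})-f^*\le \Delta$ (after taking total expectation), producing the dominant $\Delta$-term. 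This yields a master inequality in which $\frac{n\alpha}{2}\sum_k\EE\norm{\nabla f(\bm{w}^{(k)})}^2$ is controlled by the telescoped gap plus the summed consensus error, the summed descent deviation, the noise term $\frac{\alpha^2 L\sigma^2}{2}\sum_k d_k$, and the favorable negative term $-\frac{\alpha}{4n}\sum_k\norm{\pi_A^\top D_k^{-1}\nabla f(\vx^{(k)})}^2$.

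\textbf{Key steps in order.} The second step is to control the summed descent deviation. By Lemma~\ref{lem(main):descent deviation}, each term carries a factor $\beta_A^{2k}$, so $\sum_k \beta_A^{2k}$ converges geometrically to $O(1/(1-\beta_A))$; the pieces involving $\norm{\Delta_x^{(k)}}_F^2$ fold into the consensus error, and the pieces involving $f(\bm{w}^{(k)})-f^*$ are handled via the geometric-coefficient absorption argument (Lemma~\ref{lem(app):absorb extra term}). The third step plugs in the consensus lemma (Lemma~\ref{lem(main):6}), whose right-hand side contributes three terms: a term proportional to $\alpha^4\sum_k\EE\norm{\pi_A^\top D_k^{-1}\nabla f(\vx^{(k)})}_F^2$, a term $O(\alpha^2\norm{\nabla f(\bm{x}^{(0)})}^2)$, and the accumulating noise $O(\alpha^2(K+1)\sigma^2)$. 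The crucial observation here is that the $\alpha^4$-term can be dominated by the negative term $-\frac{\alpha}{4n}\sum_k\norm{\pi_A^\top D_k^{-1}\nabla f(\vx^{(k)})}^2$ already present in the descent lemma, \emph{provided} $\alpha$ is chosen small enough that $C_{x,y}\alpha^4\cdot\alpha L^2\lesssim \frac{\alpha}{4n}$; this is where the stepsize constraint and the constant $C_A$ enter.

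\textbf{Finishing and stepsize tuning.} After this absorption, I would divide through by $\frac{n\alpha}{2}K$ to obtain a bound of the form $\frac{1}{K}\sum_k\EE\norm{\nabla f(\bm{w}^{(k)})}^2\lesssim \frac{\Delta}{n\alpha K}+\alpha L\sigma^2(\text{const})+\frac{\alpha\,\text{const}}{\cdot}$, where the noise coefficient reflects $\sum_k d_k=O(K)$ with $d_k=O(1)$ uniformly (using $D_k\to\mathrm{diag}(\pi_A)$ and Assumption~\ref{ass:diag}). The final step is the standard stepsize optimization: balancing the $\frac{\Delta}{n\alpha K}$ term against the $\alpha L\sigma^2$ term gives $\alpha\sim\sqrt{\frac{n\Delta}{L\sigma^2 K}}$ (capped by the admissibility constraints $\alpha\le\frac{1}{2nL}$ and the constant from the absorption step), yielding the $\frac{\sigma\sqrt{L\Delta}}{\sqrt{nK}}$ leading term; when the noise term is dominated by the constraints, the deterministic term $\frac{L\Delta(1+C_A)}{K}$ takes over, with $C_A$ aggregating $\kappa_A$, $\theta_A$, $s_A$, and $1/(1-\beta_A)$ from the preceding lemmas.

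\textbf{Main obstacle.} The hardest part will be the absorption in the third step: ensuring that the consensus lemma's $\alpha^4\sum_k\EE\norm{\pi_A^\top D_k^{-1}\nabla f(\vx^{(k)})}_F^2$ feedback term is genuinely dominated by the single negative term from the descent lemma, rather than creating a circular dependence between consensus error and gradient norm. This requires tracking the precise dependence of $C_{x,y}$, $s_A$, and the $\cO(k\beta_A^k)$ tail in Lemma~\ref{lem(main):5} on the network constants, and verifying that the resulting constraint on $\alpha$ is compatible with the $\alpha\le\frac{1}{2nL}$ requirement and still large enough to permit the optimal $\sqrt{n/K}$ scaling. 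The threshold $K>\frac{2\kappa_A\theta_A^2}{1-\beta_A}$ in the hypothesis is precisely what guarantees the geometric tails are summable and the admissible stepsize window is nonempty.
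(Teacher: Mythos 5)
Your proposal is correct and follows essentially the same route as the paper's proof: sum the descent lemma, bound the descent deviation via Lemma~\ref{lem(main):descent deviation} and absorb the resulting $f(\bm{w}^{(k)})-f^*$ terms with the geometric-coefficient Lemma~\ref{lem(app):absorb extra term}, plug in the consensus lemma and dominate its $\alpha^4\sum_k\EE\|\pi_A^\top D_k^{-1}\nabla f(\vx^{(k)})\|^2$ feedback with the negative term from the descent lemma, then tune $\alpha$ against the admissibility constraints (the paper's $\alpha_1,\dots,\alpha_7$, aggregated into $C_A$). The only slip is the balancing formula: equating $\Delta/(n\alpha K)$ with $\alpha L\sigma^2$ gives $\alpha\sim\sqrt{\Delta/(nKL\sigma^2)}$, not $\sqrt{n\Delta/(L\sigma^2 K)}$ --- with the $n$ placed as you wrote it the noise term would be $\sqrt{n}\,\sigma\sqrt{L\Delta}/\sqrt{K}$ rather than the claimed (and correct) $\sigma\sqrt{L\Delta}/\sqrt{nK}$.
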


%Detailed proof of Lemma~\ref{lem(main):1}--\ref{lem(main):6} are provided in
\begin{remark}
    Theorem~\ref{thm(main):pulldiag convergence} establishes the first linear speedup convergence utilizing solely row-stochastic mixing matrices. The term $\frac{C_A L\Delta}{K+1}$ signifies the influence of the network, where $C_A$ is a rational function of $\kappa_A, \beta_A$ and $\theta_A$.
\end{remark}
% \begin{remark}
%     If data heterogeneity can be assumed to be bounded, both the descent deviation and the consensus error can be directly estimated, significantly simplifying the proof. However, since our lower bound does not account for heterogeneity, we refrain from making this assumption in the proof.
% \end{remark}

\section{Achieving Near-Optimal Convergence Rate}
Comparing Theorem~\ref{thm(main):pulldiag convergence} with the lower bound in Theorem~\ref{thm: lower-bound}, we identify two key discrepancies preventing \pulldiag-GT from achieving the lower bound. First, Theorem~\ref{thm(main):pulldiag convergence} relies on Assumption \ref{ass:diag}, which the lower bound does not require. Second, the constant \(C_A\) in Theorem~\ref{thm(main):pulldiag convergence} depends on \(\theta_A\), the upper bound of the diagonals, which is absent from the lower bound and can grow arbitrarily large even for fixed \(\beta_A\) and \(\kappa_A\). This section introduces a variant of \pulldiag-GT to address these discrepancies and achieve the  lower bound. 

\textbf{Removing $\theta_A$ with multiple gossips.} 
The requirement for \(\theta_A\) (and Assumption \ref{ass:diag}) arises from the use of \(\mathrm{Diag}(A^k)^{-1}\) for gradient correction in the \pulldiag-GT update \eqref{eq: AGT-2}. For small values of \(k\), the diagonal elements of \(A^k\) can become extremely small due to network sparsity, leading to significant instability in the inversion \(\mathrm{Diag}(A^k)^{-1}\) during the initial phase. As \(k\) increases, \(\mathrm{Diag}(A^k)\) converges to \(\mathrm{diag}(\pi_A)\), which stabilizes the correction. This behavior is formally stated in the following lemma:
\begin{lemma}\label{lem(main):mg for stable diag}
    For $A\in\mathbb{R}^{n\times n}$ satisfying Assumption~\ref{ass-weight-matrix}, if $k\ge \frac{2\ln(\kappa_A)+2\ln(n)}{1-\beta_A}$, we have
    \[[A^k]_{ii}>0 \quad\text{and}\quad [A^k]_{ii}^{-1}\le 2n\kappa_A, \quad \forall i\in[n].\]
\end{lemma}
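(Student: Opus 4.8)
The goal is to lower-bound the diagonal entries of $A^k$ once $k$ exceeds a threshold proportional to $\frac{\ln(\kappa_A)+\ln(n)}{1-\beta_A}$. The plan is to exploit the convergence $A^k \to A_\infty = \mathds{1}_n\pi_A^\top$ guaranteed by Proposition~\ref{prop-PF}, measured in the $\pi_A$-norm, where the generalized spectral gap $\beta_A$ controls the geometric rate. The key observation is that the limiting matrix $A_\infty$ has diagonal entries $[A_\infty]_{ii} = [\pi_A]_i \ge \min(\pi_A)$, which is strictly positive; so for large enough $k$ the deviation $[A^k]_{ii} - [\pi_A]_i$ must be smaller than $[\pi_A]_i$ in magnitude, forcing $[A^k]_{ii}$ to stay close to $[\pi_A]_i$ and in particular to remain positive.

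First I would quantify the entrywise deviation. Starting from $\beta_A = \norm{A - A_\infty}_{\pi_A}$, the submultiplicativity of the induced $\pi_A$-norm yields $\norm{A^k - A_\infty}_{\pi_A} \le \beta_A^k$, using that $A_\infty$ is idempotent and absorbs powers of $A$ (i.e. $A A_\infty = A_\infty A = A_\infty$, so $(A-A_\infty)^k = A^k - A_\infty$). I then convert this operator-norm bound into a bound on individual entries. Since $\norm{W}_{\pi_A} = \norm{\Pi_A^{1/2} W \Pi_A^{-1/2}}_2$, and the $2$-norm dominates the absolute value of any entry of $\Pi_A^{1/2}(A^k - A_\infty)\Pi_A^{-1/2}$, I can extract $\big|[A^k]_{ii} - [\pi_A]_i\big| \le \beta_A^k$ for the diagonal entries (the $\Pi_A^{1/2}$ and $\Pi_A^{-1/2}$ factors cancel on the diagonal). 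This gives the clean estimate $[A^k]_{ii} \ge [\pi_A]_i - \beta_A^k$.

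Next I would choose $k$ so that $\beta_A^k \le \tfrac12 \min(\pi_A)$, which forces $[A^k]_{ii} \ge \tfrac12 \min(\pi_A) > 0$ and hence $[A^k]_{ii}^{-1} \le 2/\min(\pi_A)$. To match the stated threshold and conclusion, I bound $\min(\pi_A)$ from below: since $\pi_A$ sums to $1$ over $n$ entries and $\kappa_A = \max(\pi_A)/\min(\pi_A)$, we have $\max(\pi_A) \ge 1/n$, so $\min(\pi_A) \ge \max(\pi_A)/\kappa_A \ge 1/(n\kappa_A)$. Therefore $[A^k]_{ii}^{-1} \le 2/\min(\pi_A) \le 2n\kappa_A$, which is exactly the claimed bound. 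It remains to verify that $k \ge \frac{2\ln(\kappa_A)+2\ln(n)}{1-\beta_A}$ suffices to guarantee $\beta_A^k \le \tfrac12 \min(\pi_A)$; using $\min(\pi_A)\ge 1/(n\kappa_A)$ it is enough that $\beta_A^k \le \tfrac{1}{2n\kappa_A}$, i.e. $k(1-\beta_A) \gtrsim k\ln(1/\beta_A) \ge \ln(2n\kappa_A)$, and the factor $2$ in the numerator absorbs the constant $\ln 2$ together with the standard inequality $\ln(1/\beta_A) \ge 1-\beta_A$.

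The main obstacle is the careful passage from the induced $\pi_A$-norm bound to a genuine entrywise diagonal estimate; one must check that the diagonal similarity transformation by $\Pi_A^{\pm 1/2}$ leaves diagonal entries invariant so that $\beta_A^k$ directly controls $\big|[A^k]_{ii}-[\pi_A]_i\big|$, rather than picking up a spurious $\kappa_A$-dependent factor. A secondary technical point is the logarithmic bookkeeping: the threshold uses $1-\beta_A$ in the denominator whereas the natural quantity is $\ln(1/\beta_A)$, so I would invoke $\ln(1/\beta_A)\ge 1-\beta_A$ (valid for $\beta_A\in[0,1)$) to replace one by the other, and confirm the constant $2$ in $\frac{2\ln(\kappa_A)+2\ln(n)}{1-\beta_A}$ comfortably covers the $\ln 2$ slack.
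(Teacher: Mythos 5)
Your proof is correct, and its overall skeleton matches the paper's: write $[A^k]_{ii}=[\pi_A]_i+[A^k-A_\infty]_{ii}$, bound the deviation geometrically via $\beta_A$, and convert $\min(\pi_A)\ge 1/(n\kappa_A)$ into the final $2n\kappa_A$ bound. The one place you genuinely diverge is the entrywise estimate. The paper (inside Appendix D, where this lemma is effectively proved for $A^R$) passes through the Frobenius norm, $|[A^k-A_\infty]_{ii}|\le \norm{A^k-A_\infty}_F\le \sqrt{n}\norm{A^k-A_\infty}_2\le \sqrt{n\kappa_A}\,\beta_A^k$, which picks up a spurious $\sqrt{n\kappa_A}$ factor that then has to be beaten down by a larger exponent (the paper applies the bound with $R\gtrsim 3(\ln\kappa_A+\ln n+1)/(1-\beta_A)$, where this slack is harmless). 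Your observation that the diagonal similarity $\Pi_A^{\pm 1/2}$ leaves diagonal entries invariant, so that $|[A^k]_{ii}-[\pi_A]_i|\le \norm{A^k-A_\infty}_{\pi_A}\le \beta_A^k$ directly, is sharper; it is exactly what makes the threshold $k\ge \frac{2\ln\kappa_A+2\ln n}{1-\beta_A}$ close cleanly, since then $\beta_A^k\le e^{-k(1-\beta_A)}\le (n\kappa_A)^{-2}\le \tfrac{1}{2}\min(\pi_A)$ whenever $n\kappa_A\ge 2$ (and the $n=1$ case is trivial). One cosmetic slip: in your bookkeeping chain the inequality runs $k\ln(1/\beta_A)\ge k(1-\beta_A)$, not the other way around as the "$\gtrsim$" momentarily suggests, but your stated use of $\ln(1/\beta_A)\ge 1-\beta_A$ is the correct direction and the argument goes through.
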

Lemma \ref{lem(main):mg for stable diag} implies that for sufficiently large \(k\), the diagonals of \(A^k\) are naturally bounded without any extra assumptions. This motivates the use of the multiple gossip strategy to eliminate Assumption \ref{ass:diag} and the dependence on \(\theta_A\) in  Theorem \ref{thm(main):pulldiag convergence}. Instead of a single gossip step \(\vz \xmapsto{\text{A-Protocol}} A\vz\), we perform \(R\) consecutive gossip steps \(\vz \xmapsto{\text{Multiple Gossip}} A^R\vz\) during each communication phase, where the gossip round \(R\) is determined as indicated in Lemma \ref{lem(main):mg for stable diag}.

\textbf{Pull-DIAG-GT with multiple gossips.} We now introduce MG-\pulldiag-GT to remove Assumption \ref{ass:diag} and the the reliance on $\theta_A$. Here, ``MG'' is short for multiple gossips. 
\begin{subequations}
    \begin{align}
        \vx^{(t+1)}&=A^R(\vx^{(t)}-\alpha \vy^{(t)}) \label{eq-mg-1}\\
        \vg^{(t+1)}&=\frac{1}{R}\sum_{r=1}^R \nabla F(\vx^{(t+1)},\bxi^{(t+1,r)}) \label{eq-mg-2}\\
        \vy^{(t+1)}&=A^R(\vy^{(t)}+D_{t+1}^{-1}\vg^{(t+1)}-D_t^{-1}\vg^{(t)})\label{eq-mg-3}
    \end{align}
\end{subequations}
Here $D_t=\mathrm{Diag}(A^{tR}), \forall t \ge 1$, $D_0=I_n$, $\vy^{(0)}=\vg^{(0)}=\frac{1}{R}\sum_{r=1}^R \nabla F(\vx^{(0)},\bxi^{(0,r)})$. The implementation details are provided in Appendix~\ref{app:Implementation Details}. It is observed that for each iteration \(t\), recursions \eqref{eq-mg-1} and \eqref{eq-mg-3} incur \(R\) rounds of communication, and \eqref{eq-mg-2} requires \(R\) samples to compute the mini-batch stochastic gradient. To ensure a fair comparison with \pulldiag-GT, for each \(K\)-iteration run of \pulldiag-GT, we run MG-\pulldiag-GT for \(\boldsymbol{T = K/R}\) \textbf{iterations}, thereby fixing the total number of communication rounds and data samples at \(K\).

\textbf{Achieving optimal convergence rate.} Technically, by performing multiple gossip steps, we improve the spectral parameter from \( \beta_A \) to \( \beta_A^R \), which exponentially reduces all terms associated with decentralized communication. Additionally, by utilizing an \(R\)-mini-batch stochastic gradient, we reduce the gradient variance from \(\sigma^2\) to \(\sigma^2/R\). However, reducing the outer iterations from \(K\) to \(K/R\) may polynomially slow the convergence. By carefully balancing this exponential gain against the polynomial cost with an appropriately chosen \(R\), we can improve overall convergence, ultimately achieving optimal performance:

\begin{theorem}\label{thm(main):3}
Suppose Assumptions~\ref{ass-weight-matrix},\ref{ass:smooth} and \ref{ass:bounded noise} hold, and set $T=K/R$. When $R=\lceil\frac{3(1+\ln(\kappa_A)+\ln(n))}{1-\beta_A}\rceil$ and $\alpha$ being selected properly, we have
    \begin{align*}
        &\frac{1}{T}\sum_{t=1}^T \EE[\norm{\nabla f(\bm{w}^{(t)})}_F^2] \\
        &\lesssim \frac{\sigma\sqrt{L\Delta}}{\sqrt{nK}}+\frac{(1+\ln(\kappa_A)+\ln(n))L\Delta}{(1-\beta_A)K},
    \end{align*}
where $\bm{w}^{(k)}=\pi_A^\top \vx^{(k)}$. The proof is in Appendix~\ref{app:mg convergence}.
\end{theorem}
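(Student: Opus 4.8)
The plan is to recognize that MG-\pulldiag-GT is exactly \pulldiag-GT run on the power matrix $B := A^R$, with an $R$-sample mini-batch gradient and $T = K/R$ outer iterations, and then to feed this observation into the machinery behind Theorem~\ref{thm(main):pulldiag convergence} with every quantity re-expressed through $B$. First I would record how the metrics transform. Since $\pi_A^\top A = \pi_A^\top$ forces $\pi_A^\top B = \pi_A^\top$, the Perron vector is unchanged: $\pi_B = \pi_A$, hence $\kappa_B = \kappa_A$ and $\lim_t B^t = \one\pi_A^\top = A_\infty$. Using $A A_\infty = A_\infty A = A_\infty^2 = A_\infty$ one checks by induction that $(A-A_\infty)^R = A^R - A_\infty = B - A_\infty$, so by submultiplicativity of the induced norm $\beta_B = \norm{(A-A_\infty)^R}_{\pi_A}\le \beta_A^R$. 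Crucially, the choice $R=\lceil 3(1+\ln(\kappa_A)+\ln(n))/(1-\beta_A)\rceil$ meets the hypothesis of Lemma~\ref{lem(main):mg for stable diag} (indeed $R\ge (2\ln\kappa_A+2\ln n)/(1-\beta_A)$), so each $D_t=\mathrm{Diag}(A^{tR})$ with $t\ge 1$ obeys $[A^{tR}]_{ii}^{-1}\le 2n\kappa_A=:\theta_B$. Thus Assumption~\ref{ass:diag} holds \emph{automatically} for $B$ with $\theta_B=2n\kappa_A$, which is precisely why MG-\pulldiag-GT needs no separate diagonal-boundedness hypothesis.

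Next I would handle the stochastic term. Averaging $R$ independent oracle calls in \eqref{eq-mg-2} gives an unbiased estimator of trace-variance $\tilde\sigma^2=\sigma^2/R$, so $B$ together with $\tilde\sigma$ satisfies Assumptions~\ref{ass-weight-matrix}, \ref{ass:smooth}, \ref{ass:bounded noise} and \ref{ass:diag}. Applying Theorem~\ref{thm(main):pulldiag convergence} with the replacements $A\to B$, $\sigma\to\tilde\sigma$, $K\to T$ yields
\begin{align*}
\frac{1}{T}\sum_{t}\EE\norm{\nabla f(\bm{w}^{(t)})}^2 \lesssim \frac{\tilde\sigma\sqrt{L\Delta}}{\sqrt{nT}} + \frac{L\Delta(1+C_B)}{T},
\end{align*}
where $C_B$ is the network constant of Theorem~\ref{thm(main):pulldiag convergence} evaluated at $(\kappa_B,\beta_B,\theta_B)$. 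Substituting $\tilde\sigma=\sigma/\sqrt{R}$ and $T=K/R$ collapses the first term to $\sigma\sqrt{L\Delta}/\sqrt{nK}$ \emph{exactly}: the mini-batching leaves the statistical rate intact. The second term becomes $L\Delta(1+C_B)R/K$, and since $R=\Theta((1+\ln\kappa_A+\ln n)/(1-\beta_A))$ it already has the exact shape of the target bound, provided $C_B=\cO(1)$.

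The main obstacle is therefore establishing $C_B=\cO(1)$, and this is where the real work lies. It is delicate because $\theta_B=2n\kappa_A$ and the rolling-sum constant $s_B$ are polynomially large in $n$ and $\kappa_A$, so a blind substitution into the rational expression for $C_A$ would leave uncontrolled polynomial factors. The resolution is to revisit the explicit bounds of Lemmas~\ref{lem(main):descent deviation}, \ref{lem(main):5} and \ref{lem(main):6} and verify that every factor growing with $n$, $\kappa_A$ or $\theta_B$ is attached to a positive power of the spectral quantity: the descent deviation of Lemma~\ref{lem(main):descent deviation} carries the prefactor $n\kappa_B\theta_B^2\beta_B^{2t}$, and the communication-induced pieces of the consensus bound carry analogous $\beta_B$-weights of the form $\beta_B^{m}$ with $m\ge 1$. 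Under $B$ these read $\beta_A^{Rm}$, and the calibration $R(1-\beta_A)\ge 3(1+\ln\kappa_A+\ln n)$ forces $\beta_A^{R}\le e^{-3}(n\kappa_A)^{-3}$ (using $1-\beta_A\le\ln(1/\beta_A)$), which is small enough to dominate the worst polynomial prefactor $\theta_B^2\kappa_A=\Theta(n^2\kappa_A^3)$ and all others.

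I would then carry this decay through the telescoped descent inequality term by term: after choosing $\alpha$ to balance the statistical and optimization contributions (as in the proof of Theorem~\ref{thm(main):pulldiag convergence}), each network-dependent piece is pushed down to $\cO(L\Delta/T)$ because its polynomial growth is multiplied by $\beta_A^{Rm}$. This yields $C_B\le c_0$ for an absolute constant $c_0$, and combining it with the already-simplified first term gives the claimed near-optimal rate. The bulk of the effort is the bookkeeping confirming that the exponent $3$ in the definition of $R$ is large enough to absorb the largest polynomial factor occurring across all of the constants simultaneously; the reduction to $B=A^R$ and the variance/iteration rescaling are, by comparison, routine.
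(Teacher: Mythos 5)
Your proposal is correct and follows essentially the same route as the paper's Appendix~D proof: reduce MG-\pulldiag-GT to \pulldiag-GT on $B=A^R$ with variance $\sigma^2/R$ and $T=K/R$, note $\pi_B=\pi_A$ and $\beta_B\le\beta_A^R$, invoke Lemma~\ref{lem(main):mg for stable diag} to get $\theta_B\le 2n\kappa_A$ so Assumption~\ref{ass:diag} holds for free, and then verify that the network constant becomes an absolute constant because $\beta_A^R\le e^{-3}(n\kappa_A)^{-3}$ kills every polynomial prefactor. The paper carries out the final step as an explicit term-by-term check of the seven step-size constraints $\hat\alpha_1,\dots,\hat\alpha_7$, which is exactly the bookkeeping you describe.
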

\begin{remark}
It is observed that Theorem~\ref{thm(main):3} is independent of Assumption~\ref{ass:diag} and \(\theta_A\) due to the multiple gossip strategy.
\end{remark}
\begin{remark}
When \(\ln(n)\) is negligible compared to \(\ln(\kappa_A)\), Theorem~\ref{thm(main):3} aligns with our lower bound (Theorem~\ref{thm: lower-bound}), making both the lower bound and the algorithm optimal. Otherwise, we say that MG-\pulldiag-GT achieves near-optimal complexity (rather than optimal complexity) due to the existence of the logarithmic gap \(\ln(n)\).

\end{remark}

\section{Experiments}
In this section, we empirically validate the theoretical results presented in Theorems~\ref{thm(main):pulldiag convergence} and~\ref{thm(main):3}. For the stochastic gradient oracle, we focus on the case where each node has access to a finite dataset, and the stochastic gradient is computed with respect to a randomly chosen data sample at each iteration. To assess the performance of the algorithms, we conduct experiments on a synthetic dataset, MNIST dataset and CIFAR-10 dataset. Implementation details are provided in Appendix~\ref{app:exp} for reference.

\subsection{Non-convex Logistic Regression for Classification}

In this first group of experiment, we minimize a synthetic nonconvex loss function~\cite{antoniadis2011penalized,GT-Xinran,alghunaim2022unified,liang2023towards} that satisfies the $L$-smooth property. Our experiments are conducted on directed exponential graphs~\cite{GT-Xinran,ying2021exponential} and directed ring graphs (see Figure~\ref{network_graphs} in Appendix~\ref{Network Design}). For exponential graphs, we evaluate the performance across network sizes of $1$ (single node), $ 2, 8, 16, 128$ and $512$. For ring graphs, we evaluate the performance across network sizes of $1$ (single node), $5,10,16$. 

\begin{figure}[h]
%\vspace{-2mm}
\includegraphics[width=4cm]{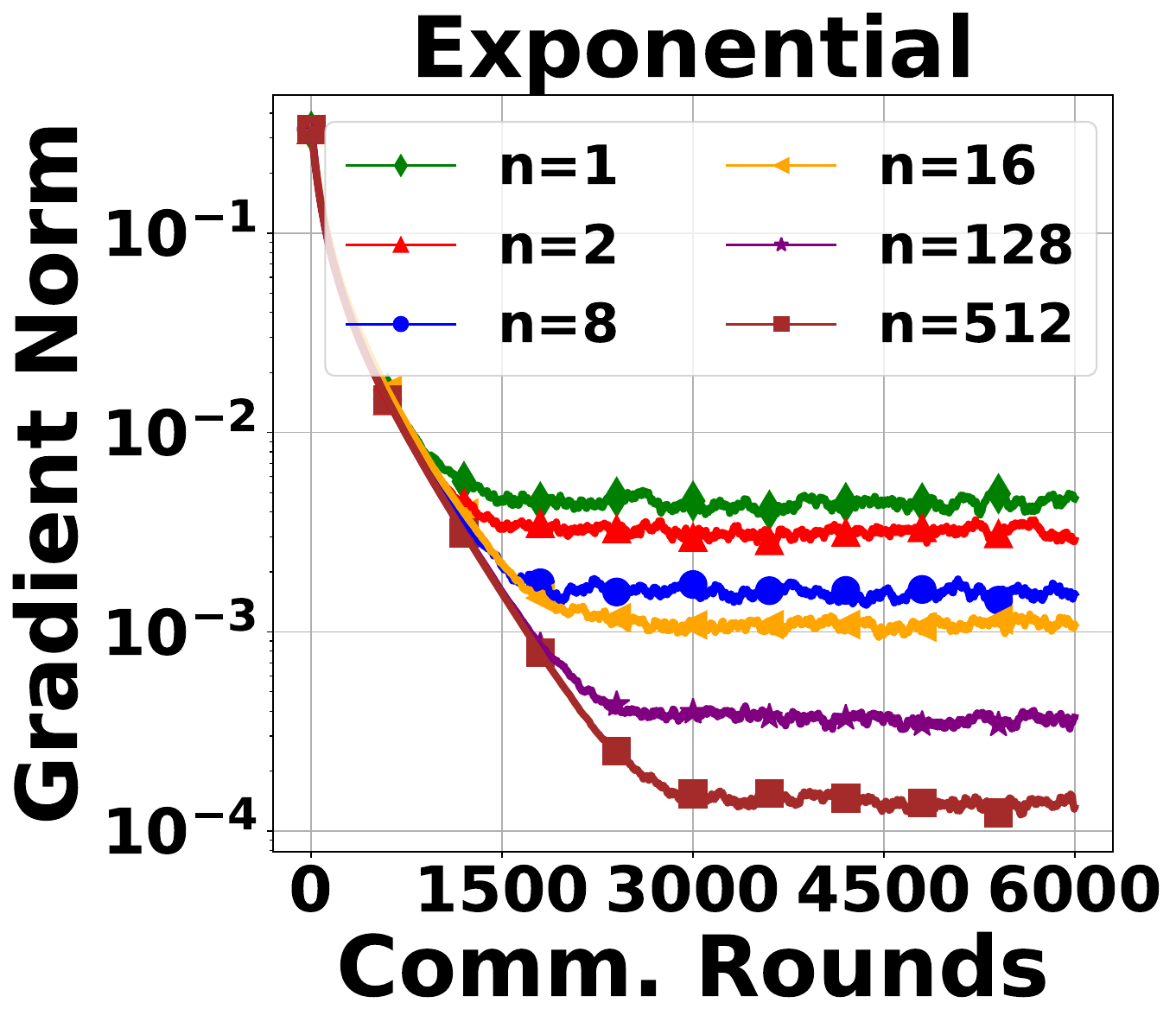}
\includegraphics[width=4cm]{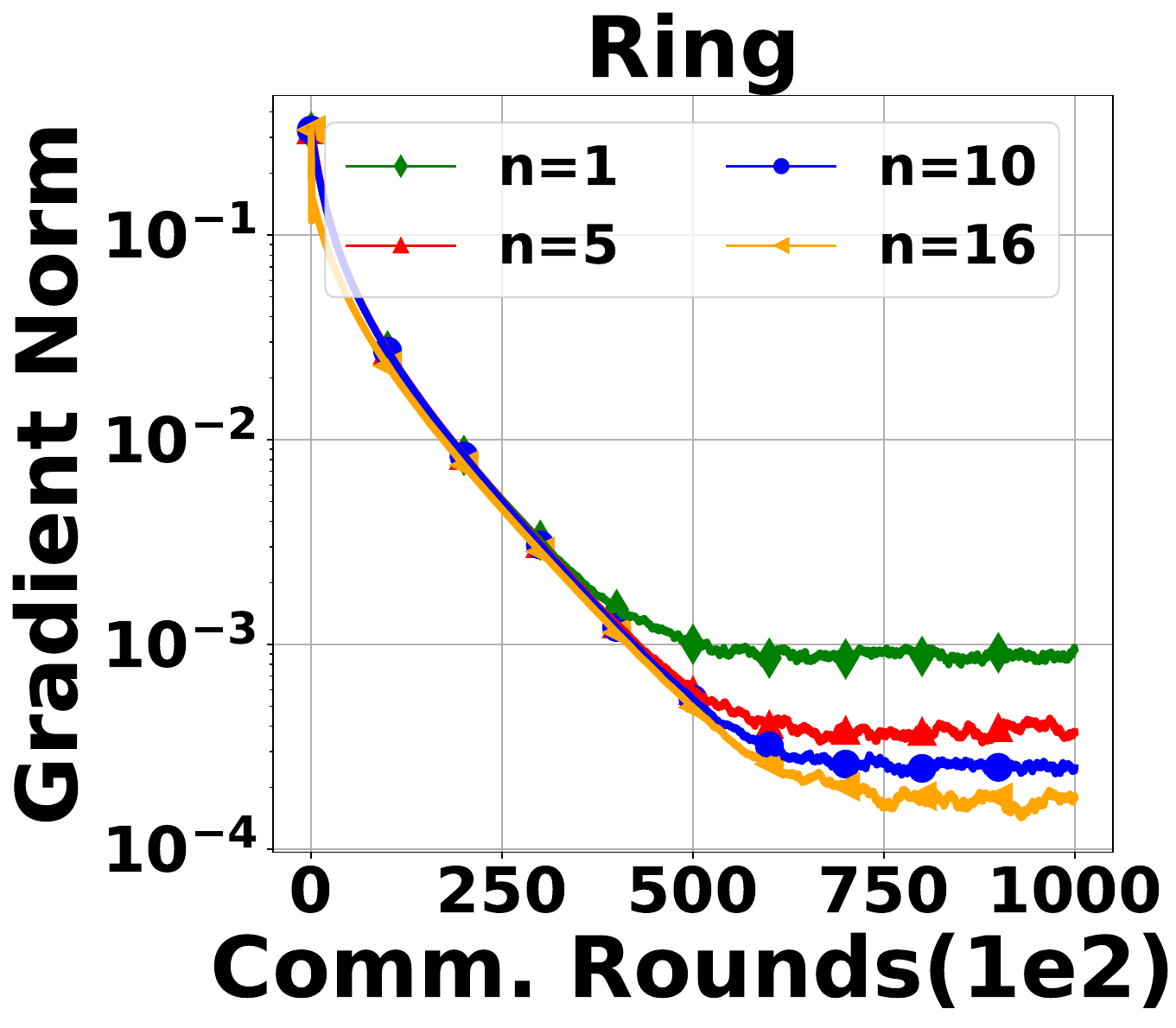}
%\vspace{-2mm}
\caption{Performance of \pulldiag-GT for non-convex logistic regression evaluated across exponential graphs and ring graphs. Number $n$ denotes the number of nodes. 
}
\label{fig:linear speedup}
%\vspace{-2mm}
\end{figure}

The results in Figure~\ref{fig:linear speedup} reveal that, for each fixed topology, the gradient curve decreases proportionally to the square root of the number of nodes after the same number of communication rounds. This numerically validates our Theorem~\ref{thm(main):pulldiag convergence} that \pulldiag-GT is able to achieve linear speedup.

\subsection{Neural Network for Multi-Class Classification}

In the second group of experiment, we focus on a digit-classification task using the MNIST dataset. %We split the dataset into $n$ subsets, with each subset assigned to one node. 
We evaluate the performance of MG-\pulldiag-GT against the vanilla \pulldiag-GT across four distinct network topologies: a ring graph, an undirected grid graph, a geometric graph, and a nearest neighbor graph, each comprising 16 nodes. These topologies are illustrated in Figure~\ref{network_graphs} in Appendix~\ref{Network Design}. The weights of the mixing matrices are determined using the Metropolis rule~\cite{nedic2018network}, which produces row-stochastic but not doubly-stochastic matrices.
%We compare MG-\pulldiag-GT with vanilla \pulldiag-GT on a directed ring graph and an undirected grid graph, both with 16 nodes. 
\begin{figure}[H]
\centering
\includegraphics[width=4cm]{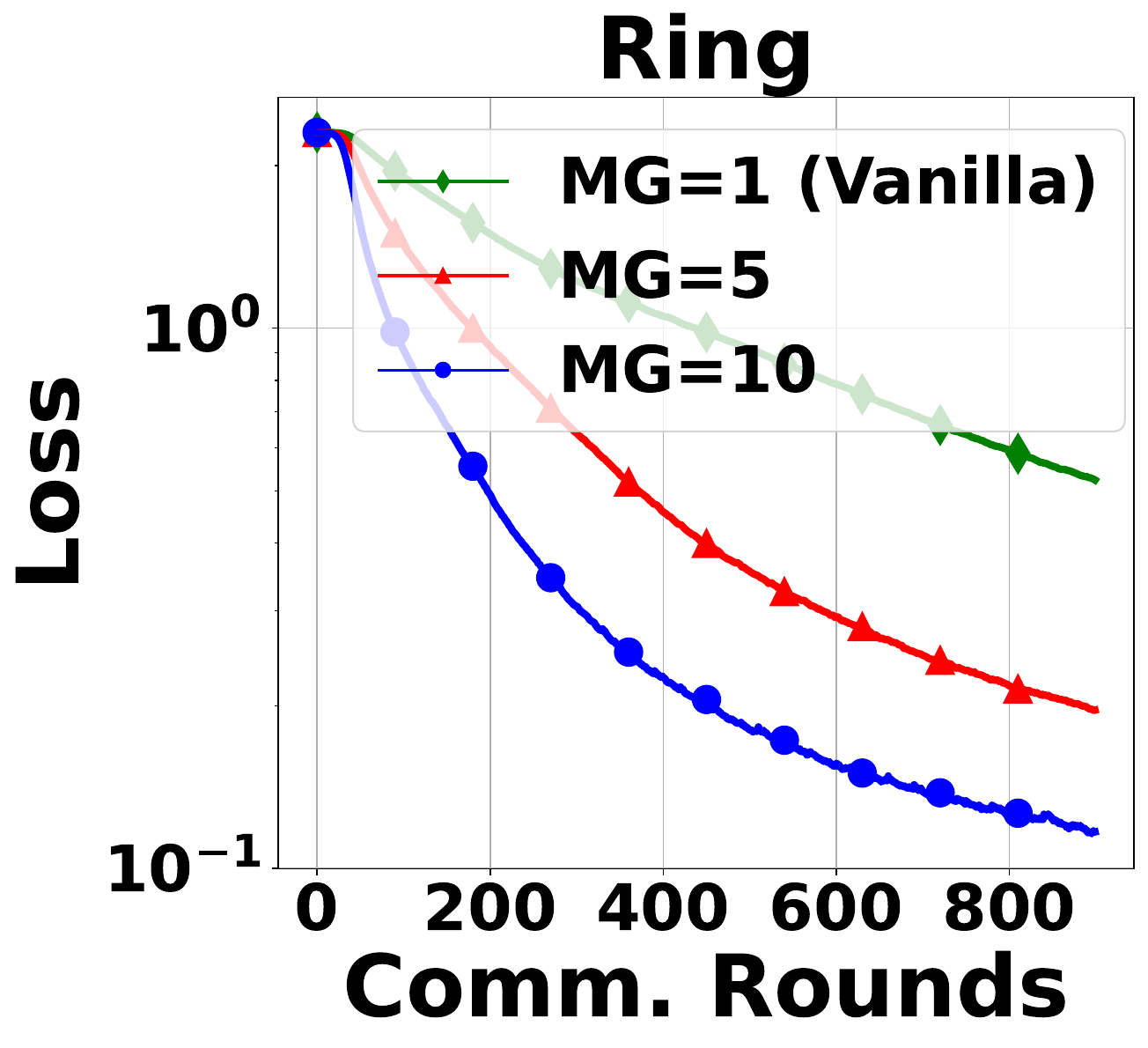}
\hfill
\includegraphics[width=4cm]{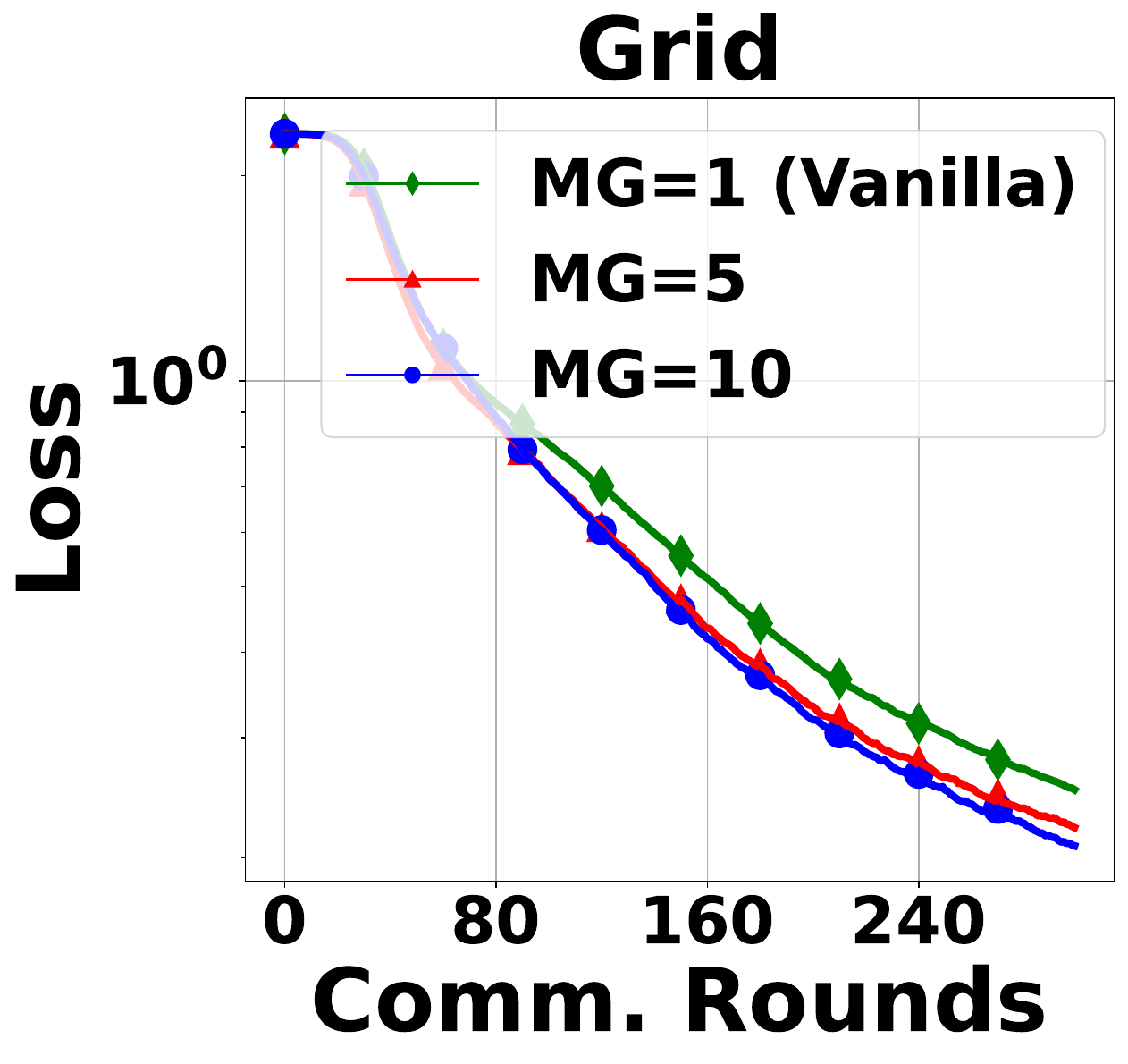}
\centering
\includegraphics[width=4cm]{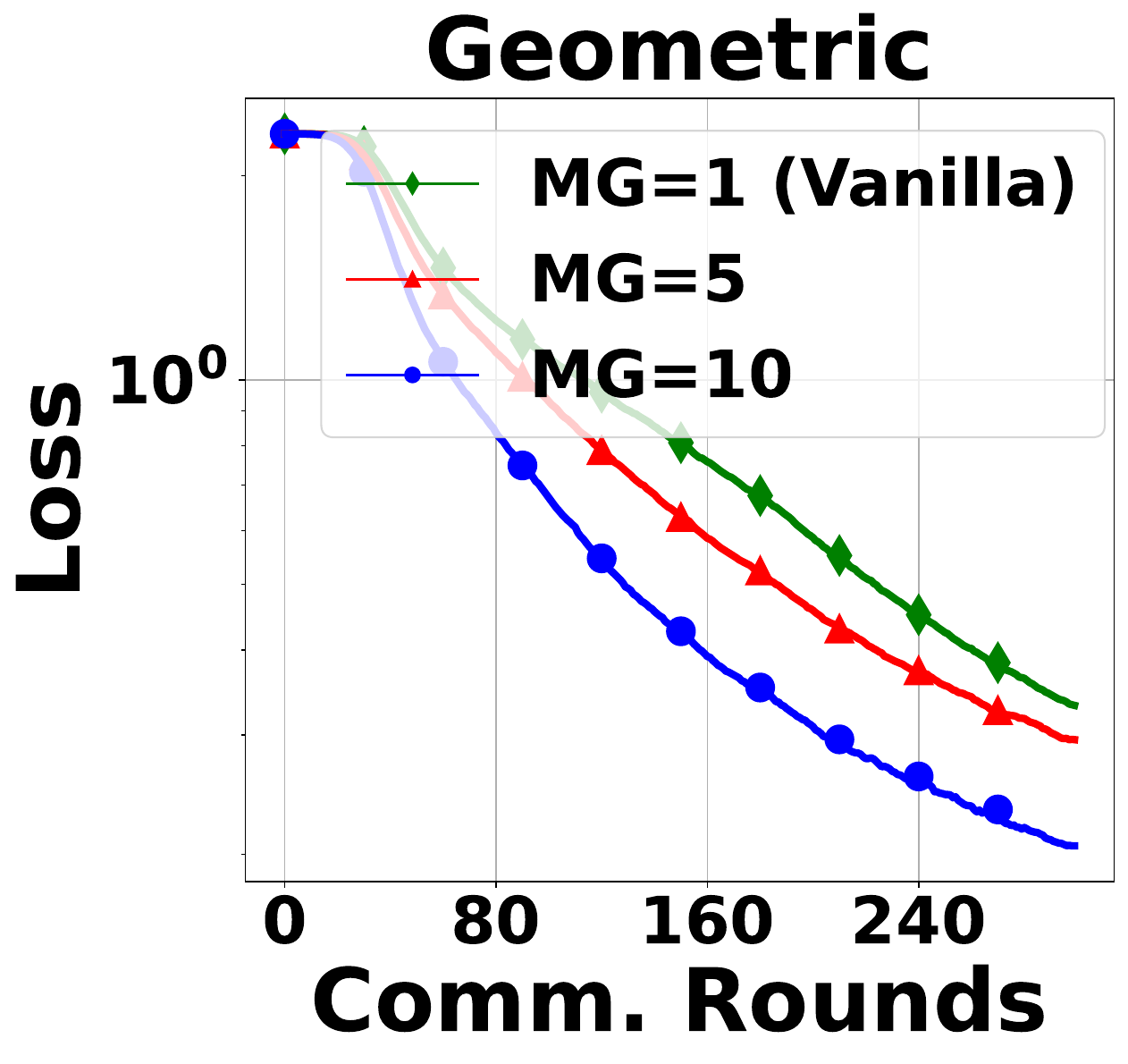}
\hfill
\includegraphics[width=4cm]{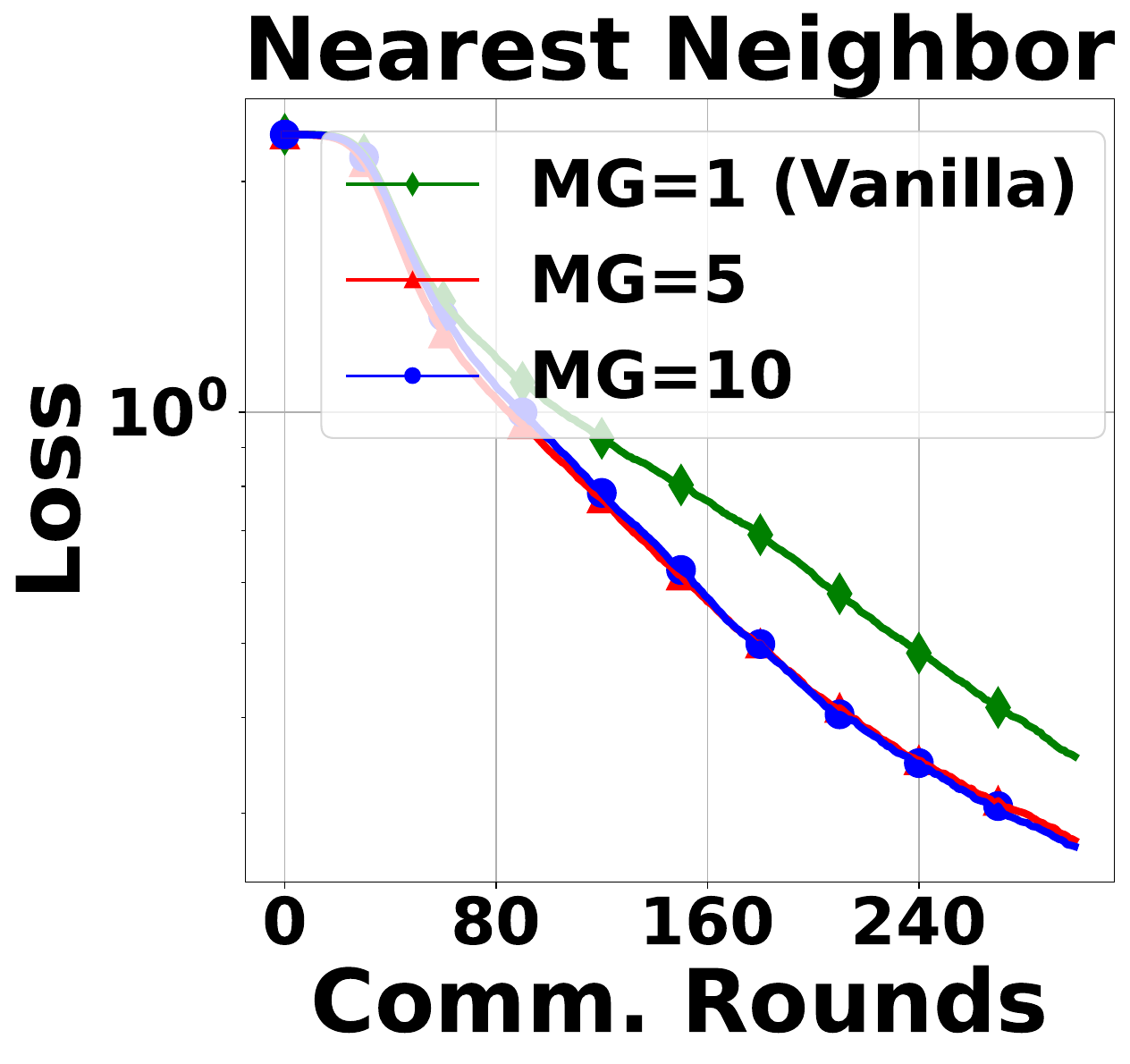}
\vspace{-1mm}
\caption{
Averaged training loss of neural networks on MNIST dataset. Networks trained using MG-\pulldiag-GT and vanilla \pulldiag-GT. Here, ``MG" denotes the number of gossip steps.
}
\label{fig:MG}
\vspace{-1mm}
\end{figure}
\vspace{-2mm}
Figure~\ref{fig:MG} demonstrates that MG-\pulldiag-GT achieves a consistently faster convergence rate in training loss across all tested topologies, while the corresponding test accuracy is detailed in Figure~\ref{fig(app):mg} in Appendix~\ref{app:exp}. 

\subsection{Neural Network for Image Classification}
In the third set of experiments, we conducted training of the ResNet-18 model~\cite{he2016deep} on the CIFAR-10 dataset using a distributed approach. Consistent with our previous MNIST dataset experiment, we evaluated and compared the performance of MG-\pulldiag-GT against the standard \pulldiag-GT over different topologies.
\begin{figure}[ht]
    \centering
        \includegraphics[width=3.9cm]{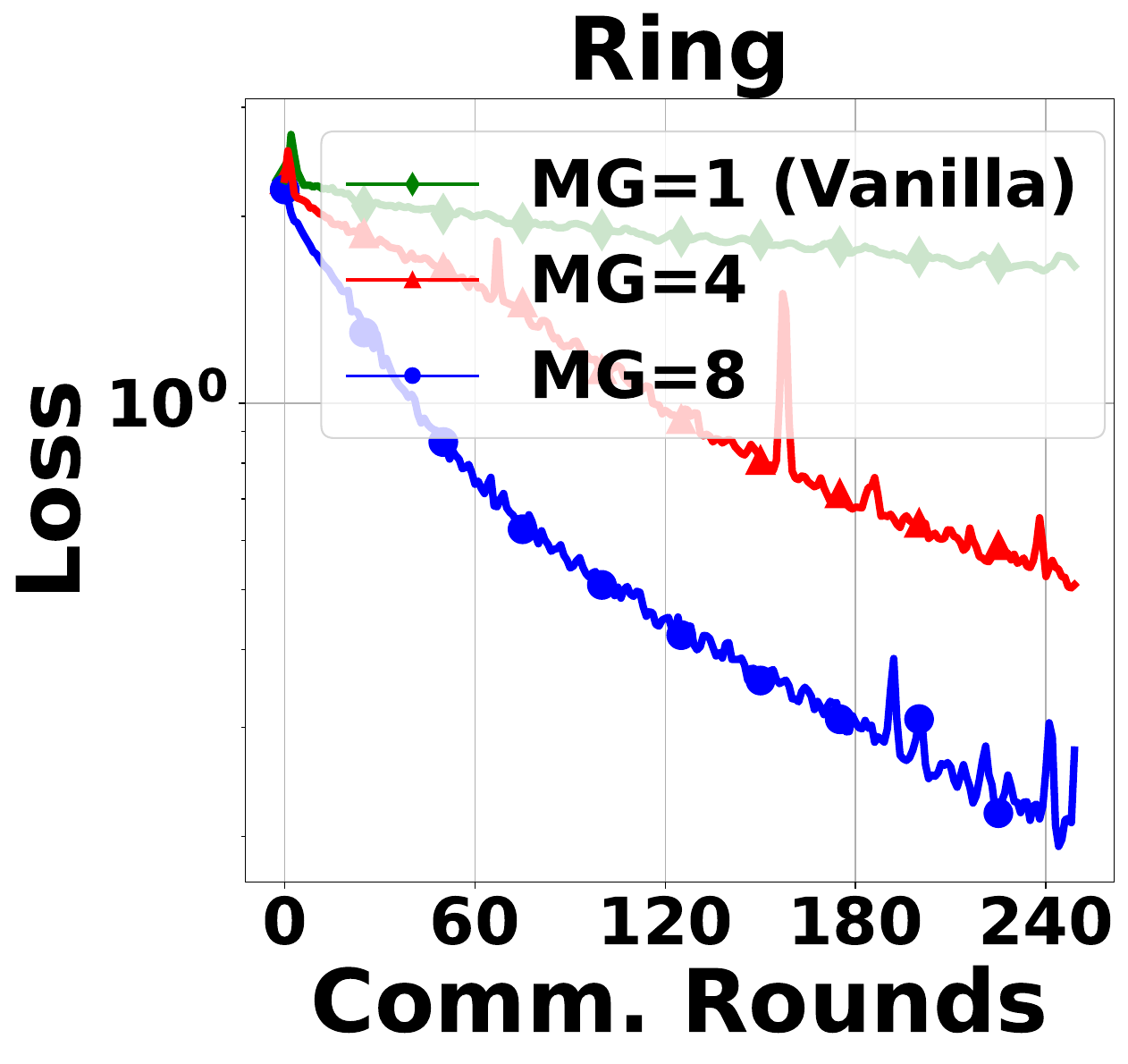}
\hfill
\includegraphics[width=4cm]{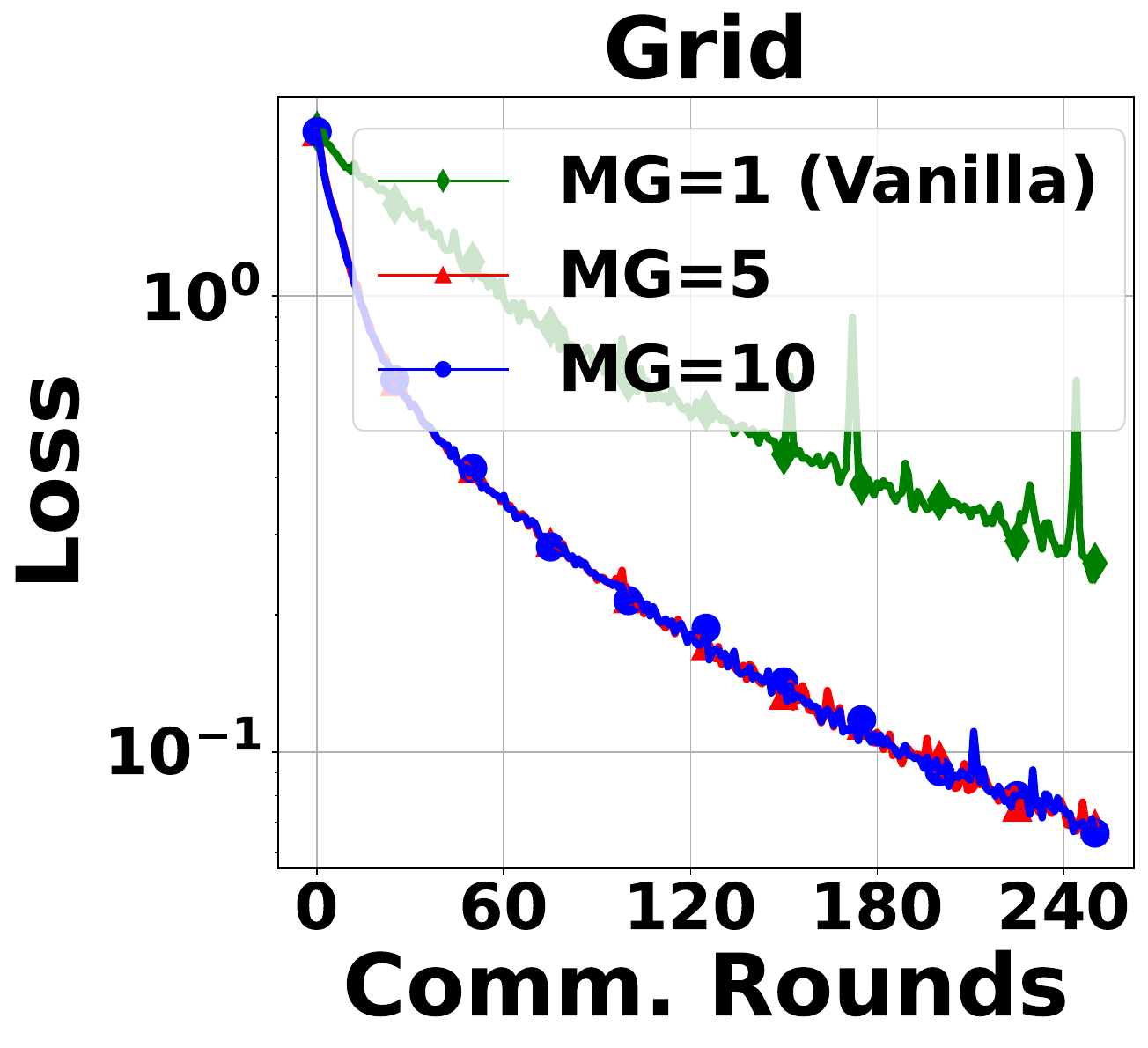}
 \centering
        \includegraphics[width=4cm]{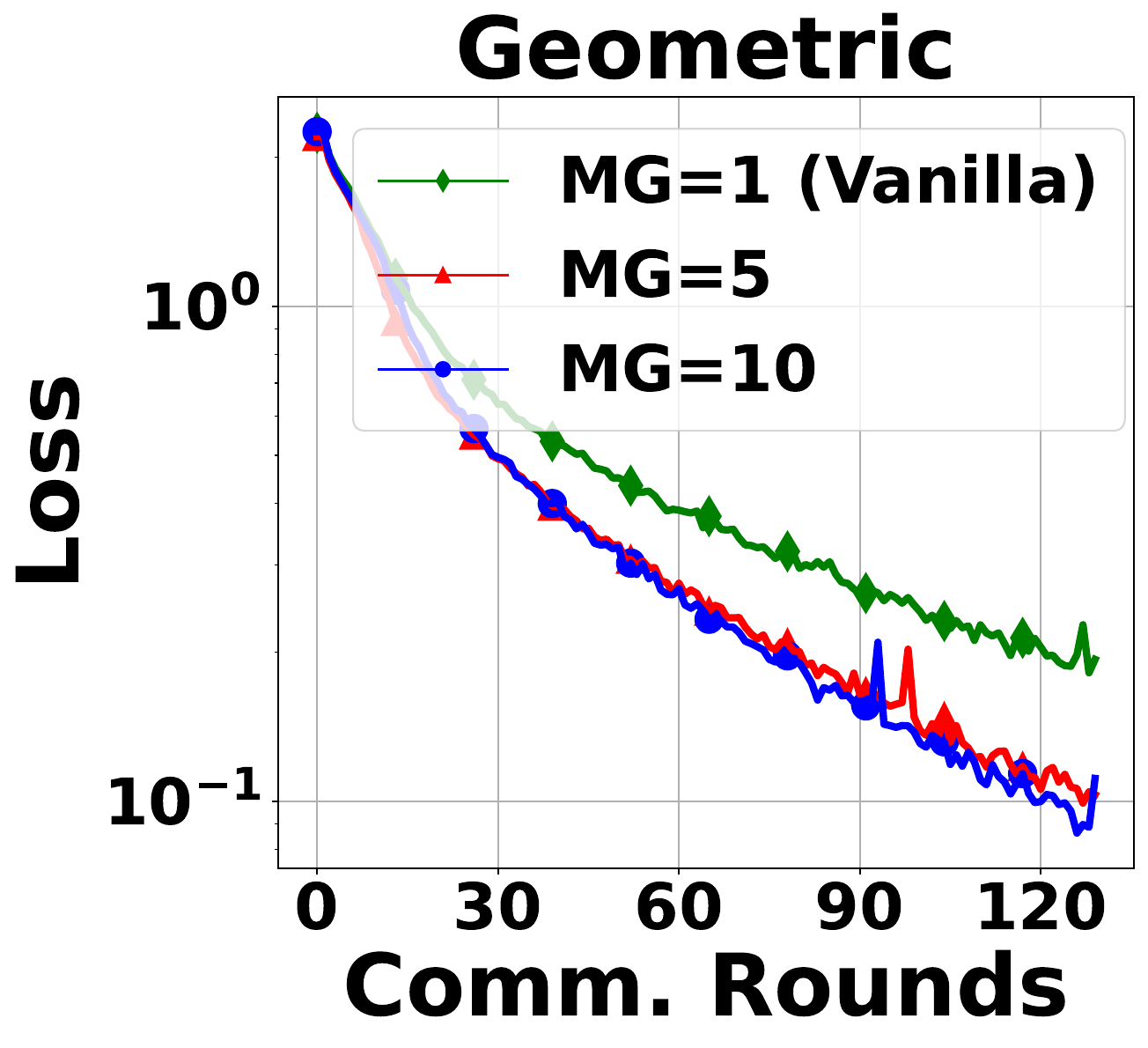}
\hfill
\includegraphics[width=4cm]{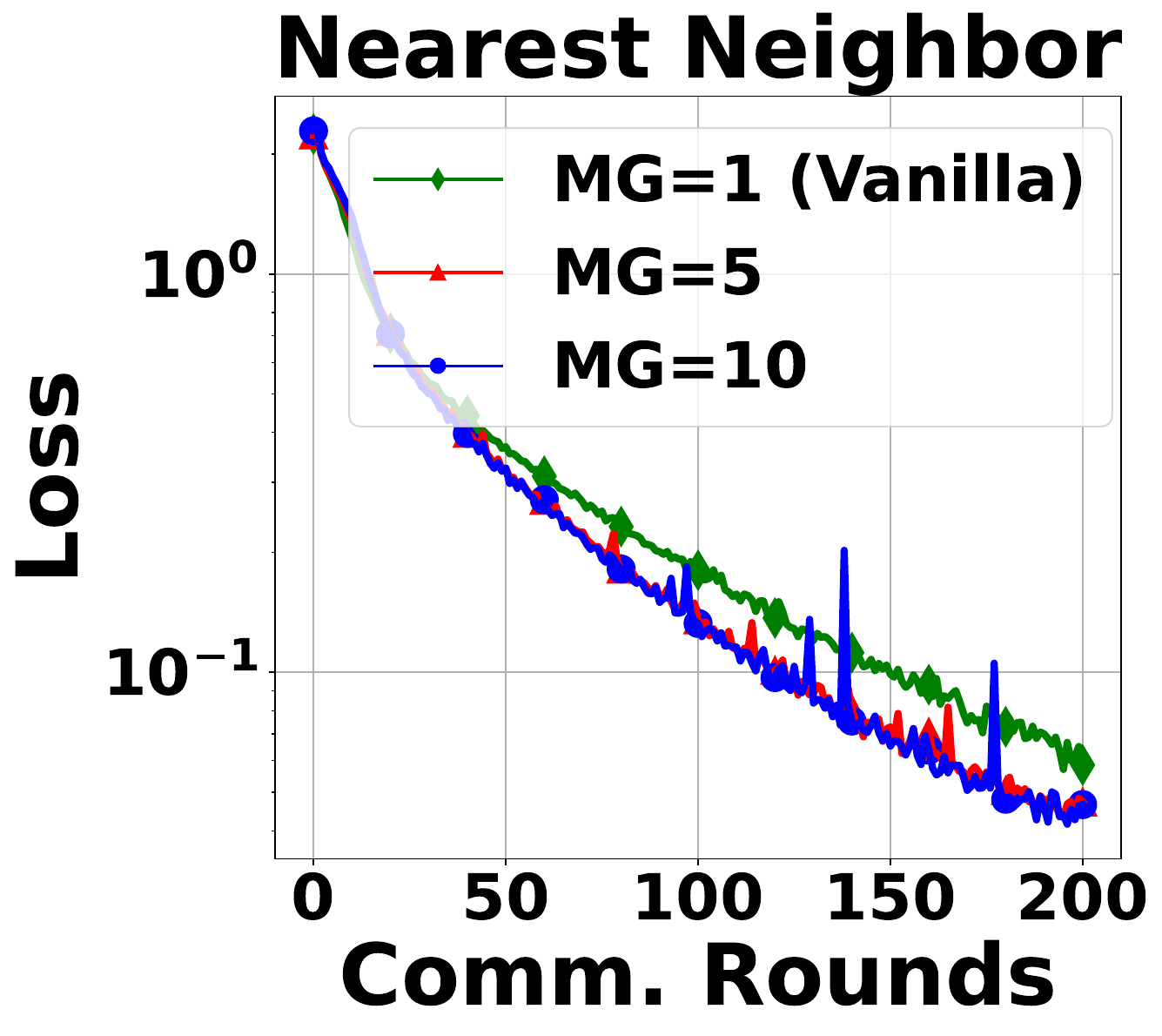}
\caption{
Averaged training loss of neural networks on CIFAR-10 dataset. Networks trained using MG-\pulldiag-GT and vanilla \pulldiag-GT. Here, ``MG" denotes the number of gossip steps.
}
\label{fig:MG2}
\end{figure}
\vspace{-1mm}
Figure~\ref{fig:MG2} illustrates the stability of MG-\pulldiag-GT when applied to a larger real-world dataset. In the context of sparse topologies like the ring and grid graphs, MG-\pulldiag-GT effectively reduces the influence of sparse structures, resulting in superior performance compared to the vanilla \pulldiag-GT. The corresponding test accuracy is detailed in Figure~\ref{fig(app):mg_cifar10} in Appendix~\ref{app:exp}.

\section{Conclusions and Limitations}
In this paper, we investigate nonconvex, stochastic decentralized optimization over row-stochastic networks. We establish the first lower bound on the convergence rate for this setting. Additionally, we present the first linear speedup convergence rate achieved by \pulldiag-GT. To further improve performance, we introduce the multiple gossip technique, leading to the development of MG-\pulldiag-GT. This algorithm matches our lower bound up to a logarithmic gap, rendering both the lower bound and the algorithm nearly optimal. Numerical experiments validate our theoretical findings and demonstrate the effectiveness of our approach. A main limitation of our work is that the network impact on \pulldiag-GT, such as the explicit influence of $\theta_A$, remains unclear, leaving it for future research.
\newpage
\section*{Acknowledgment}
The work is supported by the National Natural Science Foundation of China under Grants 92370121, 12301392, and W2441021.

\section*{Impact Statement}
This paper presents work whose goal is to advance the field of Machine Learning. There are many potential societal consequences of our work, none which we feel must be specifically highlighted here.
\bibliography{Ref}
\bibliographystyle{icml2025}

%%%%%%%%%%%%%%%%%%%%%%%%%%%%%%%%%%%%%%%%%%%%%%%%%%%%%%%%%%%%%%%%%%%%%%%%%%%%%%%
%%%%%%%%%%%%%%%%%%%%%%%%%%%%%%%%%%%%%%%%%%%%%%%%%%%%%%%%%%%%%%%%%%%%%%%%%%%%%%%
% APPENDIX
%%%%%%%%%%%%%%%%%%%%%%%%%%%%%%%%%%%%%%%%%%%%%%%%%%%%%%%%%%%%%%%%%%%%%%%%%%%%%%%
%%%%%%%%%%%%%%%%%%%%%%%%%%%%%%%%%%%%%%%%%%%%%%%%%%%%%%%%%%%%%%%%%%%%%%%%%%%%%%%
\newpage

\appendix

\onecolumn
 
\setcounter{table}{0}   %从0开始编号，显示出来表会A1开始编号
\setcounter{figure}{0}

%定义编号格式，在数字序号前加字符“A"
\renewcommand{\thetable}{A\arabic{table}}
\renewcommand{\thefigure}{A\arabic{figure}}

% \phantomsection % 为超链接添加锚点
% \addcontentsline{toc}{section}{附录目录} % 将“附录目录”添加到目录
% \dominitoc 

% \section*{Appendix Contents} % 附录目录标题
% \minitoc % 打印附录目录
\section*{Content of Appendix}
\noindent
\hspace{0.5cm}\textbf{Appendix A. Lower Bound} \dotfill \hyperlink{Appendix A:Lower Bound}{13} 

\hspace{1cm} A Matrix Example \dotfill \hyperlink{Appendix A:Lower Bound}{13}

\hspace{1cm} Proof of Theorem~\ref{thm: lower-bound} \dotfill \hyperlink{Appendix A:Lower Bound}{13}

\hspace{0.5cm}\textbf{Appendix B. Algorithm Implementation Details} \dotfill \hyperlink{algorithm}{14}

\hspace{0.5cm}\textbf{Appendix C. Convergence of \pulldiag-GT} \dotfill \hyperlink{convergence}{15}

\hspace{1cm} Notations\dotfill \hyperlink{convergence}{15}

\hspace{1cm} Linear Algebra Inequalities \dotfill \hyperlink{linear}{16}

\hspace{1cm} Proof of Lemma~\ref{lem(main):1} \dotfill \hyperlink{proof1}{18}

\hspace{1cm} Estimate Descent Deviation \dotfill \hyperlink{estimate}{19}

\hspace{1cm} Absorb extra $f(w)-f^*$ \dotfill \hyperlink{absorb}{19}

\hspace{1cm} Proof of Lemma~\ref{lem(main):5} \dotfill \hyperlink{proof2}{20}

\hspace{1cm} Proof of Lemma~\ref{lem(main):6} \dotfill \hyperlink{proof3}{20}

\hspace{1cm} Proof of Theorem~\ref{thm(main):pulldiag convergence} \dotfill \hyperlink{proof4}{20}

\hspace{0.5cm}\textbf{Appendix D. Convergence of MG-\pulldiag-GT} \dotfill \hyperlink{conMG}{24}

\hspace{0.5cm}\textbf{Appendix E. Experiment Details}\dotfill \dotfill \hyperlink{ex}{25}
\newpage

\section{Lower Bound}\label{app:lower bound}
\hypertarget{Appendix A:Lower Bound}{}
\subsection{A Matrix Example}
\begin{proposition}\label{app:special-mat}
For any $n\ge 2$, there exists a row-stochastic, primitive matrix $A\in \mathbb{R}^{n\times n}$  satisfying $\beta_A=\frac{\sqrt{2}}{2}$ but $\kappa_A=2^{n-1}$.
\end{proposition}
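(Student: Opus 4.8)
The plan is to exhibit a single explicit matrix and verify the three required properties (row-stochasticity/primitivity, the exact value of $\kappa_A$, and the exact value of $\beta_A$) directly. The guiding observation is that the two metrics can be \emph{decoupled}: $\kappa_A$ depends only on the Perron vector $\pi_A$, so I would first fix $\pi_A$ to force the skewness, and only then choose the off-diagonal structure of $A$ to tune $\beta_A$. To obtain $\kappa_A = 2^{n-1}$, the natural choice is the geometric Perron vector
\[
\pi := \frac{1}{2^n-1}\,(1,\,2,\,4,\,\dots,\,2^{n-1})^\top,
\]
which is a probability vector ($\mathds{1}_n^\top \pi = 1$) with $\max(\pi)/\min(\pi) = 2^{n-1}/1 = 2^{n-1}$.

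The key idea for $\beta_A$ is to pick a matrix that is ``$\pi$-reversible'' and shares its spectral decomposition with $A_\infty$ in the $\pi_A$-weighted geometry. Concretely, I would take the lazy-averaging matrix
\[
A := \tfrac{\sqrt{2}}{2}\, I_n + \bigl(1-\tfrac{\sqrt{2}}{2}\bigr)\,\mathds{1}_n \pi^\top .
\]
Because $\pi^\top \mathds{1}_n = 1$, one checks immediately that $A\mathds{1}_n = \mathds{1}_n$ (row-stochastic) and $\pi^\top A = \pi^\top$, so $\pi$ is indeed the Perron vector $\pi_A$ and $\kappa_A = 2^{n-1}$ as designed. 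Every entry of $A$ is strictly positive (the diagonal is at least $\tfrac{\sqrt2}{2}$ and each off-diagonal equals $(1-\tfrac{\sqrt2}{2})\pi_j > 0$), and all entries lie in $(0,1]$; a strictly positive matrix is irreducible with positive trace, hence primitive, so Assumption~\ref{ass-weight-matrix} holds.

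It then remains to compute $\beta_A$, and this is where the construction pays off: since $A_\infty = \mathds{1}_n\pi^\top$, we have the clean identity $A - A_\infty = \tfrac{\sqrt{2}}{2}(I_n - \mathds{1}_n\pi^\top)$. Writing $u := \Pi_A^{1/2}\mathds{1}_n$, one has $\norm{u}=1$ and $\pi^\top \Pi_A^{-1/2}=u^\top$, so
\[
\beta_A = \bigl\|A - A_\infty\bigr\|_{\pi_A}
= \tfrac{\sqrt{2}}{2}\,\bigl\|\Pi_A^{1/2}(I_n-\mathds{1}_n\pi^\top)\Pi_A^{-1/2}\bigr\|_2
= \tfrac{\sqrt{2}}{2}\,\bigl\|I_n - uu^\top\bigr\|_2 .
\]
Here $I_n - uu^\top$ is the orthogonal projector onto $u^\perp$, which is nonzero for $n\ge 2$, so its spectral norm is exactly $1$, giving $\beta_A = \tfrac{\sqrt{2}}{2}$. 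I do not anticipate a genuine obstacle in this argument; the only point requiring care is the $\beta_A$ computation, namely recognizing that conjugating by $\Pi_A^{1/2}$ turns the rank-one correction $\mathds{1}_n\pi^\top$ into the symmetric projector $uu^\top$, so that $\beta_A$ equals the scalar $\tfrac{\sqrt2}{2}$ \emph{independently} of the chosen $\pi$. This independence is precisely what lets me set the skewness to the extreme value $2^{n-1}$ while pinning the spectral gap at the fixed value $\tfrac{\sqrt2}{2}$.
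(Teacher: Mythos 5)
Your proof is correct, and it takes a genuinely different route from the paper's. The paper does not construct anything: it invokes Proposition 2.5 of \citet{liang2023towards}, which supplies a \emph{column}-stochastic primitive matrix $W$ with $\beta_W=\tfrac{\sqrt2}{2}$ and $\kappa_W=2^{n-1}$, sets $A=W^\top$, and observes that transposition preserves the Perron vector (hence $\kappa_A=\kappa_W$) and swaps the two conjugations $\Pi^{1/2}(\cdot)\Pi^{-1/2}$ and $\Pi^{-1/2}(\cdot)\Pi^{1/2}$ in the respective $\pi$-norms (hence $\beta_A=\beta_W$). Your construction $A=\tfrac{\sqrt2}{2}I_n+(1-\tfrac{\sqrt2}{2})\mathds{1}_n\pi^\top$ is instead explicit and self-contained; all of your verifications check out (in particular $\Pi_A^{1/2}\mathds{1}_n\pi^\top\Pi_A^{-1/2}=uu^\top$ with $\|u\|=1$, so $\beta_A=\tfrac{\sqrt2}{2}\|I_n-uu^\top\|_2=\tfrac{\sqrt2}{2}$, and a strictly positive matrix is trivially primitive). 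What your approach buys is a two-line, reference-free proof in which the decoupling of $\kappa_A$ (via the choice of $\pi$) from $\beta_A$ (via the lazy rank-one structure) is completely transparent. What it gives up is a property the paper silently relies on downstream: your $A$ corresponds to a \emph{complete} graph, whereas the matrix inherited from \citet{liang2023towards} is sparse with directed diameter $\Theta(n)$, and Step~2 of the proof of Theorem~\ref{thm: lower-bound} explicitly uses that the directed distance between the two node groups is $n/3$. So your argument fully proves Proposition~\ref{app:special-mat} as stated, but your matrix could not be substituted for the one actually used in the lower-bound construction.
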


\begin{proof}
    Proposition 2.5 of \citet{liang2023towards} tells us that for any $n\ge 2$, there exists a column-stochastic, primitive matrix $W\in \mathbb{R}^{n\times n}$ satisfying $\beta_W=\frac{\sqrt{2}}{2}$ but $\kappa_W=2^{n-1}$. Taking $A=B^\top$, their Perron vectors are the same, i.e. $\pi_A=\pi_W$. Therefore, $\kappa_A=\kappa_W$. By the definition of the $\pi$-norm, we know that $\beta_A=\norm{A-A_\infty}_{\pi_A}=\norm{\Pi_A^{1/2}(A-A_\infty)\Pi_A^{-1/2}}_2=\norm{(\Pi_W^{-1/2}(W-W_\infty)\Pi_W^{1/2})^\top}_2=\norm{W}_{\pi_W}=\beta_W$. 
\end{proof}
\subsection{Proof of Theorem 1}\label{app:lower-bound}
The core idea of the proof is derived from~\cite{liang2023towards}. The first complexity term, $\Omega(\frac{\sigma\sqrt{L\Delta}}{\sqrt{nK}})$, is standard, and its proof can be found in works such as \citet{lu2021optimal} and \citet{yuan2022revisiting}. Therefore, we concentrate on proving the second term, $\Omega((1+\ln(\kappa_A))L\Delta/K)$. 

To proceed, let $[x]_j$ represent the $j$-th coordinate of a vector $x \in \RR^d$ for $1 \leq j \leq d$, and define:
\vspace{-2pt}
\begin{equation*}\textstyle
    \prog(x):=\begin{cases}
        0 &\text{if }x=0;\\
        \max_{1\leq j\leq d}\{j:[x]_j\neq 0\}&\text{otherwise}.
    \end{cases}
\end{equation*}
We also introduce several important lemmas, which have been established in previous research.
\begin{lemma}[Lemma 2 of \citet{Arjevani2019LowerBF}]\label{lem:basic-fun}
Consider the function 
\vspace{-2pt}
\begin{equation*}\textstyle
    h(x):=-\psi(1) \phi([x]_{1})+\sum_{j=1}^{d-1}\Big(\psi(-[x]_j) \phi(-[x]_{j+1})-\psi([x]_j) \phi([x]_{j+1})\Big)
\end{equation*}
where for any $z \in \mathbb{R}$,
\vspace{-2pt}
$$
\psi(z)=\begin{cases}
0 & z \leq 1 / 2; \\
\exp \left(1-\frac{1}{(2 z-1)^{2}}\right) & z>1 / 2, 
\end{cases} \quad \quad \text{and} \quad \quad  \phi(z)=\sqrt{e} \int_{-\infty}^{z} e^{-\frac{1}{2} t^{2}} \mathrm{d}t.
$$
The function $h(x)$ has the following properties:
\begin{enumerate}
    \item $h$ is zero-chain, i.e., $\prog(\nabla h(x))\leq \prog(x)+1$ for all $x\in\RR^d$.
    \item $h(x)-\inf_{x} h(x)\leq \Delta_0 d$, for all $x\in\RR^d$ with $\Delta_0=12$.
    \item $h$ is $L_0$-smooth with $L_0=152$.
    \item $\|\nabla h(x)\|_\infty\leq G_\infty $, for all $x\in\RR^d$ with $G_\infty = 23$.
    \item $\|\nabla h(x)\|_\infty\ge 1$ for any $x\in\RR^d$ with $[x]_d=0$. 
\end{enumerate}
\end{lemma}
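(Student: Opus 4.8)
The plan is to reduce everything to a few elementary one-variable properties of $\psi$ and $\phi$ and then exploit the local (effectively tridiagonal) coupling structure of $h$, verifying the five properties one at a time. First I would record the analytic facts I intend to reuse: $\psi$ is $C^\infty$ with $\psi(z)=\psi'(z)=0$ for all $z\le 1/2$ (every derivative of $\exp(1-1/(2z-1)^2)$ vanishes as $z\downarrow 1/2$), $\psi$ is nondecreasing with $0\le\psi(z)<e$ and $\psi(1)=\exp(1-1)=1$, and $\psi',\psi''$ are bounded by explicit constants; meanwhile $\phi'(z)=\sqrt{e}\,e^{-z^2/2}\in(0,\sqrt{e}]$, $\phi$ is increasing with $0<\phi(z)<\sqrt{2\pi e}$, and $\phi''(z)=-\sqrt{e}\,z\,e^{-z^2/2}$ satisfies $|\phi''(z)|\le 1$ (the maximum of $|z|e^{-z^2/2}$ is $e^{-1/2}$ at $z=1$). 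These monovariate bounds are the only genuine analytic input; everything else is bookkeeping.

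Next I would differentiate $h$ explicitly. A direct computation shows that $[\nabla h(x)]_j$ depends only on the triple $([x]_{j-1},[x]_j,[x]_{j+1})$, and that every summand of $[\nabla h(x)]_j$ has the form $-(\text{nonnegative }\psi\text{- or }\psi'\text{-factor})\times(\text{positive }\phi'\text{- or }\phi\text{-factor})$; in particular $[\nabla h(x)]_j\le 0$ for all $j$. Property~1 then follows immediately: if $\prog(x)=i$, then for $j\ge i+2$ all three relevant coordinates vanish, so the $\psi(\cdot)$ and $\psi'(\cdot)$ factors equal $\psi(0)=\psi'(0)=0$, giving $[\nabla h(x)]_j=0$ and hence $\prog(\nabla h(x))\le i+1$. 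Property~4 follows by bounding each of the at most four terms of $[\nabla h(x)]_j$ by the product of the corresponding suprema, i.e. $\norm{\nabla h(x)}_\infty\le 2\psi_{\max}\phi'_{\max}+2\psi'_{\max}\phi_{\max}$, and checking that the arithmetic yields $G_\infty=23$.

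For Property~3 I would use that $\nabla^2 h(x)$ is tridiagonal (the same local coupling), so its spectral norm is at most its maximal absolute row sum; bounding the individual second-order entries through the products $\psi\phi''$, $\psi'\phi'$ and $\psi''\phi$ and summing at most three entries per row gives $L_0=152$. Property~2 I would obtain by bounding $h$ above and below: each of the $2(d-1)+1$ product terms lies in a bounded interval determined by the suprema of $\psi$ and $\phi$, so $\sup_x h-\inf_x h=O(d)$, and tracking the constants yields the range bound $\Delta_0 d=12d$.

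The genuine content is Property~5, and this is where I expect the main obstacle (conceptual rather than computational) to lie. Working with the contrapositive, suppose $\norm{\nabla h(x)}_\infty<1$; I would prove $|[x]_j|>1$ for every $j\in[d]$ by induction on $j$, which already contradicts $[x]_d=0$. Since $\phi'(-[x]_j)=\phi'([x]_j)=\sqrt{e}\,e^{-[x]_j^2/2}$, collecting the first two (nonnegative) terms of $[\nabla h(x)]_j$ gives, for $j\ge 2$, the estimate $|[\nabla h(x)]_j|\ge\bigl(\psi(-[x]_{j-1})+\psi([x]_{j-1})\bigr)\sqrt{e}\,e^{-[x]_j^2/2}=\psi(|[x]_{j-1}|)\,\sqrt{e}\,e^{-[x]_j^2/2}$, using the identity $\psi(z)+\psi(-z)=\psi(|z|)$; the base case $j=1$ uses the term $-\psi(1)\phi'([x]_1)$ directly. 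Because $\psi(1)=1$ and $\psi$ is nondecreasing, the inductive hypothesis $|[x]_{j-1}|>1$ forces $\psi(|[x]_{j-1}|)\ge 1$, so $1>|[\nabla h(x)]_j|\ge\sqrt{e}\,e^{-[x]_j^2/2}$, which rearranges to $|[x]_j|>1$. I expect this induction to be the crux: the nonnegativity of the gradient terms and the identity $\psi(z)+\psi(-z)=\psi(|z|)$ make it go through cleanly, but isolating the right monovariate lower bound is the step that carries the ``hardness.'' By contrast, the only real nuisance elsewhere is pinning down the explicit suprema of $\psi'$ and $\psi''$ needed to certify the exact constants $G_\infty=23$ and $L_0=152$.
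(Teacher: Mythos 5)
The paper never proves this lemma: it is imported verbatim as Lemma~2 of Arjevani et al.\ (2019), so there is no in-paper argument to compare against. Your sketch reconstructs the standard proof from that literature, and its core is sound. In particular the Property~5 induction — sign-definiteness of every term of $[\nabla h(x)]_j$, the identity $\psi(z)+\psi(-z)=\psi(|z|)$, the evenness of $\phi'$, and the chain $|[x]_{j-1}|>1\Rightarrow\psi(|[x]_{j-1}|)\ge\psi(1)=1\Rightarrow\sqrt{e}\,e^{-[x]_j^2/2}<1\Rightarrow|[x]_j|>1$ — is exactly the right argument and is the genuinely load-bearing step. Property~1 via the tridiagonal coupling and $\psi(0)=\psi'(0)=0$ is likewise correct.

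There are, however, two quantitative slips. For Property~4 your stated bound $2\psi_{\max}\phi'_{\max}+2\psi'_{\max}\phi_{\max}$ does \emph{not} yield $23$: with $\psi_{\max}=e$, $\phi'_{\max}=\sqrt{e}$, $\phi_{\max}=\sqrt{2\pi e}$ and $\psi'_{\max}=\sqrt{54/e}$ it evaluates to about $45.8$. You must drop both factors of $2$ by observing that $\psi(z)$ and $\psi(-z)$ (and likewise $\psi'(z)$ and $\psi'(-z)$) have disjoint supports, so at most one term of each pair is active; this gives $e^{3/2}+\sqrt{108\pi}\approx 22.9<23$. This is precisely the observation you already deploy in Property~5, so the fix is immediate — but as written the arithmetic fails. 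For Property~2, a crude $\sup_x h-\inf_x h$ bound cannot give the constant $12$: sending all coordinates to $-\infty$ makes $h\to(d-1)e\sqrt{2\pi e}\approx 11.2(d-1)$ while sending them to $+\infty$ gives $h\to-\sqrt{2\pi e}-(d-1)e\sqrt{2\pi e}$, so the full range is roughly $23d$. The version that actually holds with $\Delta_0=12$ (and the only one the paper's lower-bound construction uses, via $f(0)-\inf_x f(x)$) is $h(0)-\inf_x h(x)\le 12d$, since $h(0)=-\phi(0)$; the ``for all $x$'' phrasing in the restated lemma is itself an overstatement of the original, so your bound should be anchored at $x=0$ rather than at the supremum.
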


\begin{lemma}[Lemma 4 of \cite{huang2022lower}]\label{lem:basic-fun2}
Letting functions 
\vspace{-2pt}
\begin{equation*}\textstyle
    h_1(x):=-2\psi(1) \phi([x]_{1})+2\sum_{j \text{ even, } 0< j<d}\Big(\psi(-[x]_j) \phi(-[x]_{j+1})-\psi([x]_j) \phi([x]_{j+1})\Big)
\end{equation*}
and 
\vspace{-2pt}
\begin{equation*}\textstyle
    h_2(x):=2\sum_{j \text{ odd, } 0<j<d}\Big(\psi(-[x]_j) \phi(-[x]_{j+1})-\psi([x]_j) \phi([x]_{j+1})\Big),
\end{equation*}
then $h_1$ and $h_2$ satisfy the following properties:
\begin{enumerate}
    % \item Both $h_1$ and $h_2$ are zero-chain functions.
    \item $\frac{1}{2}(h_1+h_2)=h$, where $h$ is defined in Lemma \ref{lem:basic-fun}.
    \item $h_1$ and $h_2$ are zero-chain, \ie, $\prog(\nabla h_i(x))\leq \prog(x)+1$ for all $x\in\RR^d$ and $i=1,2$. Furthermore, if $\prog(x)$ is odd, then $\prog(\nabla h_1(x))\leq \prog(x)$; if $\prog(x)$ is even, then $\prog(\nabla h_2(x))\leq \prog(x)$.
    \item $h_1$ and $h_2$ are also $L_0$-smooth with ${L_0}=152$. 
\end{enumerate}
\end{lemma}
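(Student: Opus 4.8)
The plan is to establish the three properties in order, reusing the structure of $h$ from Lemma~\ref{lem:basic-fun}. Abbreviate the coupling block $S_j(x):=\psi(-[x]_j)\phi(-[x]_{j+1})-\psi([x]_j)\phi([x]_{j+1})$, so that $h(x)=-\psi(1)\phi([x]_1)+\sum_{j=1}^{d-1}S_j(x)$, while $\tfrac12 h_1=-\psi(1)\phi([x]_1)+\sum_{\text{even }0<j<d}S_j$ and $\tfrac12 h_2=\sum_{\text{odd }0<j<d}S_j$. \textbf{Property 1} is then immediate: the even and odd indices partition $\{1,\dots,d-1\}$, so $\tfrac12(h_1+h_2)=-\psi(1)\phi([x]_1)+\sum_{j=1}^{d-1}S_j=h$.

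For \textbf{Property 2} I would track which blocks can make a gradient coordinate nonzero. In $h_1$ the coordinate $[x]_\ell$ occurs only through the even-indexed blocks $S_j$ (the pairs $(2,3),(4,5),\dots$) and the isolated term in $[x]_1$, and in $h_2$ only through the odd-indexed blocks $(1,2),(3,4),\dots$. Writing out $[\nabla h_i]_\ell$, it collects contributions solely from the at most two blocks touching coordinate $\ell$, each contribution carrying a factor of the form $\psi(\pm[x]_{\ell-1})$ or $\psi'(\pm[x]_\ell)$. Set $m:=\prog(x)$, so $[x]_\ell=0$ for $\ell>m$; since $\psi\equiv0$ on $(-\infty,1/2]$ we have $\psi(0)=\psi'(0)=0$, and hence every such contribution vanishes once $\ell\ge m+2$, giving the general zero-chain bound $\prog(\nabla h_i)\le m+1$. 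For the refinement, examine $\ell=m+1$: the only way to make $[\nabla h_i]_{m+1}\neq0$ is through the block $(m,m+1)$ acting via its term $\psi(\pm[x]_m)\phi'(\pm[x]_{m+1})$, which is nonzero only when $|[x]_m|>1/2$; the other candidate block $(m+1,m+2)$ contributes through $\psi'(\pm[x]_{m+1})=\psi'(0)=0$ and is inert. Since the block $(m,m+1)$ belongs to $h_1$ exactly when $m$ is even and to $h_2$ exactly when $m$ is odd, we conclude $\prog(\nabla h_1)\le m$ when $\prog(x)$ is odd and $\prog(\nabla h_2)\le m$ when $\prog(x)$ is even.

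For \textbf{Property 3} the decisive structural fact is that, unlike $h$ whose Hessian is tridiagonal because consecutive blocks share a coordinate, each of $h_1$ and $h_2$ is a sum over \emph{disjoint} coordinate pairs (together with the lone coordinate $[x]_1$ in $h_1$). Their Hessians are therefore block-diagonal, so $\norm{\nabla^2 h_i}_2$ equals the largest spectral norm among the $2\times2$ blocks $2\nabla^2 S_j$ and the scalar block from the $[x]_1$ term. I would bound each $\norm{\nabla^2 S_j}_2$ entrywise, using the uniform bounds on $\psi,\psi',\psi''$ and $\phi,\phi',\phi''$ that underlie the smoothness estimate for $h$ in Lemma~\ref{lem:basic-fun}, and noting that at each point at most one of $\psi(\pm\cdot),\psi'(\pm\cdot),\psi''(\pm\cdot)$ is nonzero since $\psi$ is supported on $(1/2,\infty)$. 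This yields $L_0$-smoothness for both $h_1$ and $h_2$.

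The step I expect to be the main obstacle is pinning down the exact constant $L_0=152$ in Property 3. For $h$ the bound is obtained from the tridiagonal Hessian, where each interior coordinate is shared by two overlapping blocks, whereas for $h_1,h_2$ the blocks are disjoint but carry an explicit factor $2$; one must therefore verify that the spectral norm of $2\nabla^2 S_j$ is controlled by the very same entrywise estimates that certify $152$ for $h$, rather than by the cruder row-sum bound. Matching this constant cleanly, reconciling the factor-$2$ scaling against the doubling induced by overlap in $h$, is the delicate accounting; the remaining verifications (Properties 1 and 2) are routine once the block structure is made explicit.
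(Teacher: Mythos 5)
The paper does not prove this lemma at all: it is imported verbatim as Lemma~4 of \cite{huang2022lower} (and ultimately rests on the construction of \citet{Arjevani2019LowerBF}), so there is no in-paper proof to compare against. Judged on its own terms, your reconstruction is essentially the standard argument and is correct where it is complete. Property~1 is indeed immediate from the even/odd partition of $\{1,\dots,d-1\}$. Your Property~2 argument is sound: $\partial_\ell$ of the block $S_j$ carries a factor $\psi'(\pm[x]_\ell)$ when $j=\ell$ and $\psi(\pm[x]_{\ell-1})$ when $j=\ell-1$, both of which vanish at $0$ because $\psi\equiv 0$ on $(-\infty,1/2]$, and the parity refinement follows exactly as you say because the unique block that could activate coordinate $m+1$ is $S_m$ (with the lone $[x]_1$ term playing the role of $S_0$ inside $h_1$), which lives in $h_1$ iff $m$ is even.

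The one place your write-up stops short of a proof is Property~3: you describe how you \emph{would} bound the $2\times 2$ Hessian blocks but do not carry out the estimate, and you flag the constant $152$ as the obstacle. It is worth noting that the accounting you worry about does close cleanly. For the tridiagonal $\nabla^2 h$, the symmetric row-sum bound $\|\nabla^2 h\|_2\le\max_i\sum_j|\partial_{ij}h|$ is controlled row by row, and an interior row $\ell$ aggregates the contributions of the \emph{two} overlapping blocks $S_{\ell-1}$ and $S_\ell$, i.e.\ a sum of two per-block row contributions. For $h_1$ or $h_2$ the Hessian is block-diagonal and row $\ell$ receives exactly $2\times$ the contribution of the \emph{single} block containing $\ell$. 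Since each per-block row contribution is bounded by the same suprema of $\psi,\psi',\psi'',\phi,\phi',\phi''$ used for $h$, both row sums obey the identical bound ``$2\times(\text{one block's row contribution})$'', and the constant $L_0=152$ transfers without change. Making that two-line comparison explicit would turn your sketch of Property~3 into a complete proof; as written, it is a correct plan with the decisive step left unverified.
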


We are now ready to prove our lower bound. This proceeds in three steps. Without loss of generality, we assume $n$ can be divided by $3$.

\vspace{2mm}
\noindent (Step 1.) We let $f_i=L\lambda^2 h_1(x/\lambda)/L_0$, $\forall\,i\in E_1\triangleq\{j:1\leq j\leq n/3\}$ and $f_i=L\lambda^2 h_2(x/\lambda)/L_0$, $\forall\,i\in E_2\triangleq\{j:2n/3 \leq j\leq n\}$, where $h_1$ and $h_2$ are defined in Lemma \ref{lem:basic-fun2}, and $\lambda>0$ will be specified later. By the definitions of $h_1$ and $h_2$, we have that $f_i$, $\forall\,1\leq i\leq n$, is  zero-chain  and $f(x)=n^{-1}\sum_{i=1}^n f_i(x)=2L\lambda^2 h(x/\lambda)/3L_0$. Since $h_1$ and $h_2$ are also $L_0$-smooth, $\{f_i\}_{i=1}^n$ are $L$-smooth. Furthermore, since
\vspace{-2pt}
\begin{equation*}\textstyle
    f(0)-\inf_x f(x)=\frac{2L\lambda^2}{3L_0}(h(0)-\inf_x h(x)) {\leq}\frac{L\lambda^2\Delta_0d}{L_0},
\end{equation*}
to ensure $\{f_i\}_{i=1}^n$ satisfy L-smooth Assumption, it suffices to let
\vspace{-2pt}
\begin{equation}\label{eqn:jgowemw}\textstyle
    \frac{L\lambda^2\Delta_0d}{L_0}\leq  \Delta, \quad \text{\ie,}\quad \lambda\leq \sqrt{\frac{L_0 \Delta}{L\Delta_0 d}}.
\end{equation}
With the functions defined above, we have  $f(x)=n^{-1}\sum_{i=1}^n f_i(x)=L\lambda^2 l(x/\lambda)/(3L_0)$ and $\prog(\nabla f_i(x)) =\prog(x) +1$ if $\prog(x)$ is even and $i\in E_1$ or $\prog(x)$  is odd  and $i\in E_2$, otherwise $\prog(\nabla f_i(x))\leq \prog(x) $.
Therefore, to make progress (\ie, to increase $\prog(x)$), for any gossip algorithm $\sA\in\cA_{W}$, one must take the gossip communication protocol to transmit information between $E_1$ and $E_2$ alternatively.

\vspace{2mm}
\noindent (Step 2.) We consider the noiseless gradient oracles and the constructed mixing matrix $W$ in Subsection~\ref{app:special-mat} with $\epsilon=2\beta_A^2-1$ so that $\frac{1+\ln(\kappa_A)}{1-\beta_A}=O(n)$. Note the directed distance from $E_1$ to $E_2$ is $n/3$. Consequently,  starting from $x^{(0)}=0$, it takes of at least $n/3$ communications for any possible algorithm $\sA\in\cA_A$ to increase  $\prog(\hat{x})$  by $1$ if it is odd. Therefore, we have
$
    \left \lceil \prog(\hat x^{(k)})/2\right \rceil \leq \left\lfloor \frac{k}{2n/3}\right\rfloor,\forall\,k\geq 0.
$
This further implies
\vspace{-2pt}
\begin{equation}\label{eqn:jofqsfdq}\textstyle
    \prog(\hat x^{(k)})\leq 2\left\lfloor \frac{k}{2n/3}\right\rfloor+1\leq  3k/n+1,\quad \forall\,k\geq 0.
\end{equation}

\vspace{2mm}
\noindent (Step 3.)
We finally show the error $\EE[\|\nabla f(x)\|^2]$ is lower bounded by $\Omega\left(\frac{(1+\ln(\kappa_A))L\Delta }{(1-\beta_A)K}\right)$, with any algorithm $\sA\in\cA_{W}$ with $K$ communication rounds. 
For any $K\geq n$, we set
$
    d= 2\left\lfloor \frac{K}{2n/3}\right\rfloor+2 \leq 3K/n+2\leq 5K/n
$
and
$
     \lambda =\left(\frac{nL_0 \Delta}{5L \Delta_0 K}\right)^{{1}/{2}}.
$
Then \eqref{eqn:jgowemw} naturally holds.
Since $\prog(\hat x^{(K)})<d$ by \eqref{eqn:jofqsfdq}, using the last point of  Lemma~\ref{lem:basic-fun} and the value of $\lambda$, we obtain
\vspace{-2pt}
\begin{equation*}\textstyle
    \EE[\|\nabla f(\hat{x})\|^2]\geq\min_{[\hat{x}]_{d}=0}\|\nabla f(\hat{x})\|^2\geq \frac{L^2\lambda^2}{9L_0^2}=\Omega\left(\frac{nL\Delta }{K}\right).
\end{equation*}
Finally, by using $n=\Omega((1+\ln(\kappa_A))/(1-\beta_A))$, we complete the proof of Theorem~\ref{thm: lower-bound}.

\section{Algorithm Implementation Details}\label{app:Implementation Details}
\hypertarget{algorithm}{}
We provide complete expression and pseudo code for \pulldiag protocol, \pulldiag-GT and MG-\pulldiag-GT here. 

For \pulldiag protocol, the iteration is:
\begin{subequations}
\begin{align*}
    V_{k+1} &= A V_{k}, \ \quad D_{k+1} = \mathrm{Diag}(n V_{k+1}),\\
    % \Tilde{D}_{k+1}&=A\Tilde{D}_k, \\
    \vz^{(k+1)} &= n^{-1}V_{k+1} D_{k+1}^{-1}\vz.  
\end{align*}  
\end{subequations}
where $V_0=I_n$.

\begin{algorithm}[h]
\caption{\pulldiag protocol} \label{algorithm:pulldiag}
\begin{algorithmic}
\REQUIRE Initialize $\bm{v}_i^{(0)}=\bm{e}_i$, $\bm{z}_i$.
\FOR{$k=0,1,\dots, K-1$, each node $i$ in parallel}
\STATE Update $\bm{v}_i^{(k+1)}=\sum_{j\in \mathcal{N}_i^{\mathrm{in}}}a_{ij}\bm{v}_j^{(k)}$;
\ENDFOR
\STATE Update $\bm{z}_i^{(0)}=\bm{z}_i/{n}v_{ii}^{(K)}$;
\FOR{$k=0,1,\dots, K-1$, each node $i$ in parallel}
\STATE Update $\bm{z}_i^{(k+1)}=\sum_{j\in \mathcal{N}_i^{\mathrm{in}}}a_{ij}\bm{z}_j^{(k)}$;
\ENDFOR
\end{algorithmic}
\end{algorithm}

For \pulldiag-GT, the iteration is:
\begin{subequations}
    \begin{align}
        &\vx^{(k+1)}=A(\vx^{(k)}-\alpha \vy^{(k)})\\
        &\Tilde{D}_{k+1}=A\Tilde{D}_k\\
        &D_k=\mathrm{Diag}(\Tilde{D}_k)\\
        &\vy^{(k+1)}=A(\vy^{(k)}+D_{k+1}^{-1}\vg^{(k+1)}-D_{k}^{-1}\vg^{(k)})
    \end{align}
\end{subequations}
where $\Tilde{D}_0=I_n$.

\begin{algorithm}[h]
\caption{\pulldiag-GT} \label{algorithm:pulldiag-GT}
\begin{algorithmic}
\REQUIRE Initialize $\bm{v}_i^{(0,0)}=\bm{e}_i$, $\vw^{(0)}=\vx^{(0)}$, $\bm{g}_i^{(0)}=\bm{y}_i^{(0)}=\nabla F(x^{(0)};\xi_i^{(0)})$, the mixing matrix $A=[a_{ij}]_{n\times n}$.
\FOR{$k=0,1,\dots,K-1$, each node $i$ in parallel}
\STATE Let $\bm{\phi}^{(k+1)}=\bm{x}_i^{(k)}-\gamma \bm{y}_i^{(k)}$;
\STATE Update $\bm{x}^{(k+1)}_i=\sum_{j\in \mathcal{N}_i^{\mathrm{in}} } a_{ij}\bm{\phi}^{(k+1)}_j $ { and $\bm{v}^{(k+1)}_i=\sum_{j\in \mathcal{N}_i^{\mathrm{in}} } a_{ij} \bm{v}^{(k)}_j$};

\STATE Compute $\bm{g}_i^{(k+1)}=\nabla F(\bm{x}_i^{(k+1)};\xi_i^{(k+1)})$; 
 \STATE Let $\bm{\psi}^{(k+1)}_i=\bm{y}^{(k)}_i+[\bm{v}^{(k+1)}_i]_i^{-1}\bm{g}^{(t+1)}_i-[\bm{v}^{(k)}_i]_i^{-1}\bm{g}^{(t)}_i$;

\STATE Update $\bm{y}^{(k+1)}_i=\sum_{j\in \mathcal{N}_i^{\mathrm{in}} } a_{ij}\bm{\psi}^{(k+1)}_j$; 
\ENDFOR
\end{algorithmic}
\end{algorithm}

For MG-\pulldiag-GT, its iteration runs as:
\begin{subequations}
    \begin{align}
        &\vx^{(t+1)}=A^R(\vx^{(t)}-\alpha \vy^{(t)})\\
        &\Tilde{D}_{t+1}=A^R\Tilde{D}_t\\
        &D_t=\mathrm{Diag}(\Tilde{D}_t)\\
        &\vg^{(t+1)}=\frac{1}{R}\sum_{r=1}^R \nabla F(\vx^{(t+1)},\bxi^{(t+1,r)})\\&\vy^{(t+1)}=A^R(\vy^{(t)}+D_{t+1}^{-1}\vg^{(t+1)}-D_{t}^{-1}\vg^{(t)})
    \end{align}
\end{subequations}
where $\Tilde{D}_0=I_n$.

\begin{algorithm}[h]
\caption{MG-\pulldiag-GT: \pulldiag-GT with multi-round gossip} \label{algorithm:MG-pulldiag-GT}
\begin{algorithmic}
\REQUIRE Initialize $\bm{v}_i^{(0,0)}=\bm{e}_i$, $\vw^{(0)}=\vx^{(0)}$, $\bm{g}_i^{(0)}=\bm{y}_i^{(0)}=\frac{1}{R}\sum_{r=1}^{R}\nabla F(x^{(0)};\xi_i^{(0,r)})$, the mixing matrix $A=[a_{ij}]_{n\times n}$, the multi-round number $R$.
\FOR{$t=0,1,\dots,(K/R)-1$, each node $i$ in parallel}
\STATE Let $\bm{\phi}^{(t+1,0)}=\bm{x}_i^{(t)}-\gamma \bm{y}_i^{(t)}$;
\FOR{$r=0,1,\dots,R-1$, each node $i$ in parallel}
\STATE Update $\bm{\phi}^{(t+1,r+1)}_i=\sum_{j\in \mathcal{N}_i^{\mathrm{in}} } a_{ij}\bm{\phi}^{(t+1,r)}_j $ { and $\bm{v}^{(t,r+1)}_i=\sum_{j\in \mathcal{N}_i^{\mathrm{in}} } a_{ij} \bm{v}^{(t,r)}_j$};

\ENDFOR
 \STATE Update $\bm{x}_i^{(t+1)}=\bm{\phi}^{(t+1,R)}_i$, $\bm{v}^{(t+1,0)}_i=\bm{v}^{(t,R)}_i$;

\STATE Compute $\bm{g}_i^{(t+1)}=\frac{1}{R}\sum_{r=1}^{R}\nabla F(bm{x}_i^{(t+1)};\xi_i^{(t+1,r)})$; 
 \STATE Let $\bm{\psi}^{(t+1,0)}_i=\bm{y}^{(t)}_i+[\bm{v}^{(t+1,0)}_i]_i^{-1}\bm{g}^{(t+1)}_i-[\bm{v}^{(t,0)}_i]_i^{-1}\bm{g}^{(t)}_i$;
\FOR{$r=0,1,\dots,R-1$, each node $i$ in parallel}
\STATE Update $\bm{\psi}^{(t+1,r+1)}_i=\sum_{j\in \mathcal{N}_i^{\mathrm{in}} } a_{ij}\bm{\psi}^{(t+1,r)}_j $; 
\ENDFOR
\STATE Update $\bm{y}^{(t+1)}_i=\bm{\psi}_i^{(t+1,R)}$; 
\ENDFOR
\end{algorithmic}
\end{algorithm}

\section{Convergence of \pulldiag-GT}\label{app:pulldiagGT}
\hypertarget{convergence}{}
\subsection{Notation}
\hypertarget{convergence}{}
Most of notations in the proof are the same with notations defined in Section~\ref{sec:intro}. We repeat them as follows:

We denote \(\one\) as an \(n\)-dimensional all-ones vector. We define $I_n\in\mathbb{R}^{n\times n}$ as the identity matrix. Throughout the paper, \(A\) is always a row-stochastic matrix, \ie, $A\one =\one$. We denote \([n]\) as the index set \(\{1, 2, \dots, n\}\). We denote $\mathrm{Diag}(A)$ as the the diagonal matrix generated from the diagonal entries of $A$. We denote $\mathrm{diag}(v)$ as the diagonal matrix whose diagonal entries comes from vector $v$. We denote $\pi_A$ as the left Perron vector of $A$. We denote $\Pi_A=\mathrm{diag}(\pi_A)$, $\pi_A$-vector norm $\norm{v}_{\pi_A}=\norm{\Pi_A^{1/2}v}$ and the induced $\pi_A$-matrix norm as $\norm{W}_{\pi_A}=\norm{\Pi_A^{1/2}W\Pi_A^{-1/2}}_2$. We define $A_\infty=\one\pi_A^\top$, $\beta_A=\norm{A-A_\infty}_{\pi_A}$, $\kappa_A=\max(\pi_A)/\min(\pi_A)$. Throughout the paper, we let $\bm{x}^{(k)}_{i}\in \mathbb{R}^d$ denote the local model copy at node $i$ at iteration $k$. Furthermore, we define the matrices
\vspace{-2pt}
\begin{align*}
    \vx^{(k)} &:= [(\bm{x}_1^{(k)})^\top; (\bm{x}_2^{(k)})^\top; \cdots; (\bm{x}_n^{(k)})^\top]\in \mathbb{R}^{n\times d}, \\
    \nabla F(\vx^{(k)};\bxi^{(k)}) &:= [\nabla F_1(\bm{x}_1^{(k)};\xi_1^{(k)})^\top; \cdots; \nabla F_n(\bm{x}_n^{(k)};\xi_n^{(k)})^\top]\in \mathbb{R}^{n\times d},
    \\ \nabla \vf_k &:= [\nabla f_1(\bm{x}_1^{(k)})^\top; \nabla f_2(\bm{x}_2^{(k)})^\top; \cdots; \nabla f_n(\bm{x}_n^{(k)})^\top]\in \mathbb{R}^{n\times d}, 
\end{align*}
by stacking all local variables. The upright bold symbols (e.g. $\vx,\vw,\vg \in \mathbb{R}^{n\times d}$) always denote stacked network-level quantities.

  Besides, we define \( \overline{\nabla f}(\vx^{(k)})=n^{-1}\one^\top \nabla f(\vx^{(k)})\), $\Delta_x^{(k)}:=(I-A_\infty)\vx^{(k)}$, $\Delta_g^{(k)}=\vg^{(k+1)}-\vg^{(k)}, \forall k\ge 0$, $\Delta_g^{(-1)}=\vg^{(0)}$.

\subsection{Linear Algebra Inequalities}\label{sec: useful inequalities}
\hypertarget{linear}{}
We outline some useful inequalities which will be frequently used in the proof.

\begin{lemma}[\scshape Rolling Sum Lemma]\label{lem: rolling sum}
If $l\ge 1$ and $A\in\RR^{n\times n}$ is a row-stochastic matrix satisfying Assumption~\ref{ass-weight-matrix}, the following estimation holds for $\forall T\ge 0$.
    \begin{align} \label{eq:rolling sum A}\sum_{k=0}^T\norm{\sum_{i=0}^k(A^{k+l-i}-A_\infty)\Delta^{(i)}}_F^2\le s_A^2\sum_{i=0}^T\norm{\Delta^{(i)}}_F^2,
    \end{align}
    where $\Delta^{(i)}\in \RR^{n\times d}$ are arbitrary matrices, and $s_A$ is defined by: 
    \begin{align*}
        s_A:= \max_{k\ge 1}\norm{A^k-A_\infty}_2\cdot\frac{1+\frac{1}{2}\ln(\kappa_A)}{1-\beta_A}.
    \end{align*} 
\end{lemma}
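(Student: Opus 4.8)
The plan is to reduce the matrix rolling sum to a one-dimensional geometric estimate and then bound that scalar sum. First I would bound the inner Frobenius norm by the triangle inequality together with the identity $A^{k+l-i}-A_\infty=(A-A_\infty)^{k+l-i}$, which holds whenever the exponent is $\ge 1$ because $AA_\infty=A_\infty A=A_\infty^2=A_\infty$ (a direct consequence of $A\one=\one$ and $\pi_A^\top A=\pi_A^\top$). Writing $\rho_j:=\norm{A^j-A_\infty}_2$ and using $\fnorm{MX}\le\norm{M}_2\fnorm{X}$, each summand satisfies $\fnorm{\sum_{i=0}^k(A^{k+l-i}-A_\infty)\Delta^{(i)}}\le\sum_{i=0}^k\rho_{k+l-i}\fnorm{\Delta^{(i)}}$, since $k+l-i\ge l\ge 1$ throughout.

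Next I would apply Cauchy--Schwarz to the weighted sum, splitting the weight as $\rho_{k+l-i}=\rho_{k+l-i}^{1/2}\cdot\rho_{k+l-i}^{1/2}$, so that with $S:=\sum_{j\ge l}\rho_j$ one gets $\fnorm{\sum_{i=0}^k(\cdots)\Delta^{(i)}}^2\le S\sum_{i=0}^k\rho_{k+l-i}\fnorm{\Delta^{(i)}}^2$. Summing over $k=0,\dots,T$ and interchanging the order of summation, the kernel sum $\sum_{k\ge i}\rho_{k+l-i}$ is again dominated by $S$, which yields the stated inequality with $s_A^2=S^2$. This is the standard discrete-convolution (Young's inequality) step and should be routine; it also shows it suffices to take $s_A=\sum_{j\ge 1}\rho_j$, the worst case $l=1$.

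The substance is therefore to prove $\sum_{j\ge 1}\rho_j\le\rho_{\max}\cdot\frac{1+\frac12\ln(\kappa_A)}{1-\beta_A}$, where $\rho_{\max}:=\max_{k\ge 1}\norm{A^k-A_\infty}_2$. Two complementary bounds on $\rho_j$ drive this: the uniform bound $\rho_j\le\rho_{\max}$, and the geometric bound $\rho_j\le\sqrt{\kappa_A}\,\beta_A^{\,j}$. The latter follows from $\norm{(A-A_\infty)^j}_{\pi_A}\le\beta_A^{\,j}$ together with the norm-equivalence $\norm{W}_2\le\sqrt{\kappa_A}\,\norm{W}_{\pi_A}$, which holds because $\norm{\Pi_A^{1/2}}_2=\sqrt{\max(\pi_A)}$ and $\norm{\Pi_A^{-1/2}}_2=1/\sqrt{\min(\pi_A)}$. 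I would then split the sum at a threshold $M\asymp\frac{\frac12\ln(\kappa_A)}{1-\beta_A}$, bounding the head $\sum_{j\le M}\rho_j\le M\rho_{\max}$ by the uniform bound and the tail $\sum_{j>M}\rho_j$ by the geometric series. The elementary inequality $-\ln(\beta_A)\ge 1-\beta_A$ converts the exact crossover exponent $\frac{\frac12\ln(\kappa_A)}{-\ln(\beta_A)}$ into the stated $\frac{\frac12\ln(\kappa_A)}{1-\beta_A}$, and at the threshold one has $\sqrt{\kappa_A}\,\beta_A^{M}\le 1$, so the geometric tail telescopes to $O\big(\tfrac{1}{1-\beta_A}\big)$.

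The main obstacle is bookkeeping: obtaining the factor $\rho_{\max}$ (rather than a bare $1$) in front of the tail and packaging head and tail into the single clean constant $\frac{1+\frac12\ln(\kappa_A)}{1-\beta_A}$. The cleanest route is to place the split exactly at the crossover $\sqrt{\kappa_A}\,\beta_A^{M}=\rho_{\max}$, which is legitimate since $\rho_{\max}\le\sqrt{\kappa_A}\,\beta_A$; the tail then telescopes to $\frac{\beta_A}{1-\beta_A}\rho_{\max}\le\frac{\rho_{\max}}{1-\beta_A}$. Controlling the head length $M$ requires both $-\ln(\beta_A)\ge 1-\beta_A$ and the reverse norm-equivalence $\beta_A\le\sqrt{\kappa_A}\,\rho_{\max}$, which bounds $-\ln(\rho_{\max})$ in terms of $\tfrac12\ln(\kappa_A)$ up to lower-order terms in the delicate regime $\rho_{\max}<1$. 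Tracking these logarithmic factors so that the coefficient of $\ln(\kappa_A)$ lands at $\tfrac12$ is the only nontrivial estimate; everything else reduces to the convolution bound and elementary calculus inequalities.
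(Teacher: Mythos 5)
Your proof follows essentially the same route as the paper's: the bound $\|A^j-A_\infty\|_2\le\sqrt{\kappa_A}\,\beta_A^j$ via the $\pi_A$-norm equivalence, a split of the kernel sum $\sum_{j\ge 1}\|A^j-A_\infty\|_2$ at the crossover between the uniform bound $\rho_{\max}$ and the geometric bound, and a weighted Cauchy--Schwarz step (which the paper phrases as Jensen's inequality with weights proportional to $\|A^{k+1-i}-A_\infty\|_2$) to convert that kernel-sum bound into the squared rolling-sum bound. The one delicate point you flag---that in the regime $\rho_{\max}<1$ the crossover exponent exceeds $\frac{\frac{1}{2}\ln(\kappa_A)}{1-\beta_A}$ and the head/tail bookkeeping only recovers $s_A$ up to an absolute constant---is equally present in the paper's own proof (its claim $p\le\frac{\frac{1}{2}\ln(\kappa_A)}{1-\beta_A}$ implicitly assumes $M_A\ge 1$), so it does not distinguish your argument from theirs.
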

\begin{proof}
First, we prove that 
\begin{align}\label{eq: rolling sum lemma-1}
    \norm{A^i-A_\infty}_2\le \sqrt{\kappa_A}\beta_A^{i}, \forall i \ge 0.
 \end{align} 
Notice that $\beta_A:=\norm{A-A_\infty}_{\pi_A}$ and \[\norm{A^i-A_\infty}_{\pi_A}= \norm{(A-A_\infty)^i}_{\pi_A}\le \norm{A-A_\infty}^i_{\pi_A}=\beta_A^i,\] 
we have
\[
\norm{(A^{i}-A_\infty)v}=\norm{\Pi_A^{-1/2}(A^i-A_\infty)v}_{\pi_A}\le \sqrt{\underline{\pi_A}}\beta_A^i\norm{v}_{\pi_A}\le \sqrt{\kappa_A}\beta_A^i\norm{v}.
\]
The last inequality comes from $\norm{v}_{\pi_A}\le \overline{\pi_A} \norm{v}$. Therefore, \eqref{eq: rolling sum lemma-1} holds.

Second, we want to prove that for all $k \ge 0$, we have
\begin{align}\label{eq: rolling sum lemma-2}
    \sum_{i=0}^k\norm{A^{k+l-i}-A_\infty}_2\le  s_A.
\end{align}
Towards this end, we define $M_A:=\max_{k\ge 1}\norm{A^k-A_\infty}_2$. According to  \eqref{eq: rolling sum lemma-1}, $M_A$ is well-defined. We also define $p=\max\left\{\frac{\ln(\sqrt{\kappa_A})-\ln(M_A)}{-\ln(\beta_A)}, 0\right\}$, %\rceil\le \lceil\frac{\ln(\sqrt{\kappa_A})-\ln(M_A)}{1-\beta_A}\rceil$, 
then we can verify that $\norm{A^i-A_\infty}_2\le \min\{M_A, M_A\beta_A^{i-p}\},\forall i\ge 1.$ With this inequality, we can bound $\sum_{i=0}^k\norm{A^{k+1-i}-A_\infty}_2$ as follows:
\begin{align}\label{eq: rolling sum lemma-3}
    &\sum_{i=0}^k\norm{A^{k+1-i}-A_\infty}_2=\sum_{i=1}^{\min\{\lfloor p \rfloor, k\}}\norm{A^i-A_\infty}_2+ \sum_{i=\min\{\lfloor p \rfloor, k\}+1}^{k+1}\norm{A^{i}-A_\infty}_2\nonumber\\
    \le &\sum_{i=0}^{\min\{\lfloor p \rfloor, k\}} M_A+\sum_{i=\min\{\lfloor p \rfloor, k\}+1}^{k+1} M_A\beta_A^{i-p}\nonumber\\
    \le & M_A\cdot (1+\min\{\lfloor p \rfloor, k\})+M_A\cdot \frac{1}{1-\beta_A}\beta_A^{\min\{\lfloor p \rfloor, k\}+1-p}.
    %=M_A(x+p+)\\
    %\le M_A(p+\frac{1}{1-\beta_A})
\end{align}
If $p=0$, \eqref{eq: rolling sum lemma-3} is simplified to $\sum_{i=0}^k\norm{A^{k+1-i}-A_\infty}_2\le M_A\cdot\frac{1}{1-\beta_A}$ and \eqref{eq: rolling sum lemma-2} is naturally satisfied.
If $p>0$, let $x=\min\{\lfloor p \rfloor, k\}+1-p\in [0,1)$,  \eqref{eq: rolling sum lemma-2} is simplified to 
\[
\sum_{i=0}^k\norm{A^{k-i}-A_\infty}_2\le M_A(x+p+\frac{\beta_A^x}{1-\beta_A})\le M_A(p+\frac{1}{1-\beta_A}).
\] 
Noting that $p\le \frac{\frac{1}{2}\ln(\kappa_A)}{1-\beta_A}$, we finish the proof of \eqref{eq: rolling sum lemma-2}.

Finally, to obtain \eqref{eq:rolling sum A}, we use Jensen's inequality. For positive numbers $a_i, i\in[k+1]$ satisfying $\sum_{i=1}^{k+1} a_i=1$, we have
\begin{align}\label{eq: rolling sum lemma-4}
     &\norm{\sum_{i=0}^k(A^{k+1-i}-A_\infty)\Delta^{(i)}}_F^2=\norm{\sum_{i=0}^k a_{k+1-i}\cdot a_{k+1-i}^{-1}(A^{k-i}-A_\infty)\Delta^{(i)}}_F^2\nonumber\\
    \le& \sum_{i=0}^k a_{k+1-i}\norm{a_{k+1-i}^{-1}(A^{k+1-i}-A_\infty)\Delta^{(i)}}_F^2\le \sum_{i=0}^k a_{k+1-i}^{-1}\norm{A^{k+1-i}-A_\infty}_2^2\norm{\Delta^{(i)}}_F^2.
\end{align}
By choosing $a_{k+1-i}=(\sum_{i=0}^k\norm{A^{k+1-i}-A_\infty}_2)^{-1}\norm{A^{k+1-i}-A_\infty}_2$ in \eqref{eq: rolling sum lemma-4}, we obtain that
\begin{align}\label{eq: rolling sum lemma-5}
    \norm{\sum_{i=0}^k(A^{k+1-i}-A_\infty)\Delta^{(i)}}_F^2\le \sum_{i=0}^k\norm{A^{k+1-i}-A_\infty}_2\cdot\sum_{i=0}^k\norm{A^{k+1-i}-A_\infty}_2\norm{\Delta^{(i)}}_F^2.
\end{align}
By summing up \eqref{eq: rolling sum lemma-5} from $k=0$ to $T$, we obtain that  
\begin{align*}
    &\sum_{k=0}^T\norm{\sum_{i=0}^k(A^{k+1-i}-A_\infty)\Delta^{(i)}}_F^2 \le s_A\sum_{k=0}^T\sum_{i=0}^k\norm{A^{k+1-i}-A_\infty}_2\norm{\Delta^{(i)}}_F^2\nonumber\\
    \le& s_A \sum_{i=0}^T(\sum_{k=i}^T\norm{A^{k+1-i}-A_\infty}_2)\norm{\Delta^{(i)}}_F^2\le s_A^2\sum_{i=0}^T\norm{\Delta^{(i)}}_F^2,
\end{align*}
which finishes the proof of this lemma. 
\end{proof}

\begin{lemma}[\scshape Convergence of diagonal matrix]\label{lem: diag converge}
    The following inequalities hold for all $k\geq 1$.
    \begin{enumerate}
        \item \(\norm{\pi_A^\top D_k^{-1}-\one^\top}\le \theta_A\sqrt{n\kappa_A}\beta_A^k\).
        \item $\|D_{k}^{-1}-\Pi_A^{-1}\|_2\le \theta_A\sqrt{\kappa_A^3 n^3}\beta_A^k$.
        \item $\|D_{k}^{-1}-D_{k+1}^{-1}\|_2\le 2\theta_A\sqrt{\kappa_A^3 n^3}\beta_A^k$.
    \end{enumerate}
\end{lemma}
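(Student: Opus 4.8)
The plan is to exploit that every matrix appearing here—$D_k$, $D_{k+1}$, and $\Pi_A$—is diagonal, so each operator $2$-norm collapses to the largest absolute diagonal entry, and the \emph{only} genuine analytic input is the geometric decay $\norm{A^i-A_\infty}_2\le\sqrt{\kappa_A}\beta_A^i$ already established in \eqref{eq: rolling sum lemma-1}. Since $[D_k]_{jj}=[A^k]_{jj}$ while $[\Pi_A]_{jj}=[A_\infty]_{jj}=[\pi_A]_j$, the diagonal entries of $D_k$ approach those of $\Pi_A$ at the rate $\beta_A^k$; combined with the uniform bound $[A^k]_{jj}^{-1}\le\theta_A$ from Assumption~\ref{ass:diag}, all three parts follow. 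I do not expect any deep difficulty; the proof is an organized bookkeeping of these elementary facts.

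For Part~1 I would compute the $j$-th entry of the row vector $\pi_A^\top D_k^{-1}-\one^\top$ explicitly as $([\pi_A]_j-[A^k]_{jj})/[A^k]_{jj}=-[A^k-A_\infty]_{jj}/[A^k]_{jj}$, using $[\pi_A]_j=[A_\infty]_{jj}$. Its magnitude is at most $[A^k]_{jj}^{-1}\cdot\bigl|[A^k-A_\infty]_{jj}\bigr|\le\theta_A\sqrt{\kappa_A}\beta_A^k$, since a diagonal entry is bounded by the operator norm. Summing the squares of $n$ such entries passes from the entrywise bound to the Euclidean norm and contributes the extra factor $\sqrt{n}$, giving $\norm{\pi_A^\top D_k^{-1}-\one^\top}\le\theta_A\sqrt{n\kappa_A}\beta_A^k$.

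For Part~2 I would use the resolvent-type identity $D_k^{-1}-\Pi_A^{-1}=D_k^{-1}(\Pi_A-D_k)\Pi_A^{-1}$ together with submultiplicativity, producing three factors: $\norm{D_k^{-1}}_2\le\theta_A$; $\norm{\Pi_A-D_k}_2=\norm{\mathrm{Diag}(A_\infty-A^k)}_2\le\sqrt{\kappa_A}\beta_A^k$; and $\norm{\Pi_A^{-1}}_2=1/\min_j[\pi_A]_j$. The one ingredient needing a short argument is the last factor, which is the only place I would flag as the mild obstacle: from $\sum_j[\pi_A]_j=1$ one gets $\max_j[\pi_A]_j\ge 1/n$, and combining this with $\kappa_A=\max_j[\pi_A]_j/\min_j[\pi_A]_j$ gives $\min_j[\pi_A]_j\ge 1/(n\kappa_A)$, hence $\norm{\Pi_A^{-1}}_2\le n\kappa_A$. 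Multiplying the three bounds yields $\theta_A n\kappa_A^{3/2}\beta_A^k$, which is dominated by the stated (looser) bound $\theta_A\sqrt{n^3\kappa_A^3}\beta_A^k$ since $n\le n^{3/2}$.

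Part~3 is then immediate from Part~2 and the triangle inequality, $\norm{D_k^{-1}-D_{k+1}^{-1}}_2\le\norm{D_k^{-1}-\Pi_A^{-1}}_2+\norm{\Pi_A^{-1}-D_{k+1}^{-1}}_2$, where the second term is at most $\theta_A\sqrt{n^3\kappa_A^3}\beta_A^{k+1}\le\theta_A\sqrt{n^3\kappa_A^3}\beta_A^k$ because $\beta_A<1$; adding the two copies gives the factor of $2$. The only points demanding care throughout are the lower bound on $\min_j[\pi_A]_j$ (equivalently the upper bound on $\norm{\Pi_A^{-1}}_2$) and keeping the entrywise-versus-operator-norm bookkeeping consistent, which is harmless precisely because diagonal structure makes the two coincide.
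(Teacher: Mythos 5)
Your proof is correct and follows essentially the same route as the paper's: both rest on the geometric decay $\norm{A^k-A_\infty}_2\le\sqrt{\kappa_A}\beta_A^k$, the bound $\norm{D_k^{-1}}_2\le\theta_A$ from Assumption~\ref{ass:diag}, and $\norm{\Pi_A^{-1}}_2\le n\kappa_A$, with Part~3 obtained by the same triangle inequality. The only cosmetic difference is in Part~2, where your factorization $D_k^{-1}(\Pi_A-D_k)\Pi_A^{-1}$ (versus the paper's $\Pi_A^{-1}(\Pi_A D_k^{-1}-I_n)$ combined with Part~1) in fact yields the slightly sharper constant $\theta_A n\kappa_A^{3/2}\beta_A^k$, which, as you note, is dominated by the stated bound.
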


\begin{proof}
Denote $\Pi_A =\text{diag}(A_\infty)$ and $\underline{\pi}_A=\min_i [\pi_A]_i$. 
    
    The first conclusion comes from
    \begin{align*}
        \norm{\pi_A^\top D_k^{-1}-\one^\top}&\le \norm{D_k^{-1}}\norm{\pi_A -\mathrm{diag}(A^k)}\le \theta_A\norm{\pi_A -\mathrm{diag}(A^k)}\\
        &\le\theta_A\norm{A^k-A_\infty}_F\le \sqrt{n}\theta_A \norm{A^k-A_\infty}_2\le \theta_A\sqrt{n\kappa_A}\beta_A^k.
    \end{align*}

    The second conclusion can be derived from the first conclusion:
    \begin{align*}
        \norm{D_k^{-1}-\Pi_A^{-1}}_2\le \norm{\Pi_A^{-1}}_2\norm{\Pi_A D_k^{-1}-I_n}_2 = \underline{\pi}_A^{-1} \max_i[\pi_A^\top D_k^{-1}-\one^\top]_i\le n \kappa_A \norm{\pi_A^\top D_k^{-1}-\one^\top}\le \theta_A\sqrt{n^3\kappa_A^3}\beta_A^k.
    \end{align*}
    
    Finally, note that $\norm{D_k^{-1}-D_{k+1}^{-1}}_2\le \norm{D_k^{-1}-\Pi_A^{-1}}_2+\norm{D_{k+1}^{-1}-\Pi_A^{-1}}_2$, we obtain the third conclusion.
\end{proof}

\subsection{Proof of Lemma~\ref{lem(main):1}}
\hypertarget{proof1}{}
Left-multiply $\pi_A^\top$ on both sides of \eqref{eq: AGT-1}, we have $\bm{w}^{(k+1)}=\bm{w}^{(k)}-\alpha\pi_A^\top \vy^{(k)}$. Using Assumption~\ref{ass:smooth}, we know that
\begin{align}\label{eq:proof of lemma 1:1}
    \quad f(\bm{w}^{(k+1)})\le f(\bm{w}^{(k)})-n^{-1}\alpha\ip{n\nabla f(\bm{w}^{(k)}), \pi_A^\top \vy^{(k)}} +\frac{\alpha^2L}{2}\norm{\pi_A^\top \vy^{(k)}}^2.
\end{align}

By taking expectations on $\cF_{(k)}$ both sides of~\eqref{eq:proof of lemma 1:1}, we have
\begin{align*}
    \EE[f(\bm{w}^{(k+1)})|\cF_k]&\le f(\bm{w}^{(k)})-n^{-1}\alpha \EE\left[\ip{n\nabla f(\bm{w}^{(k)}), \pi_A^\top \vy^{(k)}} |\cF_k\right]+\frac{\alpha^2L}{2}\EE[\norm{\pi_A^\top \vy^{(k)}}^2|\cF_{k}]\nonumber\\
    &=f(\bm{w}^{(k)})-n^{-1}\alpha\ip{n\nabla f(\bm{w}^{(k)}), \pi_A^\top D_k^{-1}\nabla f(\vx^{(k)})}+\frac{\alpha^2L}{2}\norm{\pi_A^\top D_k^{-1}\nabla f(\vx^{(k)})}^2\nonumber\\
    &\quad+\frac{\alpha^2L}{2}\mathrm{Var}[\pi_A^\top D_k^{-1}\nabla f(\vx^{(k)})] \nonumber\\
    &\overset{(a)}{\le} f(\bm{w}^{(k)})-n^{-1}\alpha\ip{n\nabla f(\bm{w}^{(k)}), \pi_A^\top D_k^{-1}\nabla f(\vx^{(k)})} \nonumber\\
        &\quad+\frac{\alpha^2L}{2}\norm{\pi_A^\top D_k^{-1}\nabla f(\vx^{(k)})}^2+\frac{\alpha^2L\sigma^2}{2}\sum_{j=1}^n(\frac{[\pi_A]_j}{[D_k]_{j}})^2\nonumber\\
        &=f(\bm{w}^{(k)}) -(\frac{\alpha-n\alpha^2L}{2n})\EE[\norm{\pi_A^\top D_k^{-1}\nabla f(\vx^{(k)})}^2]-\frac{n\alpha}{2} \norm{\nabla f(\bm{w}^{(k)})}^2\nonumber\\
        &\quad+\frac{\alpha}{2n}\norm{n\nabla f(\bm{w}^{(k)})-\pi_A^\top D_k^{-1}\nabla f(\vx^{(k)})}^2+\frac{\alpha^2L\sigma^2}{2}d_k\nonumber\\
        &\overset{\alpha\le \frac{1}{2nL}}{\le } f(\bm{w}^{(k)})-\frac{\alpha}{4n}\norm{\pi_A^\top D_k^{-1}\nabla f(\vx^{(k)})}^2-\frac{n\alpha}{2} \norm{\nabla f(\bm{w}^{(k)})}^2\nonumber\\
        &+n\alpha \norm{\nabla f(\bm{w}^{(k)})-n^{-1}\one^\top \nabla f(\vx^{(k)})}^2+\frac{\alpha}{n} \norm{(\one^\top -\pi_A^\top D_k^{-1})\nabla f(\vx^{(k)})}^2+\frac{\alpha^2L\sigma^2}{2}d_k\nonumber
\end{align*}

where (a) holds because the gradient noise is linearly independent, $\mathrm{Var}[\pi_A^\top D_k^{-1}\nabla f(\vx^{(k)})]=\mathrm{Var}[\sum_{j=1}^n\frac{[\pi_A]_j}{[D_k]_{jj}}\nabla F_j(\bm{x}_j;\xi_j)]=\sum_{j=1}^n (\frac{[\pi_A]_j}{[D_k]_{jj}})^2\mathrm{Var}[\nabla F_j(\bm{x}_j;\xi_j)]\le \sigma^2\sum_{j=1}^n(\frac{[\pi_A]_j}{[D_k]_{jj}})^2=\sigma^2d_k$. $d_k$ is defined as $d_k=\sum_{j=1}^n(\frac{[\pi_A]_j}{[D_k]_{jj}})^2$.

Note that $\norm{\nabla f(\bm{w}^{(k)})-n^{-1}\one^\top \nabla f(\vx^{(k)})}^2{=\frac{1}{n^2}\|\one^\top(\nabla f(\vw^{(k)})-\nabla f(\vx^{(k)}))\|^2\le\frac{1}{n}\sum_{i=1}^n\|\nabla f_i(\bm{x}_i^{(k)})-\nabla f_i(\bm{w}^{(k)})\|^2}\le n^{-1}L^2\sum_{i=1}^n \norm{\bm{x}_i^{(k)}-\bm{w}^{(k)}}^2=n^{-1}L^2\norm{\Delta_x^{(k)}}_F^2 $, we thus obtain Lemma~\ref{lem(main):1}. \hfill $\square$

\subsection{Estimate Descent Deviation}
\hypertarget{estimate}{}
Using the first statement of Lemma~\ref{lem: diag converge}, we know that
\[\norm{\pi_A^\top \mathrm{Diag}(A^k)^{-1}-\one^\top}\le \theta_A\sqrt{n\kappa_A}\beta_A^k, \quad\forall k\ge 1. \]
Next, with Assumption~\ref{ass:smooth} we know that $\forall \bm{x}, \bm{y}\in \RR^d, i\in [n]$,

\[f_i(\bm{y})\le f_i(\bm{x})+\ip{\nabla f_i(\bm{x}),\bm{y}-\bm{x}}+\frac{L}{2}\norm{\bm{y}-\bm{x}}^2.\]
By taking $\bm{y}=\bm{x}-\frac{1}{L}\nabla f_i(\bm{x})$, we obtain that $\frac{1}{2L}\norm{\nabla f(\bm{x})}^2 \le f_i(\bm{x})-f_i(\bm{y})\le f_i(\bm{x})-f_i^*$. Furthermore, using $L$-smoothness property and Cauchy-Schwarz inequality, we have

\begin{align*}
    \norm{\nabla f(\vx^{(k)})}_F^2 &\le 2\norm{\nabla f(\vx^{(k)})-\nabla f(\vw^{(k)})}_F^2+2\norm{\nabla f(\vw^{(k)})}_F^2\\
    &\le 2L^2\norm{\Delta_x^{(k)}}_F^2+2\sum_{i=1}^n \norm{\nabla f_i(\bm{w}^{(k)})}^2\\
    &\le 2L^2\norm{\Delta_x^{(k)}}_F^2+4L\sum_{i=1}^n(f_i(\bm{w}^{(k)})-f_i^*)\\
    &=2L^2\norm{\Delta_x^{(k)}}_F^2+4nL(f(\bm{w}^{(k)})-f^*),
\end{align*}
By combining the two parts, we complete the proof of Lemma~\ref{lem(main):descent deviation}.

\subsection{Absorb extra $f(w)-f^*$}
\hypertarget{absorb}{}
 \begin{lemma}\label{lem(app):absorb extra term}
     For any $\Delta_k, S_k, F_k, c \in \RR^+, \beta\in[0,1)$, if \[S_k\le (1+c\alpha \beta^k)\Delta_k-\Delta_{k+1}+F_k,\quad \forall k \ge 1,\] then, by selecting $\alpha \le \frac{1-\beta}{c\beta}$, we obtain \[\sum_{k=1}^K S_k \le 3\Delta_1+3\sum_{k=1}^K F_k. \]
 \end{lemma}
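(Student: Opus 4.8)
The plan is to absorb the multiplicative perturbation $1+c\alpha\beta^k$ into a rescaling of the sequence $\{\Delta_k\}$ so that the recursion collapses into a clean telescoping sum. Concretely, I would introduce the partial products $P_k := \prod_{j=1}^{k}(1+c\alpha\beta^j)$ with the convention $P_0=1$, and divide the hypothesis $S_k\le (1+c\alpha\beta^k)\Delta_k-\Delta_{k+1}+F_k$ through by $P_k$. Since $P_k=(1+c\alpha\beta^k)P_{k-1}$, the leading term on the right-hand side becomes $\Delta_k/P_{k-1}$, so setting $\hat\Delta_k := \Delta_k/P_{k-1}$ the inequality reads $S_k/P_k \le \hat\Delta_k-\hat\Delta_{k+1}+F_k/P_k$.

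Summing this rescaled inequality over $k=1,\dots,K$ telescopes the $\hat\Delta$ terms, giving $\sum_{k=1}^K S_k/P_k \le \hat\Delta_1 - \hat\Delta_{K+1} + \sum_{k=1}^K F_k/P_k$. I would then discard $-\hat\Delta_{K+1}\le 0$ (using $\Delta_{K+1}\ge 0$), observe $\hat\Delta_1 = \Delta_1$, and bound $F_k/P_k\le F_k$ using that $P_k\ge 1$ because every factor exceeds $1$. This yields the intermediate estimate $\sum_{k=1}^K S_k/P_k \le \Delta_1 + \sum_{k=1}^K F_k$.

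The remaining step converts $\sum_k S_k/P_k$ back into $\sum_k S_k$, for which I need a uniform upper bound on $P_k$, and this is exactly where the learning-rate hypothesis enters. Since $\alpha\le (1-\beta)/(c\beta)$ gives $\sum_{j=1}^\infty c\alpha\beta^j = c\alpha\beta/(1-\beta)\le 1$, the elementary inequality $1+x\le e^x$ yields $P_k\le \exp\!\big(\sum_{j\ge 1}c\alpha\beta^j\big)\le e$ for every $k$. Because $S_k\ge 0$ and $P_k\le e$, we have $S_k/P_k \ge S_k/e$, so $\tfrac1e\sum_{k=1}^K S_k \le \Delta_1+\sum_{k=1}^K F_k$, and the bound $e<3$ finally delivers $\sum_{k=1}^K S_k\le 3\Delta_1+3\sum_{k=1}^K F_k$.

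The one delicate point — the main obstacle — is spotting the rescaling device and verifying that the step-size condition is precisely what forces the product $P_k$ to stay below $e$; everything afterward is routine bookkeeping with nonnegativity. It is worth stressing that the geometric decay of the perturbation (the $\beta^k$ factor, which in the application is the $\cO(\beta_A^{2k})$ coefficient from Lemma~\ref{lem(main):descent deviation}) is what makes $\sum_j c\alpha\beta^j$ finite and controllable by $\alpha$; a non-decaying perturbation could not be absorbed in this manner, which is exactly why the extra $f(\bm w^{(k)})-f^*$ term does not spoil the descent recursion here.
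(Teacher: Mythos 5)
Your proposal is correct and follows essentially the same route as the paper: the paper's proof introduces exactly the same partial products $U_k=\prod_{i=1}^{k}(1+\alpha c\beta^i)$, telescopes the rescaled recursion, and uses $\alpha\le\frac{1-\beta}{c\beta}$ together with $1+x\le e^x$ to bound the products by $e<3$. The only cosmetic difference is that the paper multiplies the summed inequality through by $U_K$ at the end rather than bounding $F_k/P_k\le F_k$ and $S_k/P_k\ge S_k/e$ term by term, which yields the identical constant.
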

Suppose that $S_k\le (1+\alpha c\beta^k)\Delta_k-\Delta_{k+1}+F_k, \forall k \ge 1$, Define $U_k=\prod_{i=1}^{k}(1+\alpha c\beta^i), \forall k \ge 1, U_{0}=1$. Then we have
\begin{align}\label{eq:proof of lemma 4:1}
    U_k^{-1}S_k\le U_{k-1}^{-1}\Delta_k-U_k^{-1}\Delta_{k+1}+U_k^{-1}F_k,\forall k \ge 1.
\end{align}

By summing up \eqref{eq:proof of lemma 4:1} from $k=1$ to $K$, we have

\[\sum_{k=1}^K U_k^{-1}S_k\le \Delta_1-U_K^{-1}\Delta_{K+1}+\sum_{k=1}^K U_k^{-1}F_k.\]

When $\alpha<\frac{1-\beta}{c\beta}\triangleq\alpha_2$, we have 
$U_k=\exp(\sum_{i=1}^k\ln(1+\alpha c \beta^i))\le \exp(\sum_{i=1}^k \alpha c\beta^i)\le \exp(\frac{\alpha c\beta}{1-\beta})\le e <3$.  Therefore,
\[\sum_{k=1}^K S_k\le U_K(\sum_{k=1}^K U_k^{-1}S_k)\le U_K(\Delta_1-U_K^{-1}\Delta_{k+1}+\sum_{k=1}^K U_k^{-1}F_k)\le 3\Delta_1+3\sum_{k=1}^K F_k.\] 
\hfill$\square$

\subsection{Proof of Lemma~\ref{lem(main):5}}
\hypertarget{proof2}{}
The following lemma is the formal version of Lemma~\ref{lem(main):5}.
\begin{lemma} For any $k\ge 0$, we have
    \begin{align*}
        \Delta_x^{(k+1)}&=-\alpha \sum_{i=0}^{k}(A-A_\infty)^{k+1-i}\Delta_y^{(i)}.\\
        \Delta_y^{(k+1)}&=\sum_{i=0}^k(A-A_\infty)^{k+1-i}D_{i+1}^{-1}\Delta_g^{(i)}\\
        &\,+\sum_{i=0}^k\sum_{l=i}^k(A-A_\infty)^{k+1-l}(D_{l+1}^{-1}-D_l^{-1}) \Delta_g^{(i-1)}.
\end{align*}
\end{lemma}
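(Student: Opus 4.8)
The plan is to reduce both identities to linear recursions in the deviation variables and then unroll them. The engine of the whole argument is a handful of algebraic facts about $A_\infty=\one\pi_A^\top$: since $\pi_A^\top A=\pi_A^\top$, $A\one=\one$ and $\pi_A^\top\one=1$, we get $AA_\infty=A_\infty A=A_\infty^2=A_\infty$. Two consequences follow: $(A-A_\infty)^m=A^m-A_\infty$ for $m\ge 1$, and $(I-A_\infty)A=A-A_\infty=(A-A_\infty)(I-A_\infty)$. The last equality is what lets me, whenever a factor $A-A_\infty$ hits $\vx^{(k)}$ or $\vy^{(k)}$, freely replace them by their deviations $\Delta_x^{(k)}=(I-A_\infty)\vx^{(k)}$ and $\Delta_y^{(k)}=(I-A_\infty)\vy^{(k)}$. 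Applying $(I-A_\infty)$ to the updates \eqref{eq: AGT-1}--\eqref{eq: AGT-2} then gives the one-step deviation recursions
\[
\Delta_x^{(k+1)}=(A-A_\infty)\Delta_x^{(k)}-\alpha(A-A_\infty)\Delta_y^{(k)},
\]
\[
\Delta_y^{(k+1)}=(A-A_\infty)\Delta_y^{(k)}+(A-A_\infty)\big(D_{k+1}^{-1}\vg^{(k+1)}-D_k^{-1}\vg^{(k)}\big).
\]

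For the first identity I would induct on $k$. The consensus initialization $\bm x_i^{(0)}=\bm x^{(0)}$ gives $\vx^{(0)}=\one(\bm x^{(0)})^\top$, hence $\Delta_x^{(0)}=(I-A_\infty)\vx^{(0)}=0$. Iterating the $\Delta_x$ recursion from this zero base yields $\Delta_x^{(k+1)}=-\alpha\sum_{i=0}^{k}(A-A_\infty)^{k+1-i}\Delta_y^{(i)}$: the inductive step multiplies the level-$k$ formula by $(A-A_\infty)$ and appends the new $-\alpha(A-A_\infty)\Delta_y^{(k)}$ term, which is exactly the $i=k$ summand.

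The second identity is the substantive one. Writing $\vu^{(k)}:=D_k^{-1}\vg^{(k)}$ and unrolling the $\Delta_y$ recursion gives $\Delta_y^{(k+1)}=(A-A_\infty)^{k+1}\Delta_y^{(0)}+\sum_{j=0}^{k}(A-A_\infty)^{k+1-j}(\vu^{(j+1)}-\vu^{(j)})$. I would split each increment as $\vu^{(j+1)}-\vu^{(j)}=D_{j+1}^{-1}\Delta_g^{(j)}+(D_{j+1}^{-1}-D_j^{-1})\vg^{(j)}$. Summing the first piece over $j$ produces precisely $\sum_{i=0}^{k}(A-A_\infty)^{k+1-i}D_{i+1}^{-1}\Delta_g^{(i)}$. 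For the second piece I expand $\vg^{(j)}$ as a rolling sum of increments, $\vg^{(j)}=\sum_{i=0}^{j}\Delta_g^{(i-1)}$ (recalling $\Delta_g^{(-1)}=\vg^{(0)}$ and $D_0=I_n$), substitute, and swap the order of the double sum, turning $\sum_{j=0}^{k}\sum_{i=0}^{j}$ into $\sum_{i=0}^{k}\sum_{l=i}^{k}$ with $l$ the renamed outer index. This delivers the claimed double-sum term $\sum_{i=0}^k\sum_{l=i}^{k}(A-A_\infty)^{k+1-l}(D_{l+1}^{-1}-D_l^{-1})\Delta_g^{(i-1)}$.

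The main obstacle, and the step I would treat most carefully, is the boundary bookkeeping around the initial iterate: the leftover factor $(A-A_\infty)^{k+1}\Delta_y^{(0)}$ together with the $i=-1$ contributions buried inside each $\vg^{(j)}$ must be reconciled with the stated summation limits. Here the special structure at $k=0$ is decisive — $\vy^{(0)}=\vg^{(0)}$ and $D_0=I_n$ make the $\vg^{(0)}$ terms in the first $\vy$-update cancel — and this is what pins down the coefficient of $\Delta_g^{(-1)}$. Since $\Delta_y^{(0)}=(I-A_\infty)\vg^{(0)}$ and $(A-A_\infty)^{k+1}(I-A_\infty)=(A-A_\infty)^{k+1}$, the residual reduces to $(A-A_\infty)^{k+1}\vg^{(0)}$, so I would verify the base case directly, $\Delta_y^{(1)}=(A-A_\infty)D_1^{-1}\vg^{(1)}$, to fix all index conventions before running the telescoping to general $k$. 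The only remaining ingredient, needed afterward to convert these exact expansions into the $\cO(k\beta_A^k)$ form of the informal statement, is the geometric estimate $\norm{A^i-A_\infty}_2\le\sqrt{\kappa_A}\,\beta_A^i$ established in Lemma~\ref{lem: rolling sum}.
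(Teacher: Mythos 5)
Your route is, in substance, the same as the paper's: the paper proves both identities by induction on the one-step deviation recursions, splits the tracking increment as $D_{k+1}^{-1}\vg^{(k+1)}-D_k^{-1}\vg^{(k)}=D_{k+1}^{-1}\Delta_g^{(k)}+(D_{k+1}^{-1}-D_k^{-1})\vg^{(k)}$, and absorbs $\vg^{(k)}=\sum_{i=0}^{k}\Delta_g^{(i-1)}$ into the double sum --- exactly the algebra you carry out by unrolling instead of inducting. The $\Delta_x$ identity and all your supporting facts ($(A-A_\infty)^m=A^m-A_\infty$, $(A-A_\infty)A_\infty=0$, $\Delta_x^{(0)}=0$ under consensus initialization) are correct.

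The one step you defer --- ``reconciling'' the residual $(A-A_\infty)^{k+1}\Delta_y^{(0)}=(A-A_\infty)^{k+1}\vg^{(0)}$ with the stated summation limits --- is precisely the step that cannot be completed: that term does not cancel. Carrying your unrolling to the end yields
\[
\Delta_y^{(k+1)}=(A-A_\infty)^{k+1}\Delta_g^{(-1)}+\sum_{i=0}^{k}(A-A_\infty)^{k+1-i}D_{i+1}^{-1}\Delta_g^{(i)}+\sum_{i=0}^{k}\sum_{l=i}^{k}(A-A_\infty)^{k+1-l}\bigl(D_{l+1}^{-1}-D_l^{-1}\bigr)\Delta_g^{(i-1)},
\]
and your own base-case check already exposes the discrepancy: at $k=0$ the right-hand side of the lemma as stated evaluates to $(A-A_\infty)D_1^{-1}\vg^{(1)}-(A-A_\infty)\vg^{(0)}$, whereas $\Delta_y^{(1)}=(A-A_\infty)D_1^{-1}\vg^{(1)}$; the gap is exactly the missing $(A-A_\infty)^{1}\Delta_g^{(-1)}$. (The appendix proof declares the $k=0$ case ``easy to verify'' and then runs a self-consistent induction, so the same boundary term is silently dropped there as well; the informal main-text version, whose sum starts at $i=-1$ and carries $\cO(k\beta_A^k)$ slack, is the one that actually accounts for it.) So you should either state the identity with the extra $(A-A_\infty)^{k+1}\Delta_g^{(-1)}$ term, or note explicitly that it is benign downstream because $\norm{(A-A_\infty)^{k+1}}_2\le\sqrt{\kappa_A}\,\beta_A^{k+1}$ lets it be folded into the same rolling-sum estimates as the other contributions. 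As written, ``verify the base case to fix the index conventions'' would not rescue the stated limits --- it would reveal that they need amending.
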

We prove this lemma by induction. Easy to verify that the transformation holds for $k=0$. Suppose the transformation holds for $k-1$, then we have
    \begin{align*}
        (I-A_\infty)\vy^{(k+1)}&=(A-A_\infty)(I-A_\infty)\vy^{(k)}+(A-A_\infty)(D_{k+1}^{-1}\vg^{(k+1)}-D_{k}^{-1}\vg^{(k)})\\
        =&\sum_{i=0}^{k-1}(A-A_\infty)^{k+1-i}D_{i+1}^{-1}\Delta_g^{(i)} +\sum_{i=0}^{k-1}\sum_{l=i}^{k-1}(A-A_\infty)^{k+1-l}(D_{l+1}^{-1}-D_l^{-1}) \Delta_g^{(i-1)}\\+&(A-A_\infty)D_{k+1}^{-1}\Delta_g^{(k)}+(A-A_\infty)(D_{k+1}^{-1}-D_k^{-1})\vg^{(k)}\\
        =&\sum_{i=0}^{k}(A-A_\infty)^{k+1-i}D_{i+1}^{-1}\Delta_g^{(i)} +\sum_{i=0}^{k}\sum_{l=i}^{k}(A-A_\infty)^{k+1-l}(D_{l+1}^{-1}-D_l^{-1}) \Delta_g^{(i-1)}
    \end{align*}
which finishes the proof. \hfill $\square$

\subsection{Proof of Lemma~\ref{lem(main):6}}
\hypertarget{proof3}{}
We start with the following Lemma. 
\begin{lemma}[Consensus Lemma for y ]\label{lem:y-consensus}
    \begin{align}
        &\sum_{k=0}^K \EE[\norm{(I-A_\infty)\vy^{(k+1)}}^2]_F\le C_{y,\sigma}(K+1)\sigma^2+\alpha^2C_{y,y}\sigma^2\sum_{k=0}^Kd_k+C_{y,0}\norm{\nabla f(\vx^{(0)})}_F^2\nonumber\\
        &\quad+C_{y,x}L^2\sum_{k=0}^K\EE\norm{\Delta_x^{(k+1)}}_F^2+ \alpha^2 L^2C_{y,y}\sum_{k=0}^K \EE\norm{\pi_A^\top D_k^{-1}\nabla f(\vx^{(k)})}_F^2
    \end{align}
    where $C_{y,\sigma}=6ns_AM_A\theta_A^2+\frac{8n^2\kappa_A^2\beta_A^2}{1-\beta_A^2}$, $C_{y,0}=\frac{8n\kappa_A^2\beta_A^2}{(1-\beta_A^2)^3}$, $C_{y,x}=18s_A^2\theta_A^2+\frac{144n\kappa_A^2\beta_A^2}{(1-\beta_A^2)^4}$, $C_{y,y}=9ns_A^2\theta_A^2+\frac{72n^2\kappa_A^2\beta_A^2}{(1-\beta_A^2)^4}$.
\end{lemma}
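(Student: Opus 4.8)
The plan is to bound the $y$-consensus error by feeding the explicit representation of $\Delta_y^{(k+1)}$ from Lemma~\ref{lem(main):5} into the Rolling Sum Lemma (Lemma~\ref{lem: rolling sum}) together with the geometric-decay estimates of Lemma~\ref{lem: diag converge}. Recall that Lemma~\ref{lem(main):5} writes $\Delta_y^{(k+1)}$ as the sum of a \emph{gradient-increment} piece $\mathrm{I}_k:=\sum_{i=0}^k(A-A_\infty)^{k+1-i}D_{i+1}^{-1}\Delta_g^{(i)}$ and a \emph{diagonal-drift} piece $\mathrm{II}_k:=\sum_{i=0}^k\sum_{l=i}^k(A-A_\infty)^{k+1-l}(D_{l+1}^{-1}-D_l^{-1})\Delta_g^{(i-1)}$. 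First I would split $\norm{\Delta_y^{(k+1)}}_F^2\le 2\norm{\mathrm{I}_k}_F^2+2\norm{\mathrm{II}_k}_F^2$ and handle the two sums over $k$ separately.

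For the first piece, since $\|D_{i+1}^{-1}\|_2\le\theta_A$ under Assumption~\ref{ass:diag}, setting $\Delta^{(i)}:=D_{i+1}^{-1}\Delta_g^{(i)}$ puts $\mathrm{I}_k$ exactly in the form of the Rolling Sum Lemma, yielding $\sum_{k=0}^K\norm{\mathrm{I}_k}_F^2\le s_A^2\theta_A^2\sum_{i=0}^K\norm{\Delta_g^{(i)}}_F^2$. For the second piece, the key observation is that the inner sum telescopes: swapping the order of summation and using $\Delta_g^{(-1)}=\vg^{(0)}$, $\Delta_g^{(j)}=\vg^{(j+1)}-\vg^{(j)}$, one gets $\sum_{i=0}^l\Delta_g^{(i-1)}=\vg^{(l)}$, so $\mathrm{II}_k=\sum_{l=0}^k(A^{k+1-l}-A_\infty)(D_{l+1}^{-1}-D_l^{-1})\vg^{(l)}$. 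Both factors now decay geometrically, $\|A^{k+1-l}-A_\infty\|_2\le\sqrt{\kappa_A}\beta_A^{k+1-l}$ and $\|D_{l+1}^{-1}-D_l^{-1}\|_2\lesssim\beta_A^{l}$ by Lemma~\ref{lem: diag converge}, so $\norm{\mathrm{II}_k}_F$ is a geometric convolution of the sequence $\beta_A^{l}\norm{\vg^{(l)}}_F$; a Young/Cauchy--Schwarz estimate bounds $\sum_k\norm{\mathrm{II}_k}_F^2$ by the Term-II constants times $\sum_l\norm{\vg^{(l)}}_F^2$.

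At this point everything reduces to $\sum_i\EE\norm{\Delta_g^{(i)}}_F^2$ and $\sum_l\EE\norm{\vg^{(l)}}_F^2$, which I would expand into the advertised building blocks. Taking conditional expectations and using Assumption~\ref{ass:bounded noise} (unbiasedness plus cross-node and cross-iteration independence of the noise), each stochastic gradient contributes variance at most $n\sigma^2$, producing the $(K+1)\sigma^2$ term; the increment $\Delta_g^{(i)}=\vg^{(i+1)}-\vg^{(i)}$ splits into two conditionally independent noise parts and a deterministic part $\nabla f(\vx^{(i+1)})-\nabla f(\vx^{(i)})$, the latter bounded via $L$-smoothness by $L^2\norm{\vx^{(i+1)}-\vx^{(i)}}_F^2$. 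Writing $\vx^{(i+1)}-\vx^{(i)}=(A-I)\Delta_x^{(i)}-\alpha A\vy^{(i)}$ (using $(A-I)A_\infty=0$) and $\vy^{(i)}=\Delta_y^{(i)}+\one\pi_A^\top\vy^{(i)}$ with $\pi_A^\top\vy^{(i)}=\pi_A^\top D_i^{-1}\vg^{(i)}$, then taking expectations and splitting $\norm{\pi_A^\top D_i^{-1}\vg^{(i)}}^2$ into mean plus variance, generates exactly the consensus term $L^2\sum_k\norm{\Delta_x^{(k+1)}}_F^2$, the gradient term $\alpha^2L^2\sum_k\norm{\pi_A^\top D_k^{-1}\nabla f(\vx^{(k)})}_F^2$, and the noise term $\alpha^2\sigma^2\sum_k d_k$. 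The initial iterate contributes $\norm{\nabla f(\vx^{(0)})}_F^2$, entering through $\EE\norm{\vg^{(0)}}_F^2$ and through $\Delta_y^{(0)}=(I-A_\infty)\vg^{(0)}$.

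I expect the main obstacle to be the \emph{self-referential} appearance of $\Delta_y$: because $\norm{\vx^{(i+1)}-\vx^{(i)}}_F^2$ carries an $\alpha^2\norm{\vy^{(i)}}_F^2$ term and $\norm{\vy^{(i)}}_F^2\le 2\norm{\Delta_y^{(i)}}_F^2+2n\norm{\pi_A^\top\vy^{(i)}}^2$, the bound for $\sum_k\norm{\Delta_y^{(k+1)}}_F^2$ reproduces $\sum_i\norm{\Delta_y^{(i)}}_F^2$ on its right-hand side with a coefficient $\propto\alpha^2 s_A^2\theta_A^2 L^2 n$. Since the two sums differ only by the boundary indices $i=0$ and $i=K+1$, this term can be absorbed into the left-hand side once $\alpha$ is small enough that the coefficient is at most $\tfrac12$, which is what removes $\Delta_y$ from the final statement (the $i=0$ boundary term folds into $\norm{\nabla f(\vx^{(0)})}_F^2$). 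A secondary difficulty is propagating the geometric weights $\beta_A^{l}$ emerging from $\mathrm{II}_k$ correctly — in particular re-expanding $\vg^{(l)}=\vg^{(0)}+\sum_{j<l}\Delta_g^{(j)}$ so that the $\vg^{(0)}$ part gives the clean $\norm{\nabla f(\vx^{(0)})}_F^2$ contribution while the increments fold back into the same $\Delta_g$ estimates — but this is routine bookkeeping once the geometric summations are organized and only affects the explicit values of $C_{y,\sigma},C_{y,0},C_{y,x},C_{y,y}$.
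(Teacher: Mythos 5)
Your route is essentially the paper's: decompose $(I-A_\infty)\vy^{(k+1)}$ via Lemma~\ref{lem(main):5} into the gradient-increment and diagonal-drift rolling sums, control the first with Lemma~\ref{lem: rolling sum} and $\norm{D_{i+1}^{-1}}_2\le\theta_A$, control the second with the geometric decay from Lemma~\ref{lem: diag converge}, and reduce everything to $\sum_i\EE\norm{\Delta_g^{(i)}}_F^2$, which is then expanded via noise independence and $L$-smoothness into the advertised terms. Your telescoping observation $\sum_{i=0}^{l}\Delta_g^{(i-1)}=\vg^{(l)}$ is a clean reformulation of the second piece; the paper keeps the double sum and bounds $\norm{(A-A_\infty)^{k+1-l}(D_{l+1}^{-1}-D_l^{-1})}_2^2\le n\kappa_A^2\beta_A^{2k+2}$ directly, which is the same geometric convolution in different bookkeeping. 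The one substantive divergence is your treatment of $\vx^{(i+1)}-\vx^{(i)}$: you write it as $(A-I)\Delta_x^{(i)}-\alpha A\vy^{(i)}$, which reintroduces the full $\vy^{(i)}$ and hence $\Delta_y^{(i)}$, forcing the self-referential absorption you anticipate and making your version of the lemma hold only under a step-size restriction of the form $\alpha^2 s_A^2\theta_A^2L^2 n\lesssim 1$. The paper sidesteps this entirely with the identity $\vx^{(i+1)}-\vx^{(i)}=\Delta_x^{(i+1)}-\Delta_x^{(i)}-\alpha A_\infty\vy^{(i)}$ (since $A_\infty\vx^{(i+1)}=A_\infty\vx^{(i)}-\alpha A_\infty\vy^{(i)}$), so that only the rank-one component $A_\infty\vy^{(i)}=\one\pi_A^\top D_i^{-1}\vg^{(i)}$ appears, splitting cleanly into the $d_k\sigma^2$ and $\norm{\pi_A^\top D_k^{-1}\nabla f(\vx^{(k)})}^2$ terms with no $\Delta_y$ on the right-hand side; this keeps the lemma unconditional and defers all absorption to the subsequent $x$-consensus lemma. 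You should adopt that identity. Finally, your constants will differ from the stated ones (e.g., you get $s_A^2\theta_A^2$ where the paper gets $s_AM_A\theta_A^2$ in $C_{y,\sigma}$, because the paper exploits the martingale-difference structure of the noise inside the rolling sum before applying Cauchy--Schwarz rather than after), which you correctly flag as bookkeeping.
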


\begin{proof}
Using Lemma~\ref{lem(main):5}, we know that $(I-A_\infty)\vy^{(k+1)}$ can be decomposed to two rolling sums \(\sum_{i=0}^{k}(A-A_\infty)^{k+1-i}D_{i+1}^{-1}\Delta_g^{(i)}\) and \(\sum_{i=0}^{k}\sum_{l=i}^{k}(A-A_\infty)^{k+1-l}(D_{l+1}^{-1}-D_l^{-1}) \Delta_g^{(i-1)}\). For the first rolling sum, using Assumption~\ref{ass:bounded noise} that noise are linearly independent, we can divide $\Delta_g^{(i)}$ into $\nabla f(\vx^{(i+1)})-\nabla f(\vx^{(i)})$ and two noise parts, thus applying the Cauchy-Schwarz inequality:
    \begin{align}\label{eq:y consensus-1}
        \quad \EE \norm{\sum_{i=0}^k(A-A_\infty)^{k+1-i}D_{i+1}^{-1}\Delta_g^{(i)}}_F^2
        &\le 6{n}\EE \sum_{i=0}^k\norm{(A-A_\infty)^{k+1-i}D_{i+1}^{-1}}_2^2 \sigma^2\\
        &\quad +3\EE \norm{\sum_{i=0}^k(A-A_\infty)^{k+1-i}D_{i+1}^{-1}(\nabla f(\vx^{(i+1)})-\nabla f(\vx^{(i)}))}_F^2\nonumber
    \end{align}

    Sum up \eqref{eq:y consensus-1} from $k=0$ to $K$, we have:
    \begin{align}\label{eq:y consensus-2}
        &\quad \sum_{k=0}^K \EE \norm{\sum_{i=0}^k(A-A_\infty)^{k+1-i}D_{i+1}^{-1}\Delta_g^{(i)}}_F^2\\
        &\le 6n\sigma^2 \sum_{k=0}^K\sum_{i=0}^k\EE\norm{(A-A_\infty)^{k+1-i}D_{i+1}^{-1}}_2^2\nonumber\\
        &\quad +3\sum_{k=0}^K\EE \norm{\sum_{i=0}^k(A-A_\infty)^{k+1-i}D_{i+1}^{-1}(\nabla f(\vx^{(i+1)})-\nabla f(\vx^{(i)}))}_F^2\nonumber\\
        &\le 6ns_AM_A\theta_A^2(K+1)\sigma^2+3s_A^2\theta_A^2 \sum_{i=0}^K \EE\norm{\nabla f(\vx^{(i+1)})-\nabla f(\vx^{(i)})}_F^2\nonumber\\
        &\le 6ns_AM_A\theta_A^2(K+1)\sigma^2+3s_A^2\theta_A^2 L^2\sum_{i=0}^K \EE\norm{\Delta_x^{(i+1)}-\Delta_x^{(i)}-\alpha A_\infty \vy^{(k)}}_F^2\nonumber\\
        &\le 6ns_AM_A\theta_A^2(K+1)\sigma^2+18s_A^2\theta_A^2 L^2\sum_{i=0}^K \EE\norm{\Delta_x^{(i+1)}}_F^2+9\alpha^2L^2s_A^2\theta_A^2\sum_{i=0}^K\EE\norm{A_\infty \vy^{(i)}}^2_F,\nonumber
    \end{align}
where the second inequality uses the rolling sum lemma, the third inequality uses the L-smooth assumption, the final inequality uses Cauchy-Schwartz inequality.

Next, we consider the second part. Similar to \eqref{eq:y consensus-1}, we have
\begin{align}\label{eq:y-consensus-3}
    &\quad\EE\norm{\sum_{i=0}^k\sum_{l=i}^k(A-A_\infty)^{k+1-l}(D_{l+1}^{-1}-D_l^{-1}) \Delta_g^{(i-1)}}_F^2\\
    &\le 8n\sigma^2\sum_{i=0}^k\norm{\sum_{l=i}^k(A-A_\infty)^{k+1-l}(D_{l+1}^{-1}-D_l^{-1})}_2^2 \nonumber\\
    &\quad +4\EE\norm{\sum_{i=1}^{k}\sum_{l=i}^k (A-A_\infty)^{k+1-l}(D_{l+1}^{-1}-D_l^{-1})(\nabla f(\vx^{(i)})-\nabla f(\vx^{(i-1)}))}_F^2\nonumber\\
    &\quad +4\EE\norm{\sum_{l=0}^k (A-A_\infty)^{k+1-l}(D_{l+1}^{-1}-D_l^{-1})\nabla f(\vx^{(0)})}_F^2\nonumber\\
    &\le 8n\sigma^2 \sum_{i=0}^k n\kappa_A^2\beta_A^{2k+2}+4k\sum_{i=1}^k\norm{\sum_{l=i}^k (A-A_\infty)^{k+1-l}(D_{l+1}^{-1}-D_l^{-1})}_2^2\EE\norm{(\nabla f(\vx^{(i)})-\nabla f(\vx^{(i-1)}))}_F^2\nonumber\\
    &\quad +4\norm{\sum_{l=0}^k (A-A_\infty)^{k+1-l}(D_{l+1}^{-1}-D_l^{-1})}_2^2\norm{\nabla f(\vx^{(0)})}_F^2\nonumber\\
    &\le 8n^2\sigma^2(k+1)\kappa_A^2\beta_A^{2k+2}+4k^3\sum_{i=1}^k n\kappa_A^2\beta_A^{2k+2}\EE\norm{\nabla f(\vx^{(i)})-\nabla f(\vx^{(i-1)})}_F^2\nonumber\\
    &\quad +4(k+1)^2n\kappa_A^2\beta_A^{2k+2}\norm{\nabla f(\vx^{(0)})}_F^2,\nonumber
\end{align}
where the first inequality is similar to the first inequality of \eqref{eq:y consensus-1}, the second and third inequality use the fact that $\norm{(A-A_\infty)^{k+1-l}(D_{l+1}^{-1}-D_l^{-1})}_2^2\le n\kappa_A^2\beta_A^{2k+2}$.

Sum up \eqref{eq:y-consensus-3} from $k=0$ to $K$, we have:
\begin{align}\label{eq:y-consensus-4}
    &\quad\sum_{k=0}^K\EE\norm{\sum_{i=0}^k\sum_{l=i}^k(A-A_\infty)^{k+1-l}(D_{l+1}^{-1}-D_l^{-1}) \Delta_g^{(i-1)}}_F^2\\
    &\le 8n^2\kappa_A^2\sigma^2 \sum_{k=0}^K (k+1)\beta_A^{2k+2}+4n\kappa_A^2\beta_A^2 \norm{\nabla f(\vx^{(0)})}_F^2 \sum_{k=0}^K (k+1)^2\beta_A^{2k+2}\nonumber\\
    &\quad +4n\kappa_A^2\beta_A^2\sum_{k=0}^K\sum_{i=1}^k{k^3}\beta_A^{2k}\EE\norm{\nabla f(\vx^{(i)})-\nabla f(\vx^{(i-1)})}_F^2\nonumber\\
    &\le \frac{8n^2\kappa_A^2\beta_A^2(K+1)\sigma^2}{1-\beta_A^2} +\frac{8n\kappa_A^2\beta_A^2\norm{\nabla f(\vx^{(0)})}_F^2}{(1-\beta_A^2)^3}\nonumber\\
    &\quad + \frac{24n\kappa_A^2\beta_A^2}{(1-\beta_A^2)^4} \sum_{i=1}^K \EE \norm{\nabla f(\vx^{(i)})-\nabla f(\vx^{(i-1)})}_F^2\nonumber\\
    &\le \frac{8n^2\kappa_A^2\beta_A^2(K+1)\sigma^2}{1-\beta_A^2} +\frac{8n\kappa_A^2\beta_A^2\norm{\nabla f(\vx^{(0)})}_F^2}{(1-\beta_A^2)^3}\nonumber\\
    &\quad + \frac{144n\kappa_A^2\beta_A^2L^2}{(1-\beta_A^2)^4}\sum_{i=1}^K\EE\norm{\Delta_x^{(i)}}_F^2+\frac{72\alpha^2L^2n\kappa_A^2\beta_A^2}{(1-\beta_A^2)^4}\sum_{i=1}^K \EE\norm{A_\infty \vy^{(i)}}_F^2\nonumber
\end{align}
Finally, from the proof of Lemma~\ref{lem(main):1} we know that $\EE[\norm{A_\infty\vy^{(k)}}_F^2]=n\sigma^2d_k+n\norm{\pi_A^\top D_k^{-1}\nabla f(\vx^{(k)})}^2$. By replacing $\EE[\norm{A_\infty\vy^{(k)}}_F^2]$ we obtain the lemma for $\Delta_y$.
\end{proof}
%%%%%%%%%%%%%%%%%%%%%%%%%%%%%%%%%%%%%%%%%%%%%%%%%%%%%%%%%%%%%%%%%%
%%%%%%%%%%%%%%%%%%%%%%%%%%%%%%%%%%%%%%%%%%%%%%%%%%%%%%%%%%%%%%%%%%
The following Lemma is a formal version of Lemma~\ref{lem(main):6}.
\begin{lemma}[Consensus Lemma for x]\label{lem:x-consensus}
When $\alpha \le \frac{1}{s_AL\sqrt{2C_{y,x}}}$ and $K>\frac{2\kappa_A\theta_A^2}{1-\beta_A}$, we have
    \begin{align}
        \sum_{k=0}^K \EE\norm{\Delta_x^{(k+1)}}_F^2 &\le 2\alpha^2 s_A^2 (C_{y,\sigma}+3n\alpha^2L^2C_{y,y})(K+1)\sigma^2+4n\alpha^2Ls_A^2C_{y,0}\Delta\\
        &\quad+2\alpha^4s_A^2L^2 C_{y,y}\sum_{k=0}^K \EE\norm{\pi_A^\top D_k^{-1}\nabla f(\vx^{(k)})}_F^2\nonumber
    \end{align}
    where $C_{y,x}$, $C_{y,\sigma}$, $C_{y,0}$ and $C_{y,y}$ are defined as in Lemma~\ref{lem:y-consensus}.
\end{lemma}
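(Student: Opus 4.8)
The plan is to bound the consensus error $\sum_{k=0}^K\EE\norm{\Delta_x^{(k+1)}}_F^2$ by reducing it to the gradient-tracking consensus error through the closed-form rolling-sum representation of Lemma~\ref{lem(main):5}, then invoking the $\vy$-consensus estimate of Lemma~\ref{lem:y-consensus}, and finally closing a self-referential inequality by an absorption argument. The key observation is that Lemma~\ref{lem:y-consensus} bounds $\sum_k\EE\norm{(I-A_\infty)\vy^{(k+1)}}_F^2$ in terms of the very quantity $\sum_k\EE\norm{\Delta_x^{(k+1)}}_F^2$ that we want to control, so after substitution the left-hand side reappears on the right with coefficient $\alpha^2 s_A^2 C_{y,x}L^2$. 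The step-size restriction $\alpha\le \frac{1}{s_A L\sqrt{2C_{y,x}}}$ is calibrated precisely so that this feedback coefficient is at most $1/2$.

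First I would start from $\Delta_x^{(k+1)}=-\alpha\sum_{i=0}^k(A-A_\infty)^{k+1-i}\Delta_y^{(i)}$ (Lemma~\ref{lem(main):5}), where $\Delta_y^{(i)}=(I-A_\infty)\vy^{(i)}$. Using the identity $(A-A_\infty)^m=A^m-A_\infty$ valid for $m\ge 1$, this is exactly the rolling-sum form covered by the Rolling Sum Lemma~\ref{lem: rolling sum} with $l=1$, so squaring, summing over $k$, and taking expectations gives
\begin{align*}
\sum_{k=0}^K\EE\norm{\Delta_x^{(k+1)}}_F^2\le \alpha^2 s_A^2\sum_{i=0}^K\EE\norm{(I-A_\infty)\vy^{(i)}}_F^2.
\end{align*}
A minor reindexing is needed since Lemma~\ref{lem:y-consensus} is stated for the shifted sum $\sum_{k=0}^K\EE\norm{(I-A_\infty)\vy^{(k+1)}}_F^2$; I would split off the $i=0$ term $\EE\norm{(I-A_\infty)\vy^{(0)}}_F^2=\EE\norm{(I-A_\infty)\vg^{(0)}}_F^2$ and absorb it into the initialization constant, bounding the remaining indices by Lemma~\ref{lem:y-consensus}.

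Substituting the $\vy$-consensus bound then yields an inequality of the schematic form $S\le \alpha^2 s_A^2\big(c_\sigma(K+1)\sigma^2+\alpha^2C_{y,y}\sigma^2\sum_k d_k+C_{y,0}\norm{\nabla f(\vx^{(0)})}_F^2+C_{y,x}L^2 S+\alpha^2 L^2 C_{y,y}\sum_k\EE\norm{\pi_A^\top D_k^{-1}\nabla f(\vx^{(k)})}_F^2\big)$, where $S:=\sum_{k=0}^K\EE\norm{\Delta_x^{(k+1)}}_F^2$. Since $K$ is finite, $S<\infty$, so I can legitimately move the $\alpha^2 s_A^2 C_{y,x}L^2 S$ term to the left; under $\alpha\le \frac{1}{s_A L\sqrt{2C_{y,x}}}$ its coefficient is at most $1/2$, giving $\tfrac12 S\le\alpha^2 s_A^2(\text{remaining terms})$ and hence $S\le 2\alpha^2 s_A^2(\text{remaining terms})$, which explains the prefactor $2\alpha^2 s_A^2$ in the claim.

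It then remains to simplify the residual terms into the stated form. For the initialization term I would use $L$-smoothness together with Assumption~\ref{ass:smooth}: since $\vx^{(0)}$ is consensual, $\norm{\nabla f(\vx^{(0)})}_F^2=\sum_i\norm{\nabla f_i(\bm{x}^{(0)})}^2\le 2L\sum_i(f_i(\bm{x}^{(0)})-f_i^*)\le 2nL\Delta$, turning $2\alpha^2 s_A^2 C_{y,0}\norm{\nabla f(\vx^{(0)})}_F^2$ into $4n\alpha^2 L s_A^2 C_{y,0}\Delta$. For the noise term I would control $\sum_{k=0}^K d_k$ with $d_k=\sum_j([\pi_A]_j/[D_k]_{jj})^2$: the first statement of Lemma~\ref{lem: diag converge} gives $([\pi_A]_j/[D_k]_{jj}-1)^2\le n\kappa_A\theta_A^2\beta_A^{2k}$, so $d_k\le 2n+2n^2\kappa_A\theta_A^2\beta_A^{2k}$, and summing the geometric tail shows that the lower bound $K>\frac{2\kappa_A\theta_A^2}{1-\beta_A}$ is exactly what forces $\sum_k d_k\lesssim n(K+1)$, collapsing the $d_k$-term into the $(K+1)\sigma^2$ coefficient. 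The main obstacle is the self-referential absorption: one must verify that $S$ is finite before transposing it and check that the step-size threshold makes the feedback coefficient strictly controllable; the secondary difficulty is taming $\sum_k d_k$ over the unstable early iterations where $[A^k]_{jj}$ can be tiny, which is precisely where the lower bound on $K$ and Assumption~\ref{ass:diag} enter.
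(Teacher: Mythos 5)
Your proposal follows essentially the same route as the paper's proof: the rolling-sum representation of $\Delta_x^{(k+1)}$ plus Lemma~\ref{lem: rolling sum} to get $\sum_k\EE\norm{\Delta_x^{(k+1)}}_F^2\le\alpha^2 s_A^2\sum_k\EE\norm{\Delta_y^{(k)}}_F^2$, substitution of Lemma~\ref{lem:y-consensus}, absorption of the $\alpha^2 s_A^2 L^2 C_{y,x}S$ feedback term under the stated step-size condition, the bound $\norm{\nabla f(\vx^{(0)})}_F^2\le 2nL\Delta$, and the estimate $\sum_k d_k\le 3n(K+1)$ via the condition on $K$ — and you correctly flag the index shift between $\Delta_y^{(k)}$ and the $\vy^{(k+1)}$ sum that the paper glosses over. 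The only quibble is that your per-coordinate bound on $[\pi_A]_j/[D_k]_{jj}-1$ summed over $j$ picks up an extra factor of $n$ relative to the paper's direct use of $\norm{\pi_A^\top D_k^{-1}-\one^\top}^2\le n\kappa_A\theta_A^2\beta_A^{2k}$, so to recover $\sum_k d_k\le 3n(K+1)$ under exactly the stated threshold on $K$ you should bound the whole vector norm at once rather than coordinatewise.
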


\begin{proof}
Note that $\Delta_x^{(k+1)}=-\alpha\sum_{i=0}^k(A-A_\infty)^{k+1-i}\Delta_y^{(i)}$, we apply the rolling sum lemma and obtain that
    \begin{align}\label{eq:x-consensus-1}
        \sum_{k=0}^K \EE\norm{\Delta_x^{(k+1)}}_F^2 &\le \alpha^2 s_A^2 \sum_{k=0}^K\norm{\Delta_y^{(k)}}_F^2\\
        &\overset{\text{Lemma}~\ref{lem:y-consensus}}{\le} \alpha^2s_A^2 C_{y,\sigma}(K+1)\sigma^2+\alpha^2s_A^2 C_{y,0}\norm{\nabla f(\vx^{(0)})}_F^2+\alpha^4s_A^2L^2\sigma^2C_{y,y}\sum_{k=0}^K d_k \nonumber\\
        &\quad+ \alpha^2L^2s_A^2 C_{y,x}\sum_{k=0}^K \EE\norm{\Delta_x^{(k+1)}}_F^2+\alpha^4L^2s_A^2 C_{y,y}\sum_{k=0}^K \EE\norm{A_\infty \vy^{(k)}}_F^2\nonumber
    \end{align}

    To estimate $\sum_{k=0}^K d_k$, notice that
\begin{align*}
    d_k=&\sum_{j=1}^n(\frac{[\pi_A]_j}{[D_k]_{j}})^2=\sum_{j=1}^n (1+\frac{2([\pi_A]_j-[D_k]_{j})}{[D_k]_{j}}+\frac{([\pi_A]_j-[D_k]_{j})^2}{[D_k]_{j}^2})\\
    \le& 2n+2\norm{\pi_A^\top D_k^{-1}-\one^\top}^2\le 2n+2\theta_A^2\kappa_A n\beta_A^{2k}
\end{align*}
Therefore, 
\(
\sum_{k=0}^Kd_k\le 2n(K+1)+\frac{2\theta_A^2\kappa_A n}{1-\beta_A^2}
\). 
When $K\ge {\frac{2\kappa_A \theta_A^2}{1-\beta_A}}$,  we have $\sum_{k=0}^Kd_k\le 3n(K+1)$.
    When $\alpha \le \frac{1}{s_AL\sqrt{2C_{y,x}}}\triangleq\alpha_3$, $\alpha^2s_A^2C_{y,x}\le 1/2$. Therefore, we can subtract $\frac{1}{2}\sum_{k=0}^T \EE\norm{\Delta_x^{(k+1)}}_F^2$ from both sides of \eqref{eq:x-consensus-1}. Finally, note that $\norm{\nabla f(\vx^{(0)})}_F^2\le 2nL\Delta$, we obtain the lemma.
    
\end{proof}

\subsection{Proof of Theorem~\ref{thm(main):pulldiag convergence}}\label{subsec(app):theorem 2}
\hypertarget{proof4}{}
Using Lemma~\ref{lem(main):descent deviation} to estimate the descent deviation in Lemma~\ref{lem(main):1}, we have
\begin{align*}
        \frac{n\alpha}{2}\EE[\norm{\nabla f(\bm{w}^{(k)})}^2]&\le 
        \EE[f(\bm{w}^{(k)})-f(\bm{w}^{(k+1)})]+\alpha L^2(1+2\theta_A^2\kappa_A\beta_A^{2k}){\EE[\norm{\Delta_x^{(k)}}_F^2]}
        +4\alpha\theta^2\kappa_AnL\mathbb{E}[(f(\bm{w}^{(k)})-f^*))]\\
        &\quad+\frac{\alpha^2 L \sigma^2}{2}d_{k}-\frac{\alpha}{4n}\EE[\norm{\pi_A^\top D_k^{-1}\nabla f(\vx^{(k)})}^2],\quad \forall k \ge 1.
    \end{align*}
Then, we can apply Lemma~\ref{lem(app):absorb extra term}, where we set $\beta=\beta_A$, $c=4n\theta_A^2\kappa_A L$,  $\Delta_k=\EE[f(\bm{w}^{(k)})-f^*]$, $S_k=\frac{n\alpha}{2}\EE[\norm{\nabla f(\bm{w}^{(k)})}^2]+\frac{\alpha}{4n}\EE[\norm{\pi_A^\top D_k^{-1}\nabla f(\vx^{(k)})}^2]$, $F_k=\frac{\alpha^2 L\sigma^2}{2}d_k+\alpha L^2(1+2\theta_A^2\kappa_A\beta_A^{2k}) \EE[\norm{\Delta_x^{(k)}}^2]$. When $\alpha \le \frac{1-\beta_A}{4n\theta_A^2\kappa_AL}$, we obtain that 
    \begin{align}\label{eq:descent-3}
    \frac{n\alpha}{2} \sum_{k=1}^K\EE[\norm{\nabla f(\bm{w}^{(k)})}^2]&\le 3\EE[f(\bm{w}^{(1)})-f^*] -\frac{\alpha}{4n}\sum_{k=1}^K\EE[\norm{\pi_A^\top D_k^{-1}\nabla f(\vx^{(k)})}^2]\\
        &\quad +3\alpha L^2(1+2\theta_A^2\kappa_A\beta_A^{2})\sum_{k=1}^K\norm{\Delta_x^{(k)}}_F^2+\frac{3\alpha^2L\sigma^2}{2}\sum_{k=1}^Kd_k\nonumber
    \end{align}

For $k=0$, note that $\Delta_x^{(0)}=0$ and $D_0=I_n$, the descent lemma provides 
\begin{align}\label{eq:descent-4}
    \EE[f(\bm{w}^{(1)})-f^*]&\le -\frac{n\alpha}{2}\norm{\nabla f(\bm{x}^{(0)})}^2+ f(\bm{x}^{(0)})-f^*-\frac{\alpha}{4n}\EE[\norm{\pi_A^\top \nabla f(\vx^{(0)})}^2]+\frac{\alpha}{n}\EE[\norm{(\pi_A^\top-\one^\top)\nabla f(\vx)^{(0)}}^2]+\frac{\alpha L \sigma^2}{2}d_0\nonumber\\
    &\le \Delta -\frac{n\alpha}{2}\norm{\nabla f(\bm{x}^{(0)})}^2-\frac{\alpha}{4n}\EE[\norm{\pi_A^\top \nabla f(\vx^{(0)})}^2]+2\alpha L\Delta n^{-1}\norm{\pi_A^\top-\one^\top}^2+\frac{\alpha L \sigma^2}{2}d_0\nonumber\\
    &\le \Delta -\frac{n\alpha}{2}\norm{\nabla f(\bm{x}^{(0)})}^2-\frac{\alpha}{4n}\EE[\norm{\pi_A^\top \nabla f(\vx^{(0)})}^2]+2\alpha L\Delta +\frac{\alpha L \sigma^2}{2}d_0
\end{align}
 Using \eqref{eq:descent-4} and estimate of $\sum_{k=0}^Kd_k$ in \eqref{eq:descent-3}, we obtain that
\begin{align}\label{eq:descent-5}
    \frac{n\alpha}{2} \sum_{k=0}^K\EE[\norm{\nabla f(\bm{w}^{(k)})}^2]&\le  3(1+2\alpha L)\Delta-\frac{\alpha}{4n}\sum_{k=0}^K\EE[\norm{\pi_A^\top D_k^{-1}\nabla f(\vx^{(k)})}^2]\\
        &\quad +3\alpha L^2(1+2\theta_A^2\kappa_A\beta_A^{2})\sum_{k=1}^K\norm{\Delta_x^{(k)}}_F^2+\frac{9n\alpha^2L\sigma^2}{2}(K+1)\nonumber
\end{align}

Note that $\alpha\le \frac{1}{2nL}$, $3(1+2\alpha L)\le 6$. We further plug in Lemma~\ref{lem:x-consensus} in \eqref{eq:descent-3} and obtain that 
\begin{align}\label{eq:descent-6}
    &\quad \frac{1}{K+1}\sum_{k=0}^K\EE[\norm{\nabla f(\bm{w}^{(k)})}^2]\nonumber\\
    &\le \frac{12\Delta}{n\alpha(K+1)}+15\alpha L \sigma^2+(12n^{-1}\alpha ^4s_A^2L^2(1+\theta_A^2\kappa_A\beta_A^{2})C_{y,y}-0.5n^{-2})\frac{1}{K+1}\sum_{k=0}^K\EE[\norm{\pi_A^\top D_k^{-1}\nabla f(\vx^{(k)})}^2]\nonumber\\
        &\quad +6n^2s_A^2\alpha^2 L^2(1+\theta_A^2\kappa_A\beta_A^{2})(2C_{y,\sigma}+6n\alpha^2L^2C_{y,y})\sigma^2+\frac{12L^3(1+2\theta_A^2\kappa_A\beta_A^2)\alpha^2s_A^2C_{y,0}\Delta}{K+1}.
\end{align}
When $\alpha\le\left(\frac{1}{24ns_A^2L^4(1+\theta_A^2\kappa_A\beta_A^2)C_{y,y}}\right)^\frac{1}{4}\triangleq \alpha_4$, the coefficient of $\sum_{k=0}^K\EE[\norm{\pi_A^\top D_k^{-1}\nabla f(\vx^{(k)})}^2]$ is negative. When $\alpha\le \min\{ \frac{1}{\sqrt{24C_{y,\sigma}(1+\theta_A^2\kappa_A\beta_A^2)}s_AnL}\triangleq\alpha_5,\left(\frac{1}{72n^3s_A^2L^3C_{y,y}}\right)^{\frac{1}{3}}\triangleq\alpha_6\}$, the coefficient $6n^2s_A^2\alpha^2 L^2(1+\theta_A^2\kappa_A\beta_A^{2})(2C_{y,\sigma}+6n\alpha^2L^2C_{y,y})\le \alpha L$. When $\alpha\le \frac{1}{s_AL\sqrt{(1+2\theta_A^2\kappa_A^2)\beta_A C_{y,0}}}\triangleq\alpha_7$, the last term $\frac{12L^3(1+2\theta_A^2\kappa_A\beta_A^2)\alpha^2s_A^2C_{y,0}\Delta}{K+1}\le \frac{L\Delta}{K+1}$. Therefore, with sufficiently small $\alpha$, we have
\begin{align}
    &\quad\frac{1}{K+1}\sum_{k=0}^K \EE[\norm{\nabla f(w^{(k)})}^2] \le \frac{12\Delta}{ n\alpha(K+1)}+16\alpha L\sigma^2+  \frac{L\Delta}{K+1}. \nonumber
\end{align}

The $\alpha$ here should be smaller than $\min_{1\le i\le 7}\left\{\alpha_i\right\}$, where $\alpha_1=\frac{1}{2nL}$, $\alpha_2=\frac{1-\beta_A}{4n\theta_A^2\kappa_A\beta_AL}$, $\alpha_3=\frac{1}{s_AL\sqrt{2C_{y,x}}}$, $\alpha_4=\left(\frac{1}{24ns_A^2L^4(1+\theta_A^2\kappa_A\beta_A^2)C_{y,y}}\right)^\frac{1}{4}$, $\alpha_5=\frac{1}{\sqrt{24C_{y,\sigma}(1+\theta_A^2\kappa_A\beta_A^2)}s_AnL}$, $\alpha_6=\left(\frac{1}{72n^3s_A^2L^3C_{y,y}}\right)^{\frac{1}{3}}$, $\alpha_7=\frac{1}{s_AL\sqrt{(1+2\theta_A^2\kappa_A\beta_A^2) C_{y,0}}}$. $C_{y,x}$, $C_{y,y}$, $C_{y,\sigma}$, $C_{y,0}$ are positive constants defined in Lemma~\ref{lem:y-consensus} and are only decided by the mixing matrix $A$. 

Finally, by selecting $\alpha=1/(\sqrt{\frac{4L\sigma^2n(K+1)}{3\Delta}}+\sum_{i=1}^7 \alpha_i^{-1})$ which is smaller than $\min_{1\le i\le 7}\left\{\alpha_i\right\}$, we have
\begin{align}
    &\quad\frac{1}{K+1}\sum_{k=0}^K \EE[\norm{\nabla f(w^{(k)})}^2] \lesssim\sqrt{\frac{L\Delta\sigma^2}{n(K+1)}}+\frac{\Delta(L+n^{-1}\sum_{i=1}^7 \alpha_i^{-1})}{K+1}. \nonumber
\end{align}
Define $n^{-1}\sum_{i=1}^7 \alpha_i^{-1}= LC_A$, where $C_A$ is a positive constant decided by the mixing matrix $A$, we have
\begin{align}
    &\quad\frac{1}{K+1}\sum_{k=0}^K \EE[\norm{\nabla f(w^{(k)})}^2] \lesssim\sqrt{\frac{L\Delta\sigma^2}{nK}}+\frac{\Delta L(1+C_A)}{K} .\nonumber
\end{align}
\hfill $\square$

\section{Convergence of MG-\pulldiag-GT}\label{app:mg convergence}
\hypertarget{conMG}{}
Compared to vanilla \pulldiag-GT, MG-\pulldiag-GT introduces two notable changes: multiple gossip and the utilization of batch-average gradients. By calculating the gradient across $R$ batches, under Assumption~\ref{ass:bounded noise}, it follows that the variance of $g$ decreases from $\sigma^2$ to $\hat{\sigma}^2=\frac{\sigma^2}{R}$. The term multiple gossip signifies the substitution of $A$ with $A^R$; in this scenario, $\beta_A$, $s_A$, $\theta_A$ and $C_A$ are all modified according to the respective measure of $A^R$. Therefore, we define $\hat{\beta}_A=\beta_{A^R}=\norm{A^R-A_\infty}_{\pi_A}$, $\hat{\theta_A}=\sup_{k\ge 1}\{ [\mathrm{diag}(A^kR)]_i^{-1}\}$, $\hat{s}_A=s_{A^R}=\sup_{k\ge 1} \norm{A^kR-A_\infty}_2\cdot\frac{2(1+\ln(\kappa_A)}{1-\hat{\beta_A}}$. $\hat{C}_A=C_{A^R}$. We define $\hat{\alpha}_i$ has the same expression with $\alpha_i$, but their $\beta_A$, $s_A$, $\theta_A$ are replaced with $\hat{\beta}_A$, $\hat{s}_A$, $\hat{\theta}_A$, respectively.

Using the conclusion of Theorem~\ref{thm(main):pulldiag convergence}, we obtain 
\begin{align}\label{eq: mg-1}
    \frac{1}{T+1}\sum_{t=0}^T\EE[\norm{\nabla f(\bm{w}^{(k)})}^2]\lesssim \frac{\hat{\sigma}\sqrt{L\Delta}}{\sqrt{nT}}+\frac{L\Delta(1+\hat{C}_A)}{T}\overset{T=\frac{K}{R}}{=}\frac{\sigma\sqrt{L\Delta}}{\sqrt{nK}}+\frac{L\Delta(1+\hat{C}_A)R}{K}
\end{align}

Next, we demonstrate that when $R=\frac{r}{1-\beta_A}$ and $r>3+3\ln(\kappa_A)+3\ln(n)$, $\hat{C}_A$ is necessarily an absolute constant. Under our definition, $\hat{C}_A=\sum_{i=1}^7 (nL\hat{\alpha}_i)^{-1}$, so it suffices to establish that $(nL\hat{\alpha}_i)^{-1}$ is an absolute constant, $\forall 1\le i\le 7$.

For $\hat{\alpha}_1=\frac{1}{2nL}$, $(nL\hat{\alpha}_1)^{-1}=2$. Before estimating $(nL\hat{\alpha}_i)^{-1}, \forall\, 2\le i \le 7$, we show that $1-\hat{\beta}_A=\Omega(1)$, $\hat{\theta}_A\le 2n\kappa_A$.

\[\hat{\beta}_A=\norm{A^R-A_\infty}_{\pi_A}\le \norm{A-A_\infty}_{\pi_A}^R=\beta_A^R=\exp(r\frac{\ln(\beta_A)}{1-\beta_A})\le \exp(-r)\le \frac{1}{e^3n^3\kappa_A^3}, \quad 1-\hat{\beta}_A\in (1-1/e, 1]. \]

When $k\ge 1$, 
\[[A^{kR}]_{ii}= [\pi_A]_i+[A^{kR}-A_\infty]_{ii}\ge \underline{\pi_A}-\norm{A^{kR}-A_\infty}_F\ge \underline{\pi_A}-\sqrt{n\kappa_A}\beta_A^{R}\ge \frac{1}{n\kappa_A}-\sqrt{n\kappa_A} \exp(-r) \ge \frac{1}{n\kappa_A}-\frac{1}{2n\kappa_A}=\frac{1}{2n\kappa_A} \]
Therefore, 
\(\hat{\theta}_A = \sup_{k\ge 1} \max_i[A^{kR}]_{ii}^{-1} \le 2n\kappa_A.\) Additionally, we have

\[\hat{s}_A=\sup_{k\ge 1} \norm{A^{kR}-A_\infty}_2\cdot\frac{2(1+\ln(\kappa_A))}{1-\hat{\beta_A}}\le 4\sqrt{\kappa_A}\beta_A^R(1+\ln(\kappa_A))\le 4\sqrt{\kappa_A}(1+\ln(\kappa_A))\exp(-r)\le \frac{1}{n\kappa_A}. \]

Note that $\hat{\alpha}_2=\frac{1-\hat{\beta}_A}{4n\hat{\theta}_A^2\kappa_A\hat{\beta}_AL}\ge\frac{\exp(r)}{8n^3\kappa_A^3L}\ge \frac{1}{nL}$, thus, \(n^{-1}L^{-1}\hat{\alpha}_2^{-1}\le 1\).

For $\hat{\alpha}_3$, we have
\[(nL\hat{\alpha}_3)^{-1}=\frac{\hat{s}_A\sqrt{36\hat{s}_A^2\hat{\theta}_A^2+\frac{288n\kappa_A^2\hat{\beta}_A^2}{(1-\hat{\beta}_A^2)^4}}}{n}\le 100\frac{\hat{s}_A^2\kappa_A+\sqrt{n}\kappa_A\hat{\beta}_A}{n}\le 100.\]

For $\hat{\alpha}_4$, we have

\[(nL\hat{\alpha}_4)^{-1}=\left(\frac{24\hat{s}_A^2(1+\hat{\theta}_A^2\kappa_A\hat{\beta}_A^2)(9n\hat{s}_A^2\hat{\theta}_A^2+\frac{72n^2\kappa_A^2\hat{\beta}_A^2}{(1-\hat{\beta_A}^2)^4}))}{n^3}\right)^\frac{1}{4}\le 10\left(\frac{\hat{s}_A^4n^3\kappa_A^2+n^2\kappa_A^2\exp(-2r)}{n^3}\right)^{\frac{1}{4}}\le 10. \]
For $\hat{\alpha}_5$, we have

\[(nL\hat{\alpha}_5)^{-1}=\hat{s}_A\sqrt{24(6n\hat{s}_A\hat{M}_A\hat{\theta}_A^2+\frac{8n^2\kappa_A^2\hat{\beta}_A^2}{1-\hat{\beta_A}^2})(1+\hat{\theta}_A^2\kappa_A\hat{\beta}_A^2)}\le 40\hat{s}_A\sqrt{n\hat{s}_A\hat{M}_A\kappa_A^2+n^2\kappa_A^2\hat{\beta}_A^2}\le 80.\]
Here we use the fact that $\hat{M}_A=\sup_{k\ge 1}\norm{A^{kR}-A_\infty}_2\le \sqrt{\kappa_A}\beta_A^R\le \frac{1}{n\kappa_A}$.

For $\hat{\alpha}_6$, we have
\[(nL\hat{\alpha}_6)^{-1}=\left(72\hat{s}_A^2(9n\hat{s}_A^2\hat{\theta}_A^2+\frac{72n^2\hat{\kappa}_A^2\hat{\beta}_A^2}{(1-\hat{\beta}_A^2)^4}) \right)^{\frac{1}{3}}\le 36\left(n\hat{s}_A^4{\kappa}_A^2+n^2\kappa_A^2\hat{\beta}_A^2\right)^{\frac{1}{3}}\le 72.   \]

For $\hat{\alpha}_7$, we have

\[(nL\hat{\alpha}_6)^{-1}=\hat{s}_A\sqrt{(1+2\hat{\theta}_A^2\kappa_A\hat{\beta}_A^2)\frac{8\kappa_A^2\hat{\beta}_A^2}{n(1-\hat{\beta}_A^2)^3}}\le 14\hat{s}_A\kappa_A\hat{\beta}_A\le 14.\]

By combining these inequalities, we demonstrate that $\hat{C}_A\le 300$ is an absolute constant. Finally, notice that $r$ is any constant larger than $1+3\ln(\kappa_A)+3\ln(n)$, therefore, by choosing $R=\lceil\frac{1+3\ln(\kappa_A)+3\ln(n)}{1-\beta_A}\rceil$ in \eqref{eq: mg-1}, we have 

\begin{align}\label{eq: mg-2}
    \frac{1}{T+1}\sum_{t=0}^T\EE[\norm{\nabla f(\bm{w}^{(k)})}^2]\lesssim \frac{\sigma\sqrt{L\Delta}}{\sqrt{nK}}+\frac{L\Delta(1+\hat{C}_A)R}{K}\lesssim \frac{\sigma\sqrt{L\Delta}}{\sqrt{nK}}+\frac{L\Delta (1+\ln(\kappa_A)+\ln(n))}{(1-\beta_A)K}\nonumber
\end{align}

Finally, we discuss the range of $T$ which ensures $\sum_{t=0}^T d_{tR}\le 3n(T+1)$. Note that $d_0=\norm{\pi_A}^2\le 1$, and for any $t\ge 1$ we have
\begin{align*}
    d_{tR}=\norm{\pi_A^\top D_{tR}^{-1}}^2\le 2\norm{\one}^2+2\norm{\pi_A^\top D_{tR}^{-1}-\one^\top}^2&\le 2n+2\hat{\theta}_A^2\norm{\pi_A-\mathrm{diag}(A^tR)}^2\\
    &\le 2n+16n^2\kappa_A^2\cdot n\kappa_A\hat{\beta_A}\le 2n+16e^{-3}\le 3n.
\end{align*}
So we can cancel the requirement on the range of $T$ or $K$.

\hfill $\square$

\section{Experiment Details}\label{app:exp}
\hypertarget{ex}{}
In this section, we provide unexplained details of the experimental setup in the article.

\subsection{Network Design}\label{Network Design}
In this subsection, we present the networks used in our experiments.
\begin{figure}[H]
    \centering
    \begin{minipage}{0.3\linewidth}
        \centering
        \includegraphics[width=\linewidth]{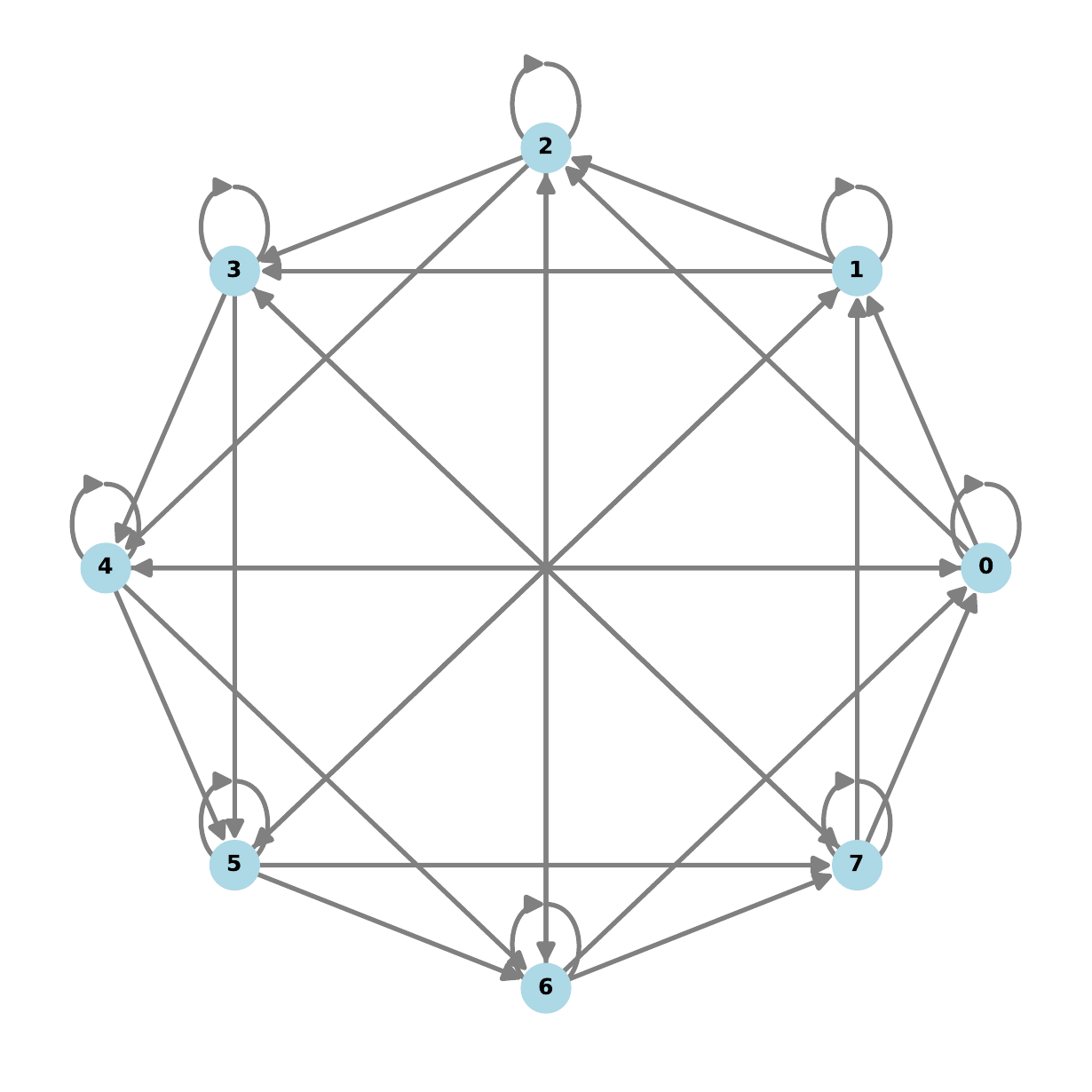}
    \end{minipage}
    \hspace{0.02\linewidth}
    \begin{minipage}{0.3\linewidth}
        \centering
        \includegraphics[width=\linewidth]{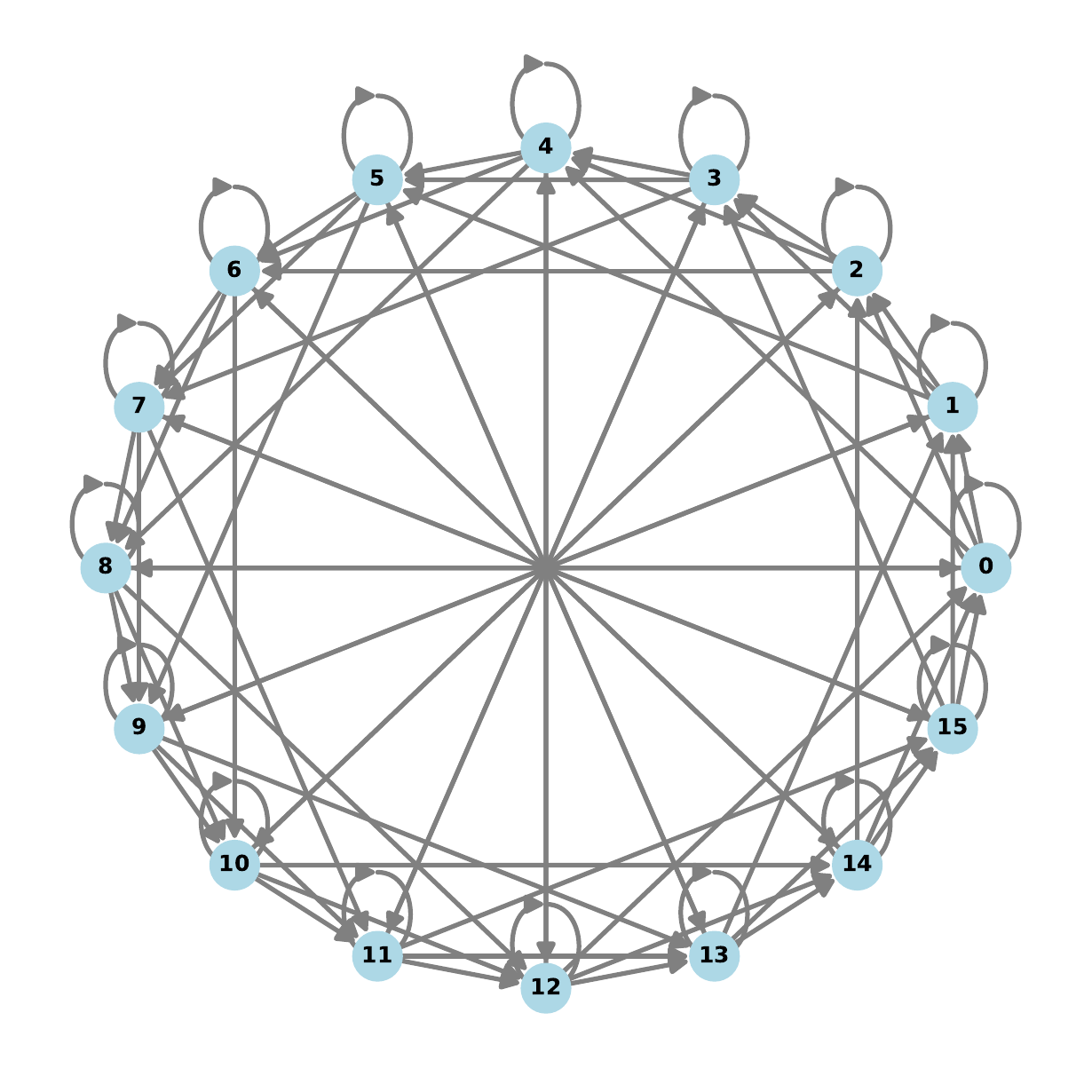}
    \end{minipage}
    \hspace{0.02\linewidth}
    \begin{minipage}{0.3\linewidth}
        \centering
        \includegraphics[width=\linewidth]{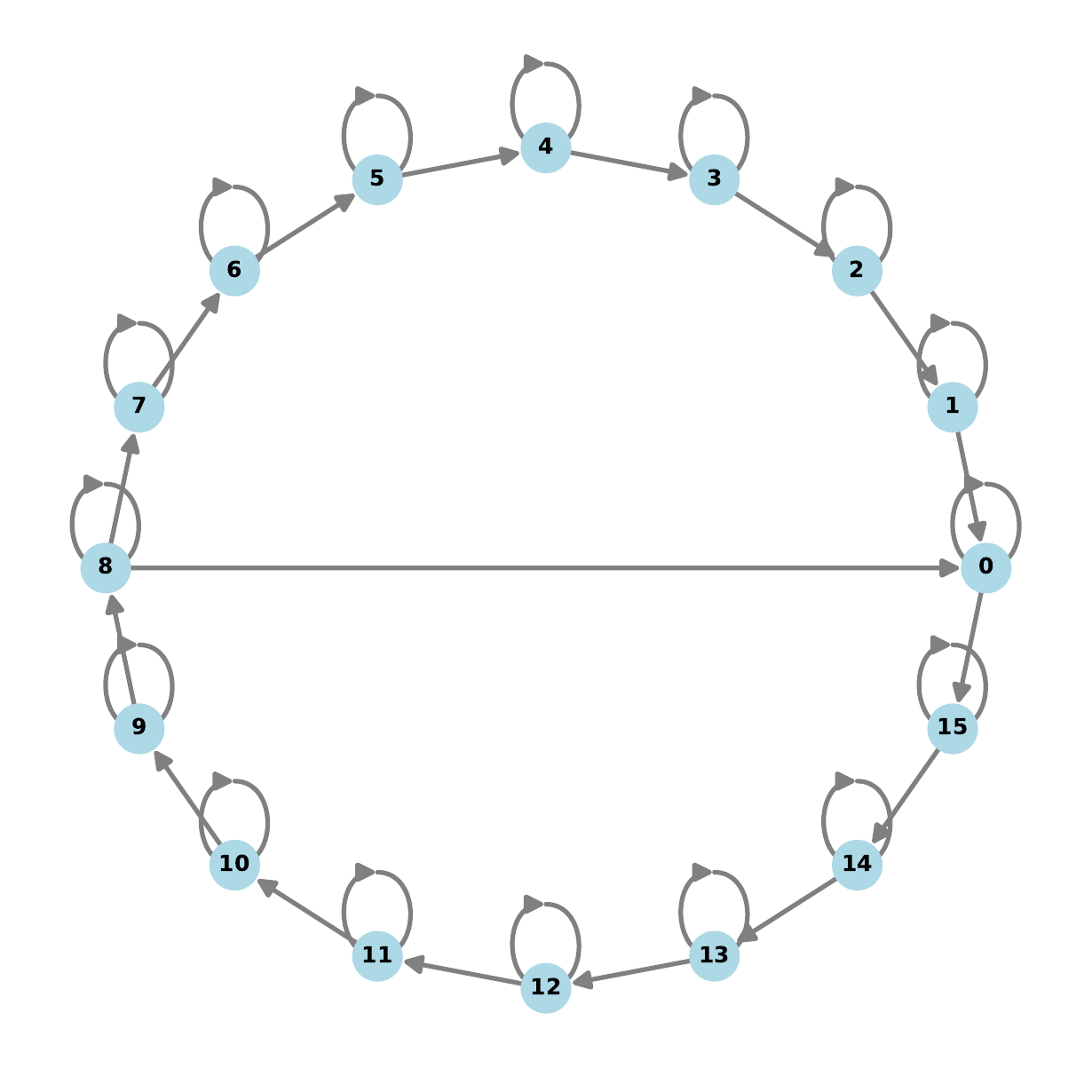}
    \end{minipage}
    
    \vspace{0.02\linewidth}
    
    \begin{minipage}{0.3\linewidth}
        \centering
        \includegraphics[width=\linewidth]{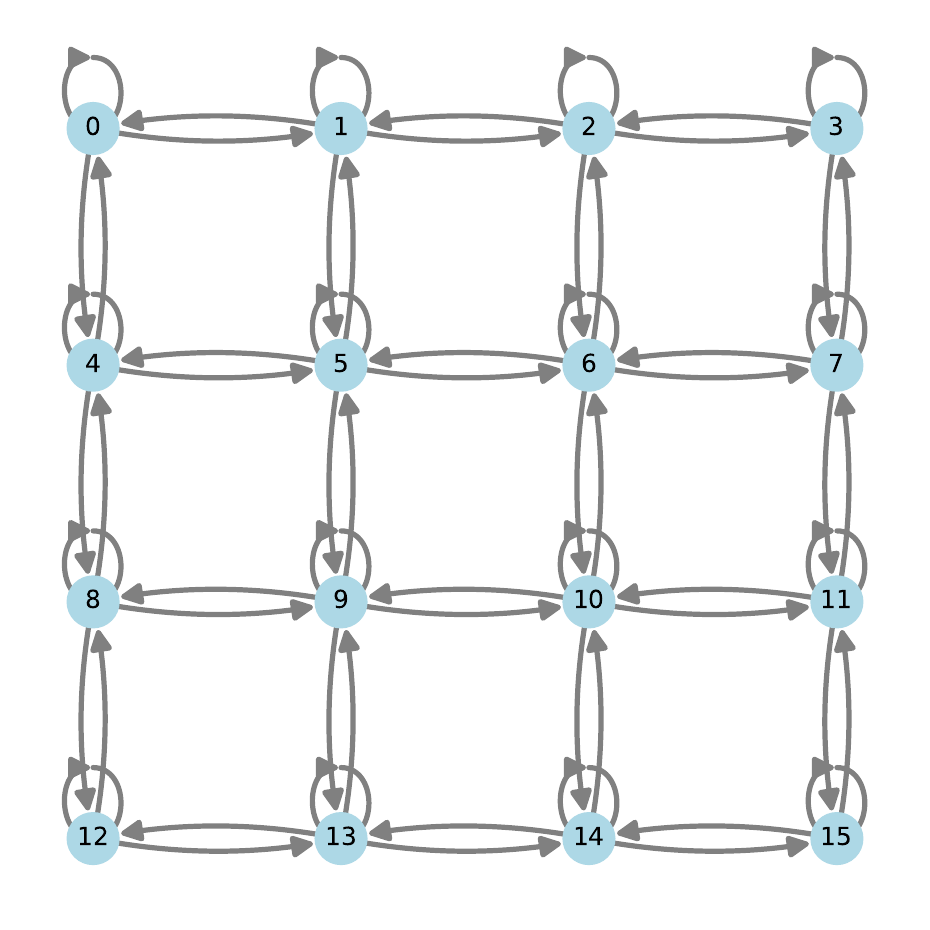}
    \end{minipage}
    \hspace{0.02\linewidth}
    \begin{minipage}{0.3\linewidth}
        \centering
        \includegraphics[width=\linewidth]{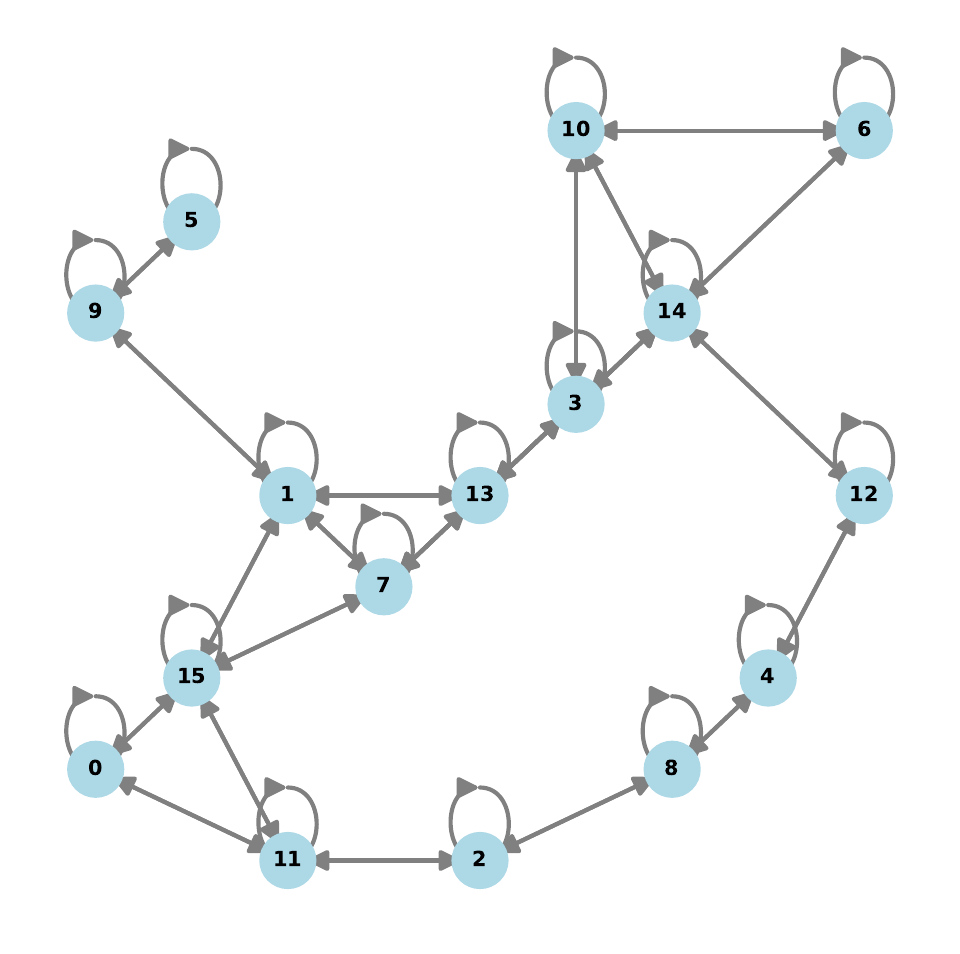}
    \end{minipage}
    \hspace{0.02\linewidth}
    \begin{minipage}{0.3\linewidth}
        \centering
        \includegraphics[width=\linewidth]{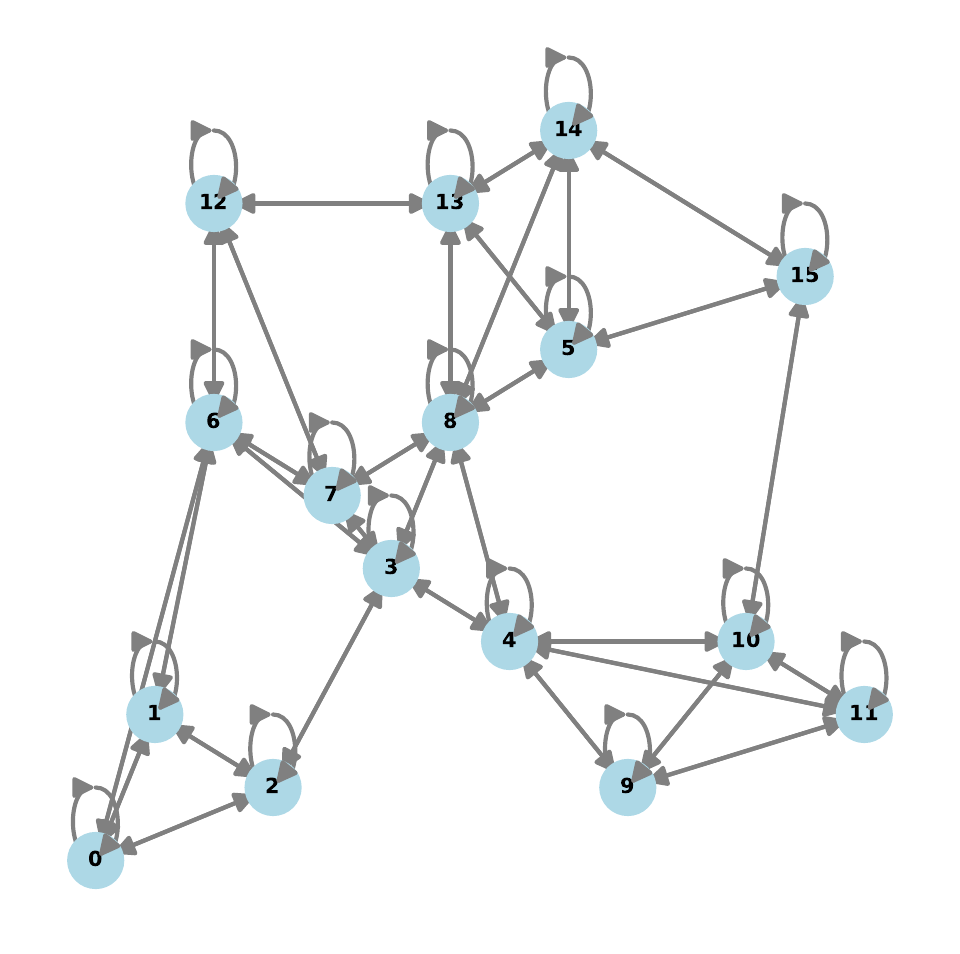}
    \end{minipage}

    \caption{Directed exponential graphs with 8 and 16 nodes, and a directed ring graph (top three); an undirected grid graph, an undirected geometric graph, and an undirected nearest neighbor graph (bottom three).}
    \label{network_graphs}
\end{figure}
Here we list the network metrics of the graphs above:
\begin{itemize}
    \item Directed exponential graph with $8$ nodes: $\beta_A=0.5,\; \kappa_A=1$.
    \item Directed exponential graph with $16$ nodes: $\beta_A = 0.6, \;\kappa_A =1$.
    \item Directed ring graph with $16$ nodes: $\beta_A = 0.924, \;\kappa_A =2$.
    \item Undirected grid graph with $16$ nodes: $\beta_A = 0.887, \;\kappa_A =6.25$.
    \item Undirected geometric graph with $16$ nodes: $\beta_A = 0.916, \;\kappa_A =3$.
    \item Undirected nearest neighbor graph with $16$ nodes: $\beta_A = 0.907, \;\kappa_A =1.74$.
\end{itemize}

\subsection{Synthetic Data Experiment}\label{synthetic data}
We conduct numerical experiments on a synthetic decentralized learning problem with non-convex regularization. The optimization function is given by:
\begin{align*}
    f(x)=\frac{1}{n}\sum_{i=1}^n f_i(x),\; x\in \mathbb{R}^{d}
\end{align*}
where
\begin{equation}
    f_{i,L_i}(x)= \frac{1}{L_i}\sum_{l=1}^{L_i} \ln\left(1+\exp(-y_{i,l}h_{i,l}^\top x)\right) + \rho\sum_{j=1}^d\frac{[x]_j^2}{1+[x]_j^2}, \;L_i\equiv M=L_{\text{Local}}/n \nonumber
\end{equation}

\textbf{Data Synthesis Process:}
\begin{enumerate}
    \item \textbf{Global Data Generation:}
    \begin{itemize}
        \item Generate optimal parameters: $x_{\text{opt}} \sim \mathcal{N}(0,I_d)$
        \item Create global feature matrix $H \in \mathbb{R}^{L_{\text{total}}\times d}$ with $h_{l,j} \sim \mathcal{N}(0,1)$
        \item Compute labels $Y\in \mathbb{R}^{L_{\text{total}}}$ with $y_l \in \{-1,+1\}$ via randomized thresholding:
        \begin{equation}
            y_l = \begin{cases}
                1 & \text{if } 1/z_l > 1 + \exp(-h_l^\top x_{\text{opt}}) \\
                -1 & \text{otherwise}
            \end{cases},\quad z_l \sim \mathcal{U}(0,1)\nonumber
        \end{equation}
    \end{itemize}
    
    \item \textbf{Data Distribution:}
    \begin{itemize}
        \item Partition $H$ and $L$ across $n$ nodes using:
        \begin{equation}
            H^{(i)} = H[iM:(i+1)M,\;:\;],\;\quad Y^{(i)} = Y[iM:(i+1)M],\;\quad M = L_{\text{total}}/n\nonumber
        \end{equation}
        where $L_{\text{total}}$ must be divisible by $n$
    \end{itemize}
    
    \item \textbf{Local Initial Points:}
    \begin{equation}
        x_i^\star = x_{\text{opt}} + \varepsilon_i,\quad \varepsilon_i \sim \mathcal{N}(0,\sigma_h^2I_d),\quad \sigma_h=10\nonumber
    \end{equation}
\end{enumerate}

\textbf{Gradient Computation:}
At each iteration, each node $i$ independently computes its stochastic gradient by randomly sampling a minibatch of size $B$ from its local dataset of size $L_i = L_{\text{total}}/n$. The gradient computation consists of two components:

\begin{equation}
    \nabla f_{i,B}(x) = \underbrace{-\frac{1}{B}\sum_{b=1}^B \frac{y_{i,b}h_{i,b}}{1+\exp(y_{i,b}h_{i,b}^\top x)}}_{\text{Logistic Loss Gradient}} + \underbrace{\rho\sum_{j=1}^d \frac{2[x]_j}{(1+[x]_j^2)^2}}_{\text{Non-convex Regularization}}
\end{equation}

where the minibatch $\{h_{i,b}, y_{i,b}\}_{b=1}^B$ is drawn uniformly from the $L_i$ local samples without replacement. Notice that the gradient on each node $i$ is computed on the local parameter $x_i \in \mathbb{R}^{d}$. 

\textbf{Implementation Details:}
\begin{itemize}
    \item Global dataset size $L_{\text{total}}=204800$, batch size $B = 200$
    \item Dimension $d=10$, regularization $\rho=0.01$
    \item Node configurations: $n \in \{1,2,8,16,128,512\}$
\end{itemize}
Notice that $L_{\text{total}}=204800=512\cdot200\cdot2=n_{\max}\cdot B \cdot 2$.

\textbf{Evaluation Metric:} We track $\|n^{-1}\one^\top \nabla f(\vx^{(k)})\|$ as gradient norm, where
\begin{align*}
    n^{-1}\one^\top \nabla f(\vx^{(k)}) = \frac{1}{n}\sum_{i=1}^n \nabla f_{i,M}(\bm{x}_i^{(k)}), \;M=L_{\text{Total}}/n.
\end{align*}
The experimental results shown in our plots represent the average performance across $20$ independent repetitions, where each trial is conducted with the fixed random $\text{seed}=42$ for reproducibility.

\subsection{Neural Network Experiment}\label{neural network}

Firstly, we employ a four-layer fully connected neural network for MNIST classification. The experiments are conducted on four distinct network topologies, each consisting of $16$ nodes: a directed ring graph, an undirected grid graph, a geometric graph, and a nearest neighbor graph, as illustrated in Figure~\ref{network_graphs}.

The MNIST training dataset, comprising 60,000 images, is evenly distributed across $n=16$ nodes, each node maintaining an independent instance of the neural network model. At the end of each batch (that is, communication round), we record key performance metrics, including loss and accuracy. The communication is then executed based on the predefined network topology.

The complete experiment results are shown in~\ref{fig(app):mg}. The accuracy cannot reach 100\% because we are using a constant learning rate and the network is very simple.
\begin{figure}[H]
    \centering
    \begin{minipage}{0.23\linewidth} % 每张图片占 23% 的宽度，留出一些间距
        \centering
        \includegraphics[width=\linewidth]{figures/Experiement/MG_RING_Loss_A_0127.pdf}
    \end{minipage}
    \hfill % 填充水平间距
    \begin{minipage}{0.23\linewidth}
        \centering
        \includegraphics[width=\linewidth]{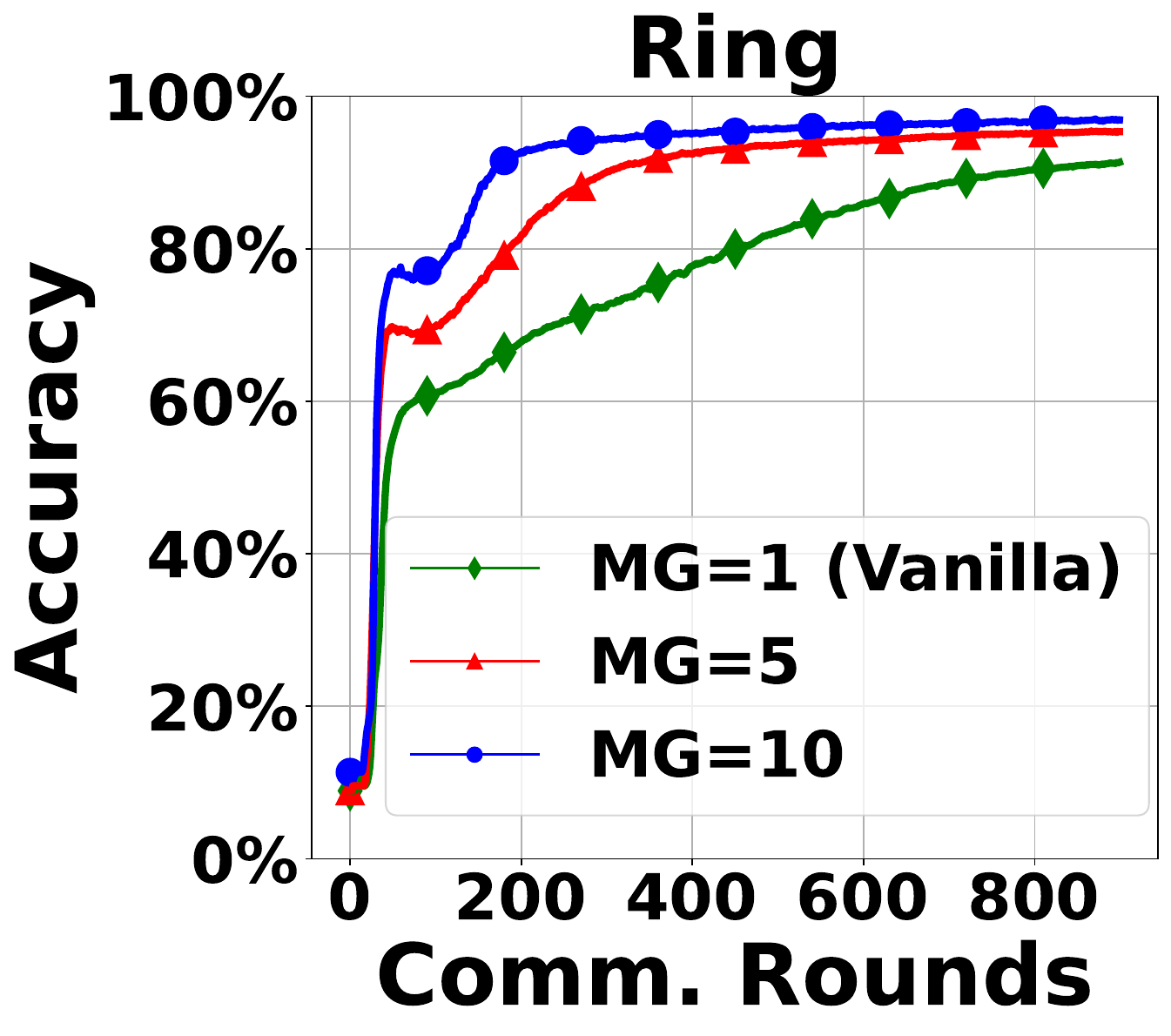}
    \end{minipage}
    \hfill
    \begin{minipage}{0.23\linewidth}
        \centering
        \includegraphics[width=\linewidth]{figures/Experiement/MG_GRID_Loss_A_0127.pdf}
    \end{minipage}
    \hfill
    \begin{minipage}{0.23\linewidth}
        \centering
        \includegraphics[width=\linewidth]{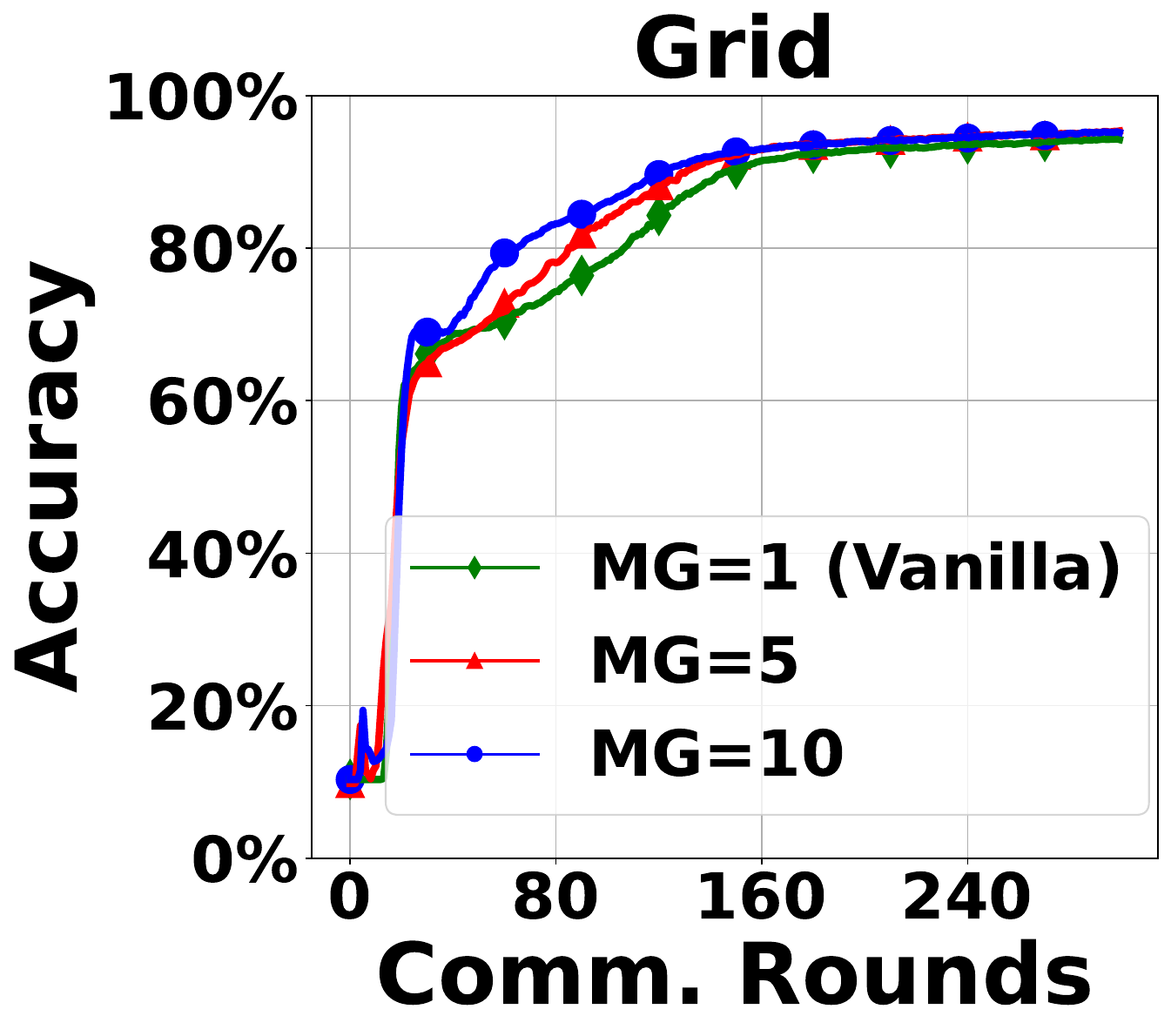}
    \end{minipage}
\vspace{10mm}
    \centering
    \begin{minipage}{0.23\linewidth} % 每张图片占 23% 的宽度，留出一些间距
        \centering
        \includegraphics[width=\linewidth]{figures/Experiement/MG_geograph_Loss_A_0127.pdf}
    \end{minipage}
    \hfill % 填充水平间距
    \begin{minipage}{0.23\linewidth}
        \centering
        \includegraphics[width=\linewidth]{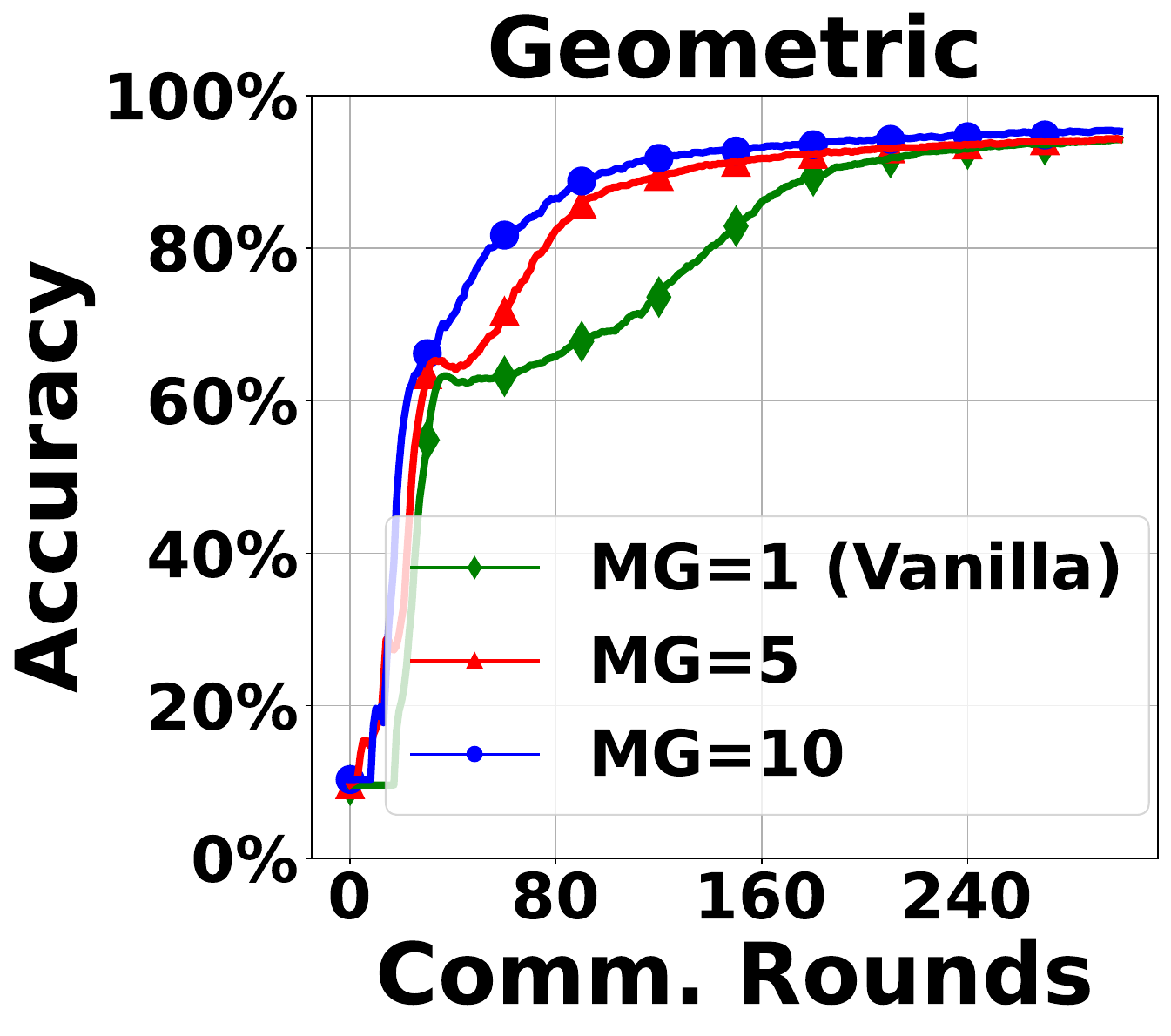}
    \end{minipage}
    \hfill
    \begin{minipage}{0.23\linewidth}
        \centering
        \includegraphics[width=\linewidth]{figures/Experiement/MG_neargraph_Loss_A_0127.pdf}
    \end{minipage}
    \hfill
    \begin{minipage}{0.23\linewidth}
        \centering
        \includegraphics[width=\linewidth]{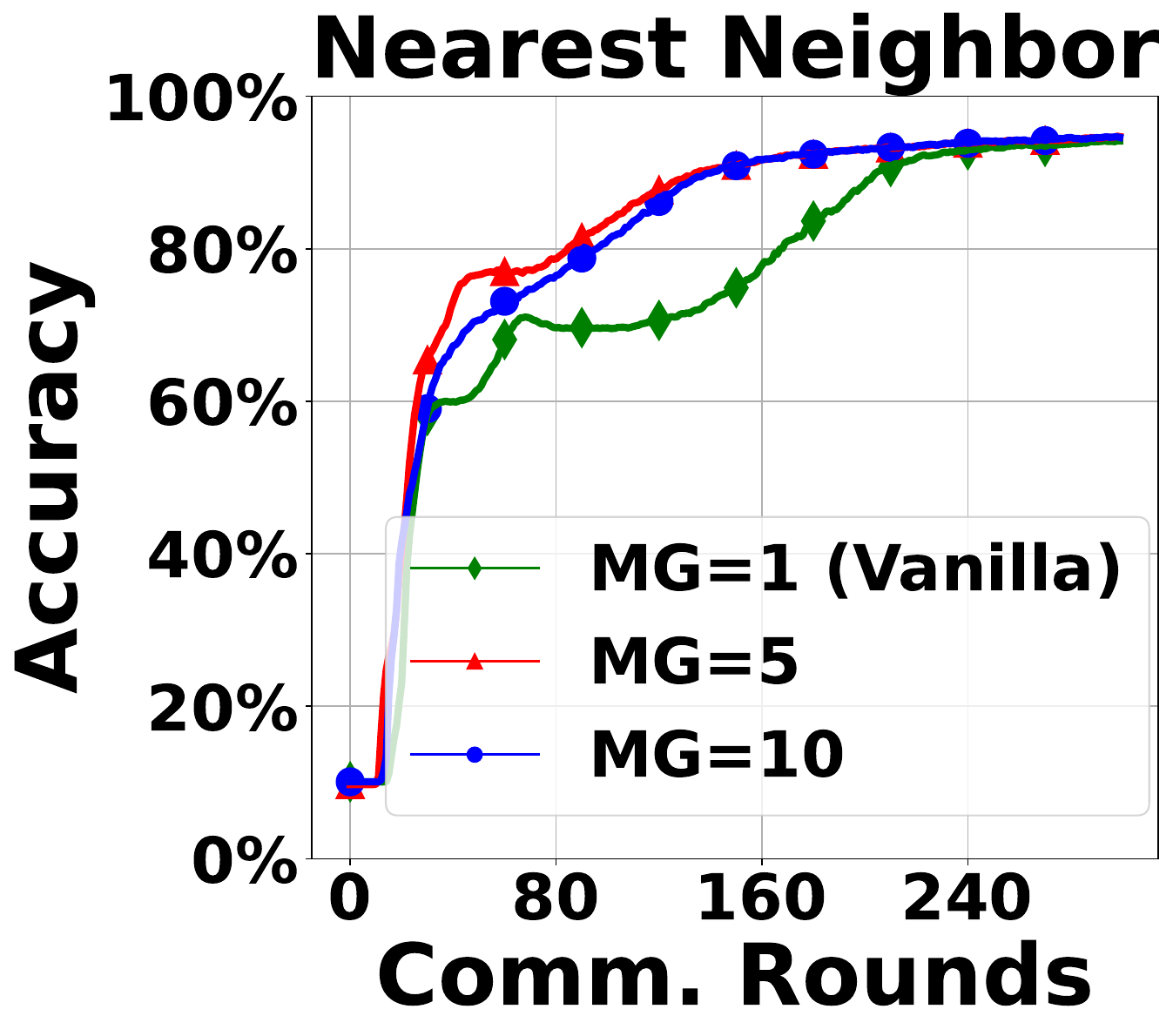}
    \end{minipage}
    \caption{Training loss and test accuracy for models trained by MG-\pulldiag-GT and \pulldiag-GT on MNIST dataset. MG-\pulldiag-GT outperforms on all topologies.}
    \label{fig(app):mg}
\end{figure}

Secondly, we employ a ResNet18 for CIFAR10 classification. The experiments are conducted on four distinct network topologies, each consisting of $16$ nodes: a directed ring graph, an undirected grid graph, a geometric graph, and a nearest neighbor graph, as illustrated in Figure~\ref{network_graphs}.

The experimental results are as follows:
\begin{figure}[H]
\centering
\begin{minipage}{0.23\linewidth}
        \centering
        \includegraphics[width=\linewidth]{figures/MG_CIFAR10/RingGraphMG_CIFAR10_Loss.pdf}
    \end{minipage}
    \hfill
    \begin{minipage}{0.23\linewidth}
        \centering
        \includegraphics[width=\linewidth]{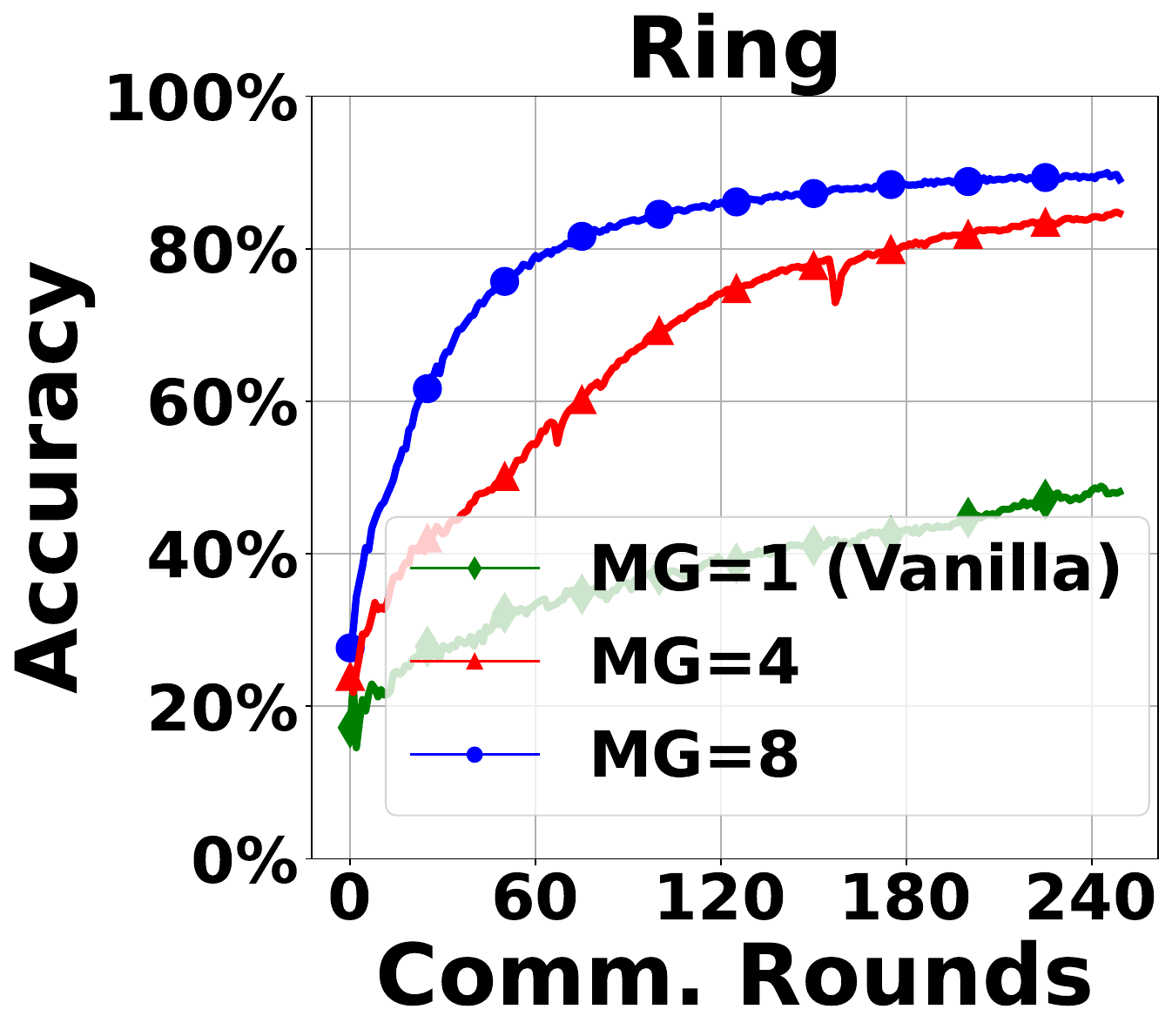}
    \end{minipage}
\vspace{10mm}
    \centering
    \begin{minipage}{0.23\linewidth}
        \centering
        \includegraphics[width=\linewidth]{figures/MG_CIFAR10/Grid_MG_CIFAR10_Loss.pdf}
    \end{minipage}
    \hfill
    \begin{minipage}{0.23\linewidth}
        \centering
        \includegraphics[width=\linewidth]{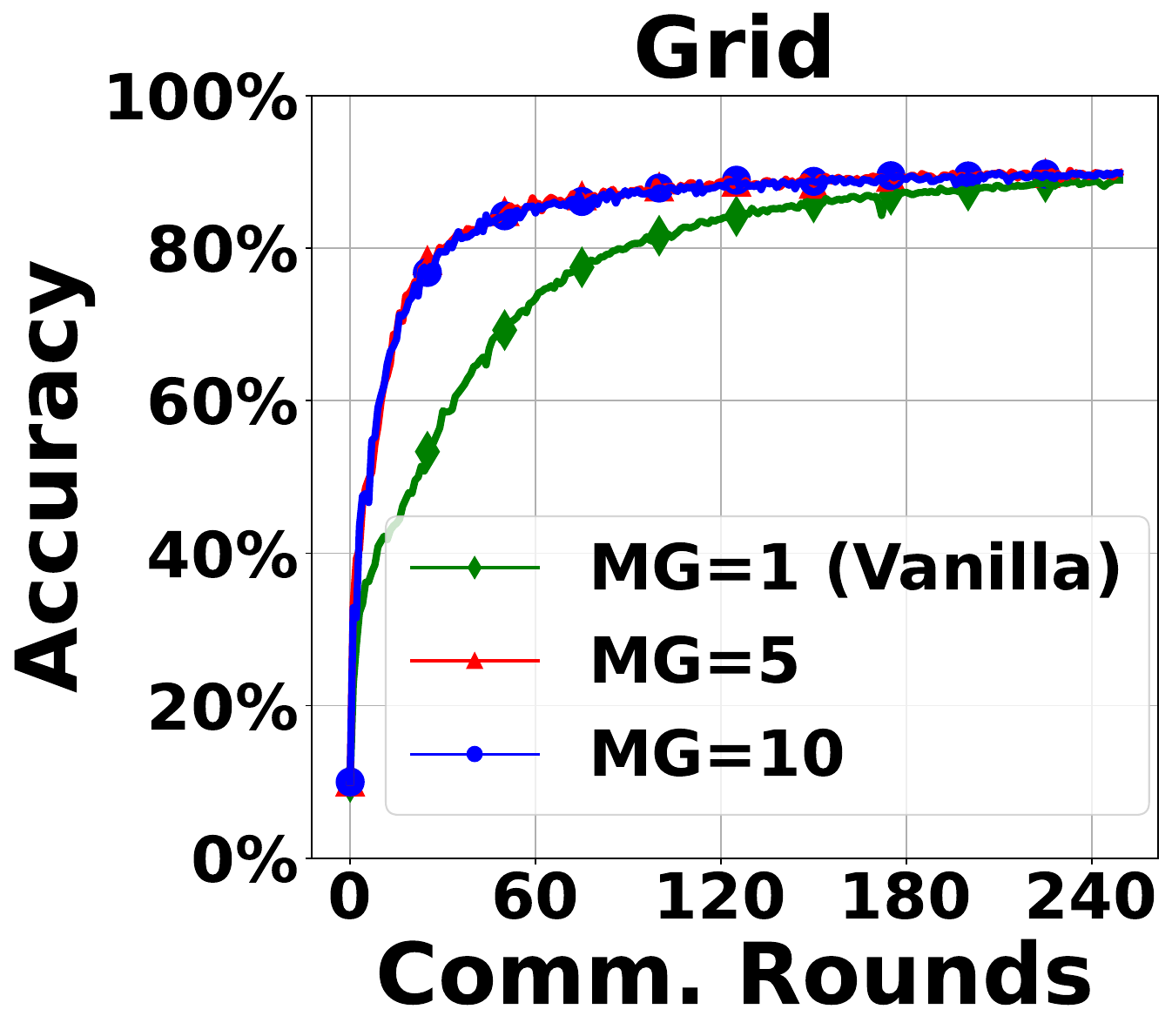}
    \end{minipage}
\vspace{10mm}
    \centering
    \begin{minipage}{0.23\linewidth} % 每张图片占 23% 的宽度，留出一些间距
        \centering
        \includegraphics[width=\linewidth]{figures/MG_CIFAR10/GeoGraphMG_CIFAR10_Loss.pdf}
    \end{minipage}
    \hfill % 填充水平间距
    \begin{minipage}{0.23\linewidth}
        \centering
        \includegraphics[width=\linewidth]{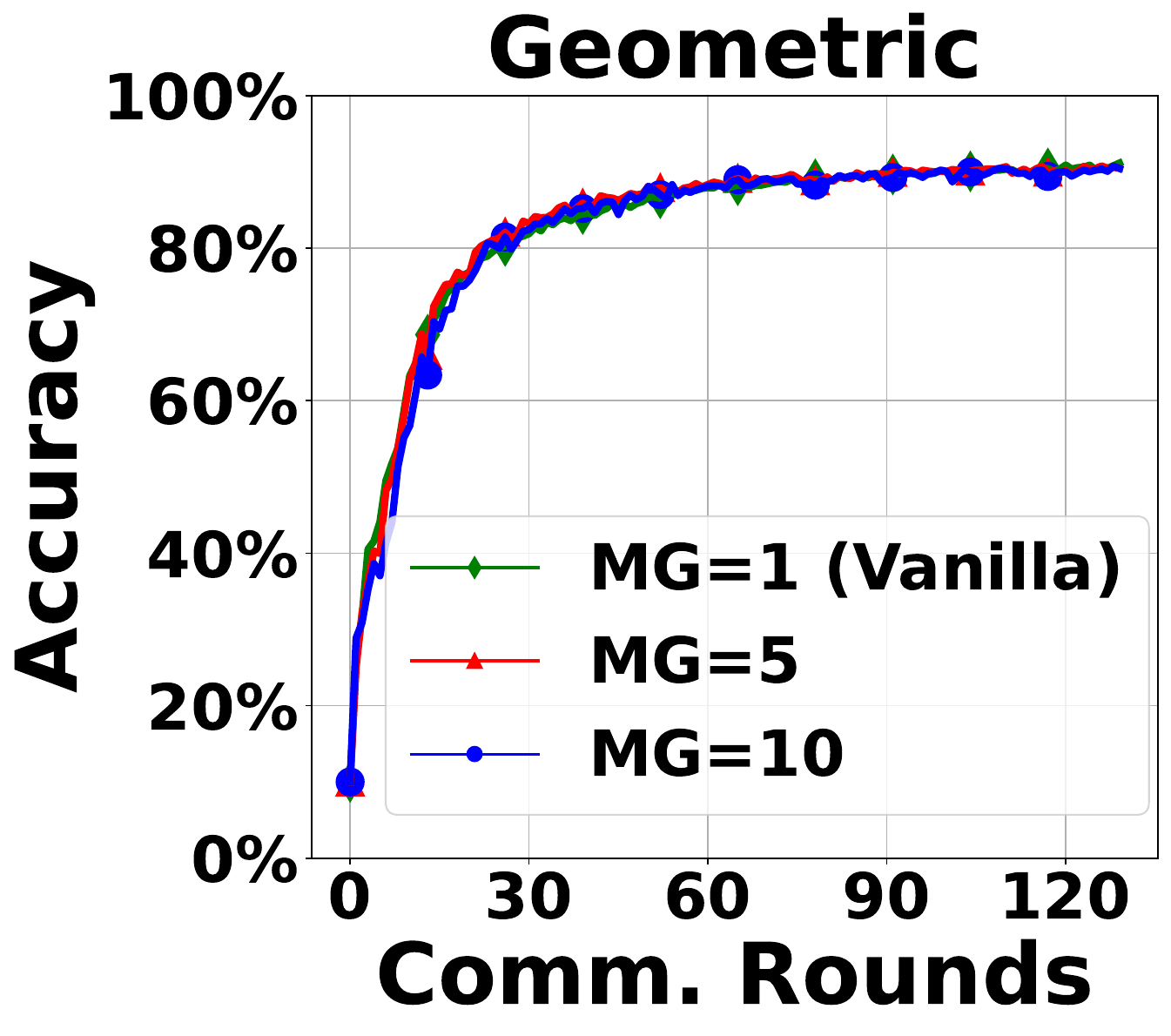}
    \end{minipage}
    \hfill
    \begin{minipage}{0.23\linewidth}
        \centering
        \includegraphics[width=\linewidth]{figures/MG_CIFAR10/NearGraphMG_CIFAR10_Loss.pdf}
    \end{minipage}
    \hfill
    \begin{minipage}{0.23\linewidth}
        \centering
        \includegraphics[width=\linewidth]{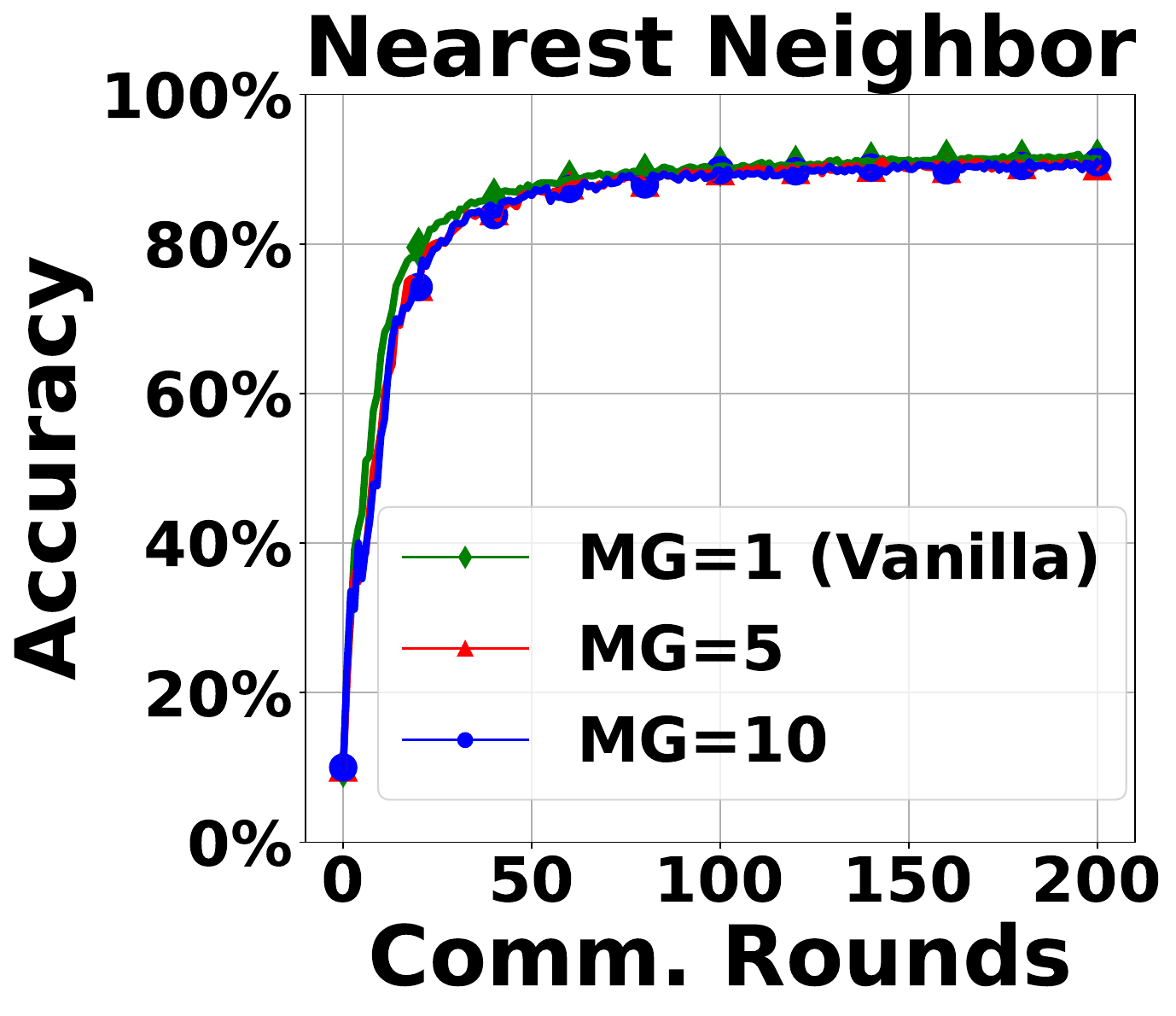}
    \end{minipage}
    \caption{Training loss and test accuracy for models trained by MG-\pulldiag-GT and \pulldiag-GT on CIFAR-10 dataset. MG-\pulldiag-GT outperforms on all topologies.}
    \label{fig(app):mg_cifar10}
\end{figure}

Lastly, we provide an experiment on the heterogeneous MNIST dataset where each node possesses a unique data distribution, see Figure~\ref{fig:heat map}.

\begin{figure}[h]
    \centering
\includegraphics[width=0.98\textwidth]{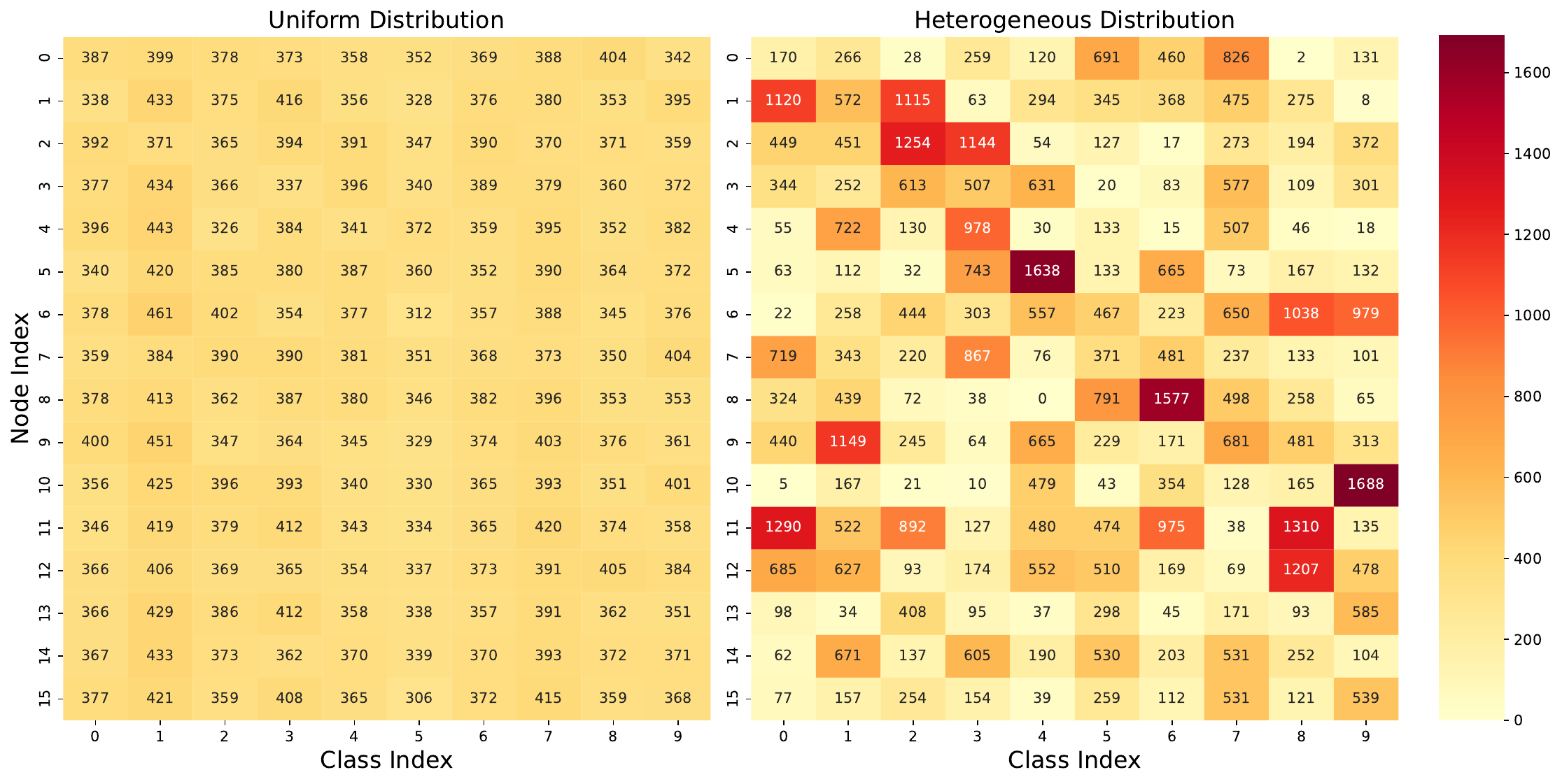} 
    \caption{Left: Uniform MNIST data distribution used in the experiments in Section 6. Right: Heterogeneous MNIST data distribution used in the additional experiment shown in Figure~\ref{fig:MNIST-hetero}.}
    \label{fig:heat map}
\end{figure}

\begin{figure}[htbp]
    \centering
    \begin{minipage}{0.23\linewidth} % 每张图片占 23% 的宽度，留出一些间距
        \centering
        \includegraphics[width=\linewidth]{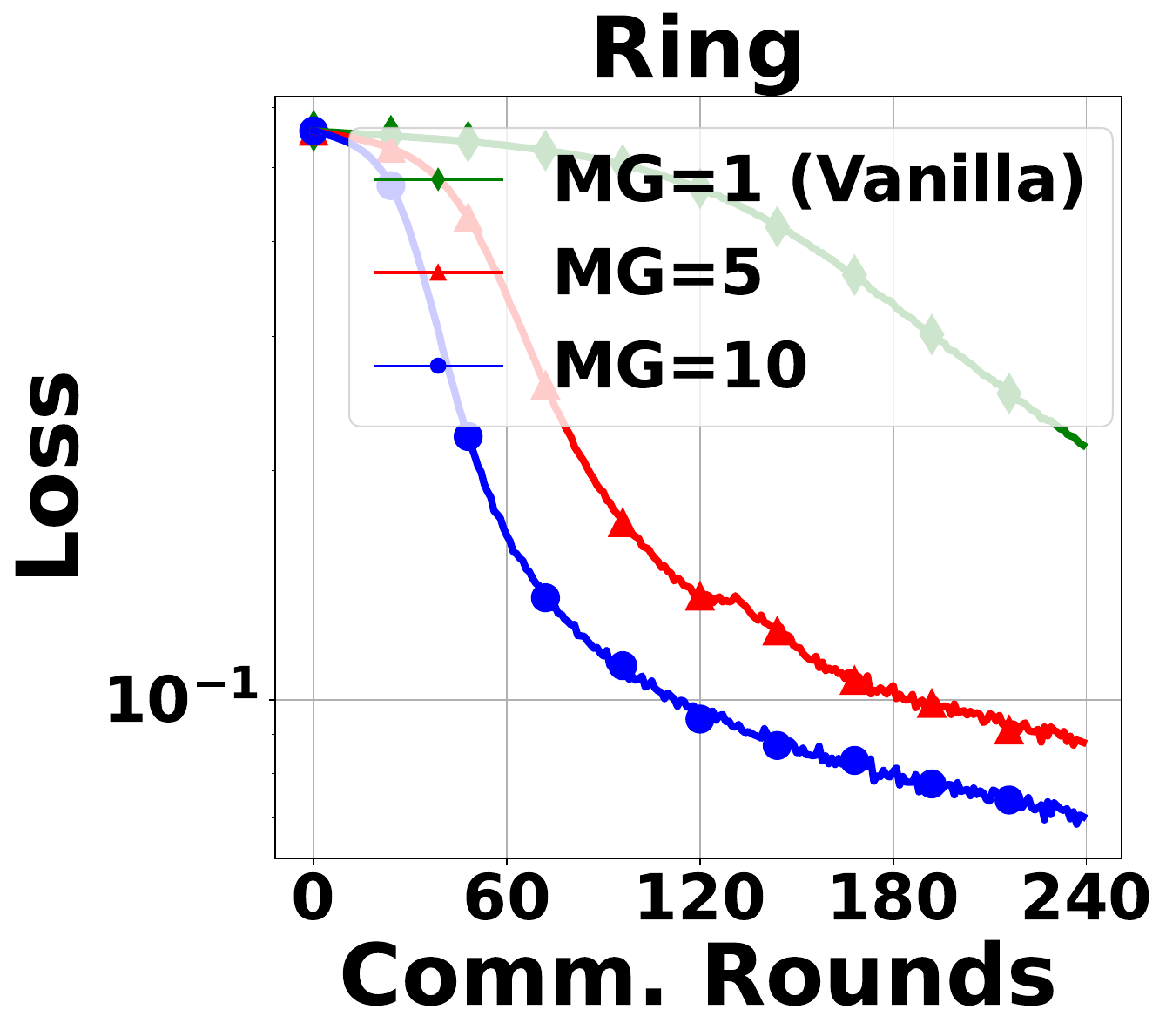}
    \end{minipage}
    \hfill % 填充水平间距
    \begin{minipage}{0.23\linewidth}
        \centering
        \includegraphics[width=\linewidth]{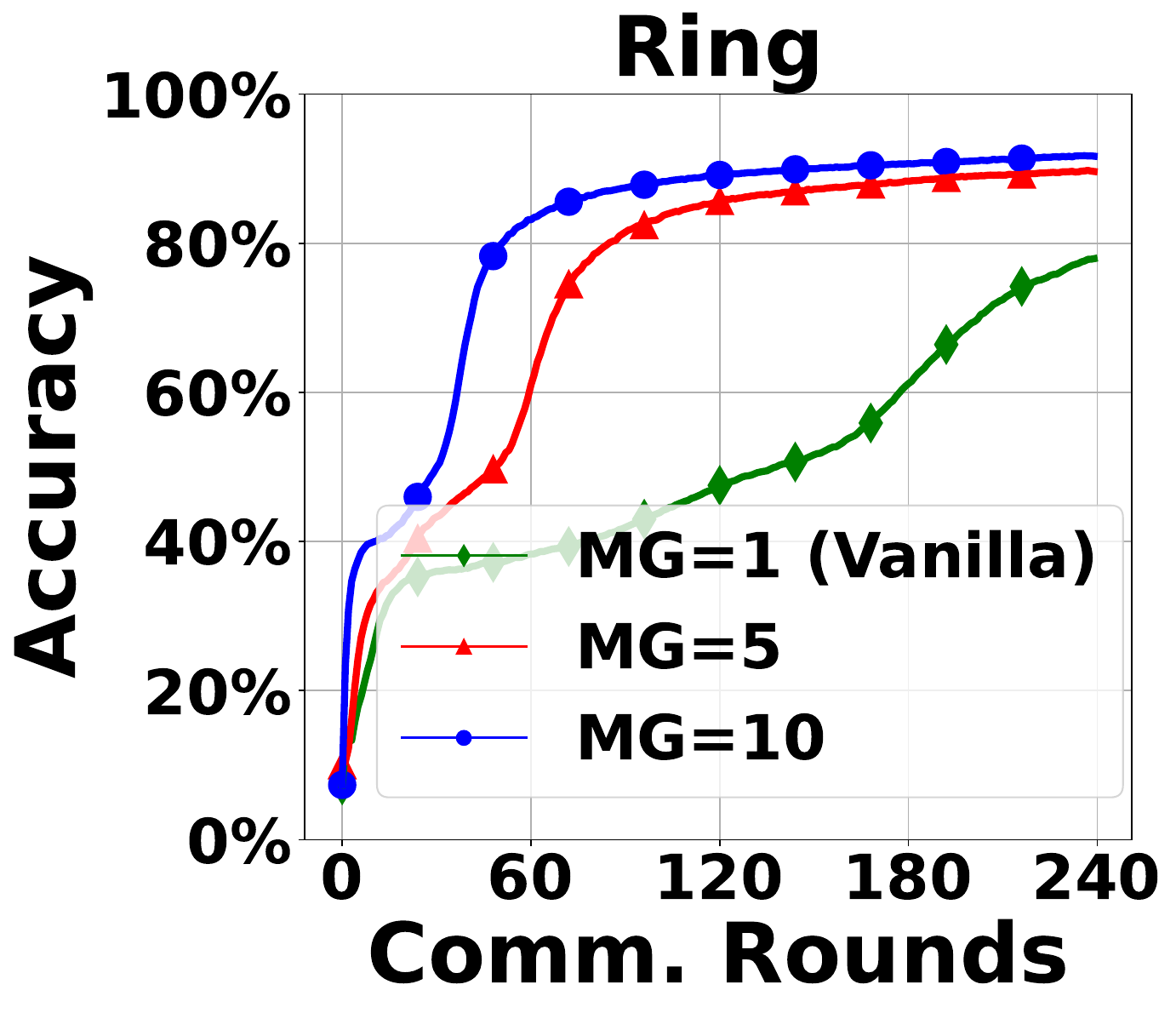}
    \end{minipage}
    \hfill
    \begin{minipage}{0.23\linewidth}
        \centering
        \includegraphics[width=\linewidth]{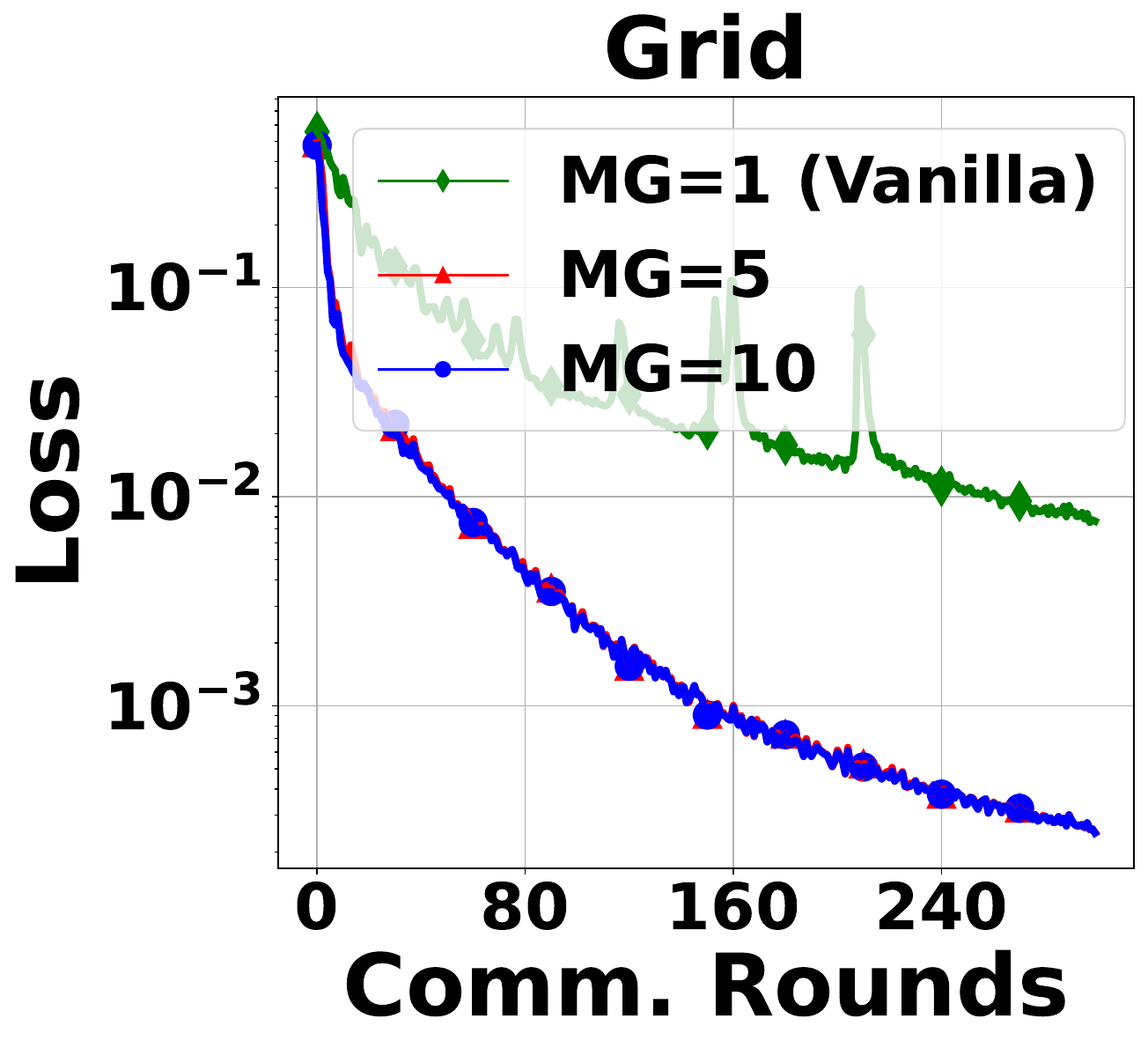}
    \end{minipage}
    \hfill
    \begin{minipage}{0.23\linewidth}
        \centering
        \includegraphics[width=\linewidth]{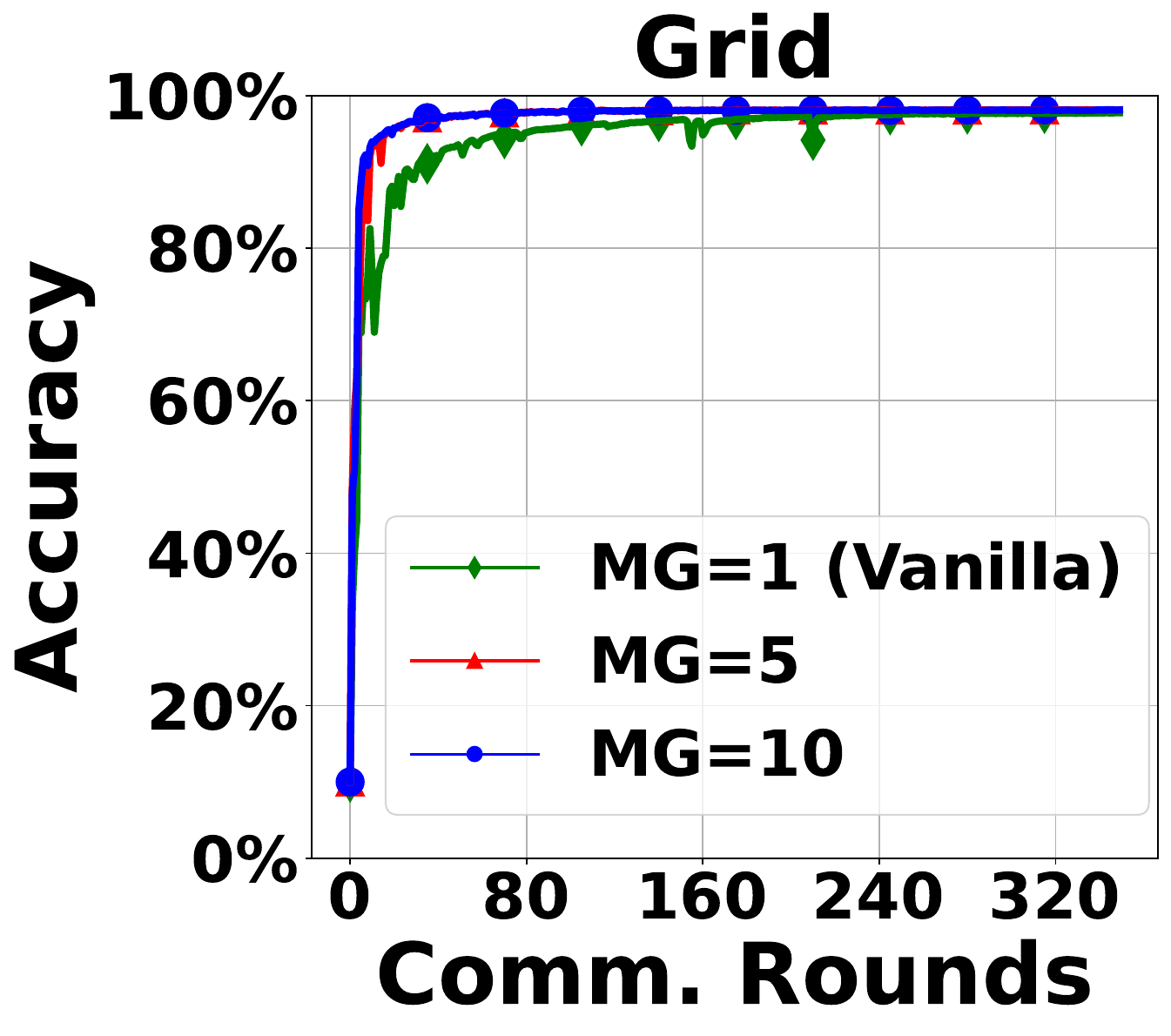}
    \end{minipage}
\vspace{10mm}
    \centering
    \begin{minipage}{0.23\linewidth} % 每张图片占 23% 的宽度，留出一些间距
        \centering
        \includegraphics[width=\linewidth]{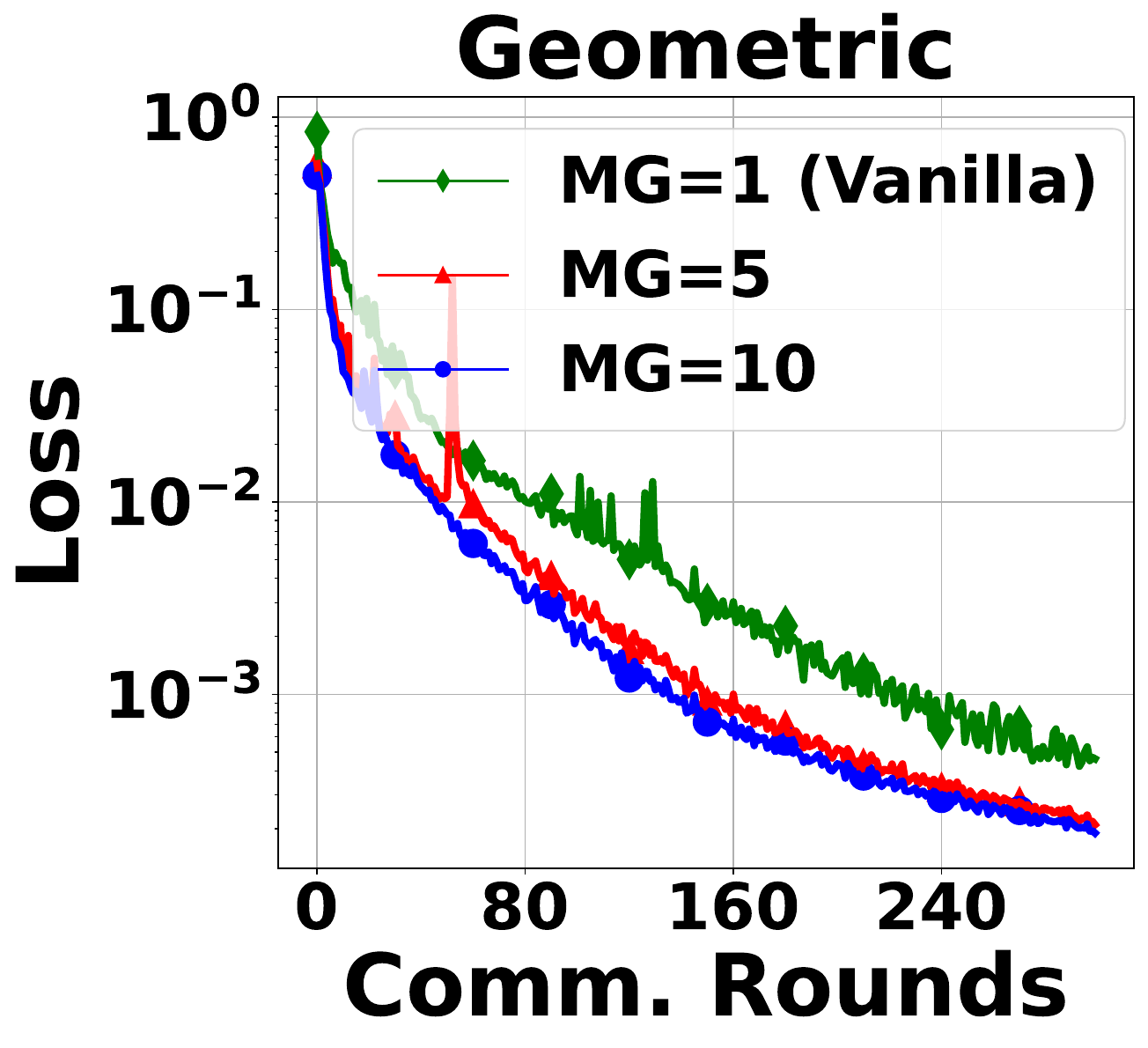}
    \end{minipage}
    \hfill % 填充水平间距
    \begin{minipage}{0.23\linewidth}
        \centering
        \includegraphics[width=\linewidth]{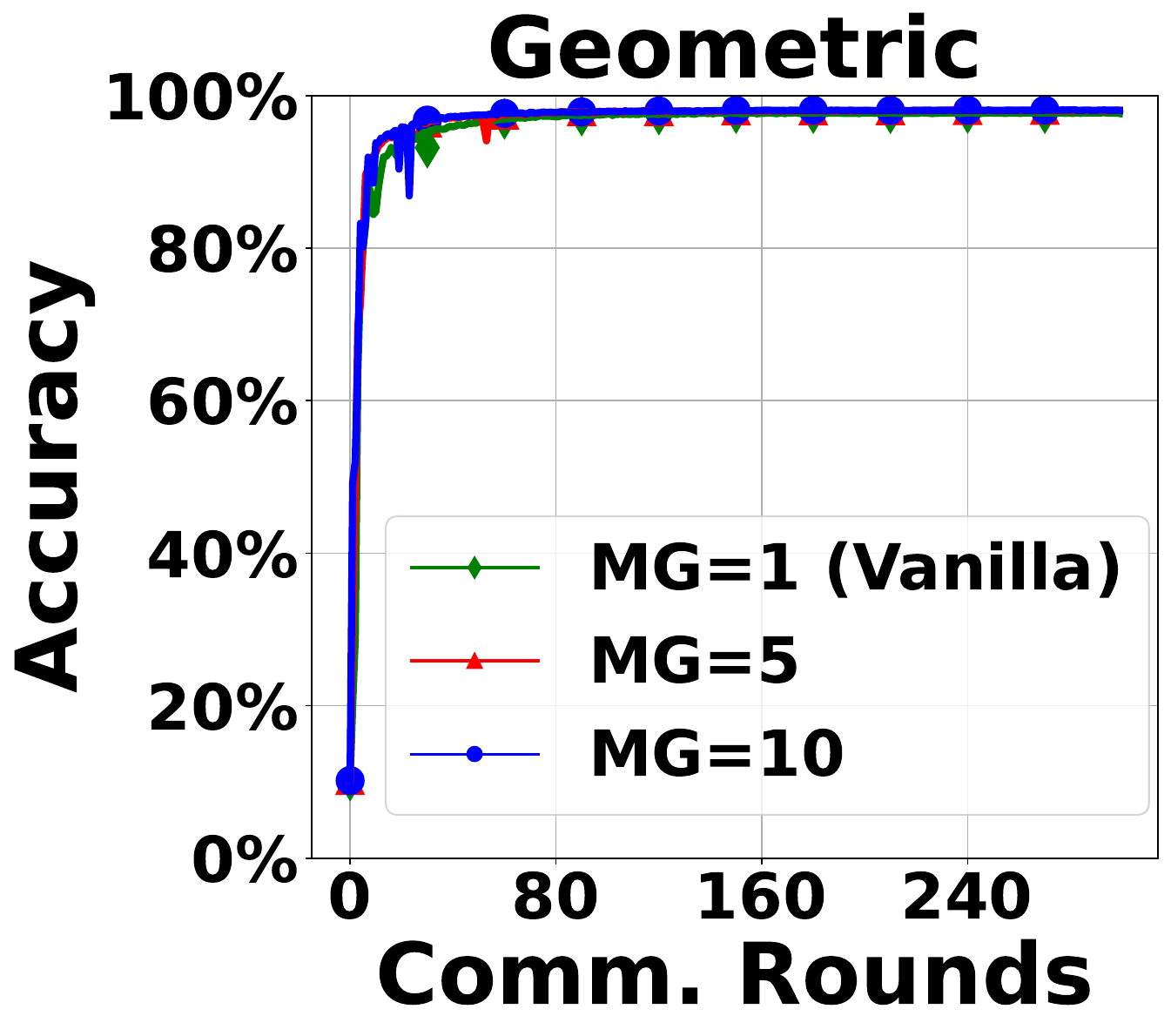}
    \end{minipage}
    \hfill
    \begin{minipage}{0.23\linewidth}
        \centering
        \includegraphics[width=\linewidth]{figures/MG_MNIST_high_hetero/HighHetroGeoGraph_MG_MNIST_Loss.pdf}
    \end{minipage}
    \hfill
    \begin{minipage}{0.23\linewidth}
        \centering
        \includegraphics[width=\linewidth]{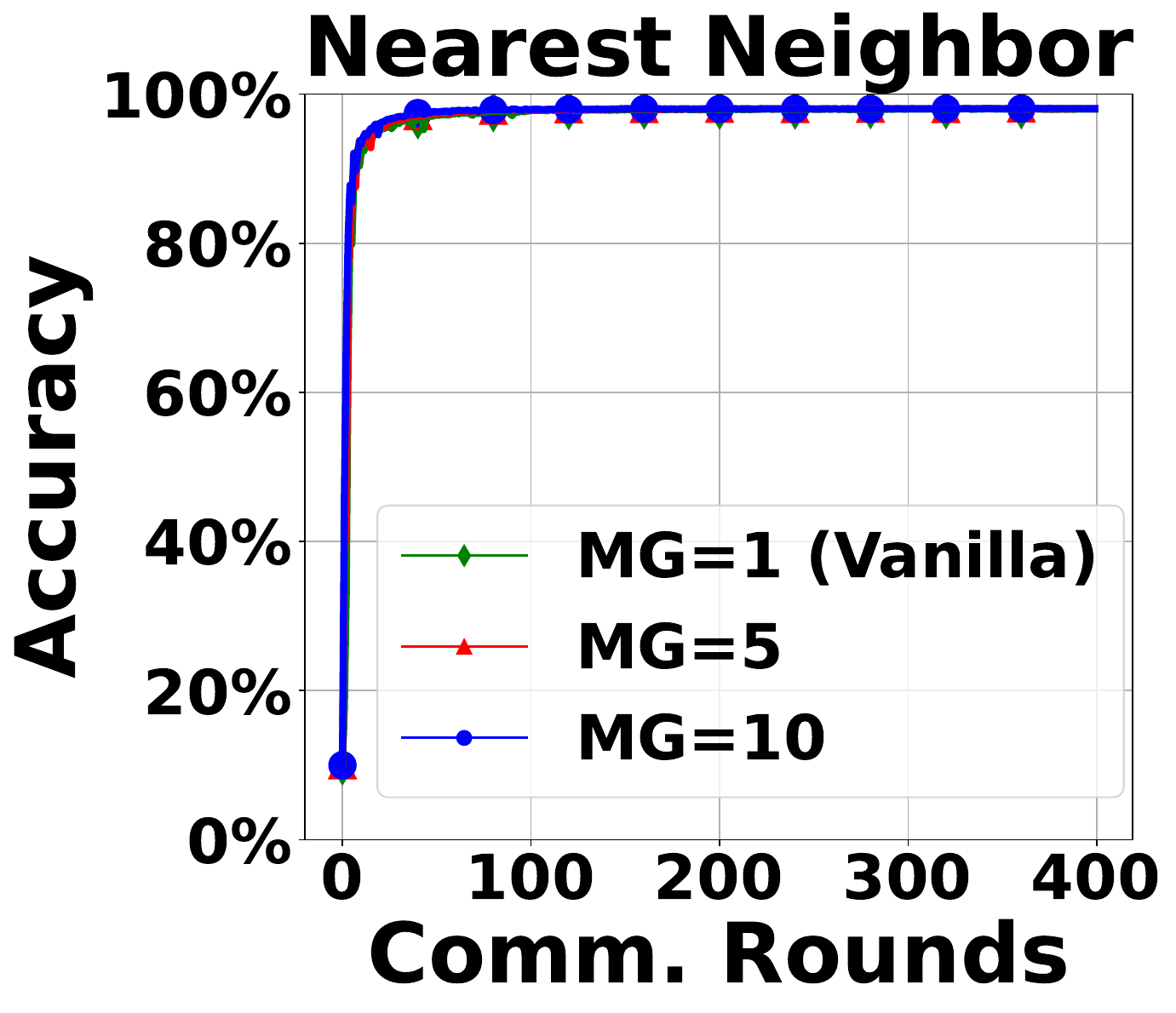}
    \end{minipage}
    \begin{minipage}{0.44\linewidth}
\centering
\includegraphics[width=\linewidth]{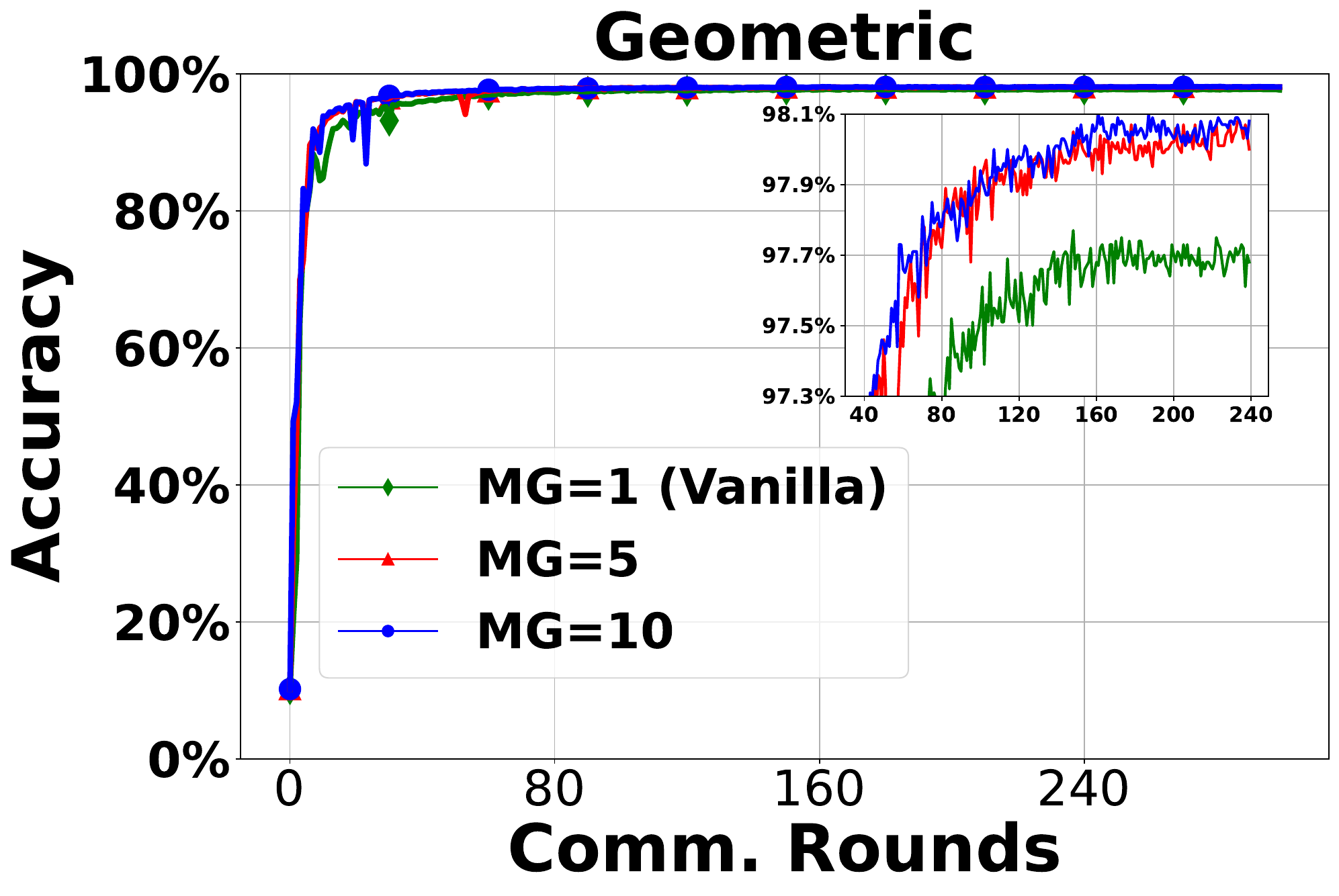} 
\end{minipage}
\hfill % 添加水平间距
\begin{minipage}{0.44\linewidth} % 右图占 45% 宽度
\centering
\includegraphics[width=\linewidth]{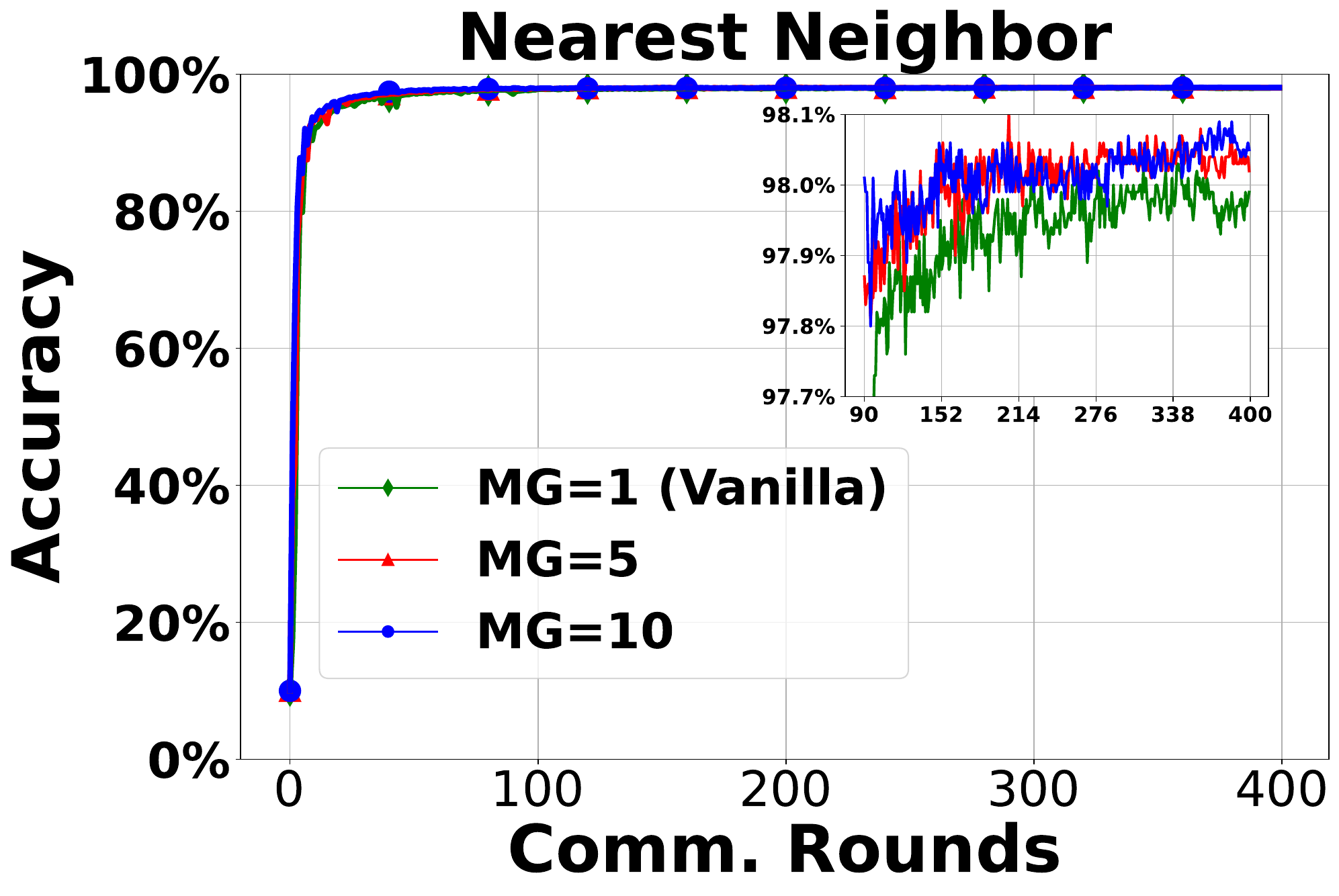}
\end{minipage}
        \caption{Training loss and test accuracy of a 4-layer neural network trained using MG-Pull-Diag-GT and vanilla Pull-Diag-GT on the MNIST dataset. The data is distributed in a heterogeneous manner (see Figure~\ref{fig:heat map}).}
        \label{fig:MNIST-hetero}
\end{figure}

\newpage
\subsection{Learning Rates}
In Figure~\ref{fig:linear speedup}:
\begin{itemize}
    \item Exponential graph of $n$
 nodes: 
 \[n\alpha_n=0.512.\]
 \item Ring graph of $n$ nodes:
 \[n\alpha=0.002\]

  \end{itemize}
In Figure~\ref{fig:MG} and \ref{fig:MG2}:
\begin{itemize}

\item For ring graph with $m$ times of multiple gossips,  the learning rate $\alpha_m$ satisfies: $$\alpha_1=0.005,\alpha_5=0.01,\alpha_{10}=0.02.$$. 
\item Grid graph: $$\alpha_1=0.02,\alpha_5=0.03,\alpha_{10}=0.03.$$
\item Geometric graph: $$\alpha_1=0.02,\alpha_5=0.02,\alpha_{10}=0.03.$$
\item Nearest neighbor graph: $$\alpha_1=0.02,\alpha_5=0.02,\alpha_{10}=0.02.$$

Here, the learning rates are chosen in a non-decreasing order, as the MG mechanism typically permits a wider range of stable learning rates.
\end{itemize}

\end{document}